\newtheorem{theorem}{Theorem}[section]
\newtheorem{lemma}[theorem]{Lemma}
\newtheorem{proposition}[theorem]{Proposition}
\newtheorem{remark}[theorem]{Remark}
\newtheorem{definition}[theorem]{Definition}
\newtheorem{ex}[theorem]{Example}
\newcommand{\norm}[1]{\left\|#1\right\|}
\title{Approximate Controllability of Fractional Hemivariational Inequalities in Banach Spaces}
\author{Bholanath Kumbhakar${}^{1}$, Deeksha${}^{1}$,  and Dwijendra Narain Pandey${}^{1,}\thanks{Corresponding author email: dwij@ma.iitr.ac.in}$\\	 \footnotesize{$^1$Department of Mathematics, Indian Institute of Technology Roorkee, Roorkee 247 667, India}}
\date{}
\colorlet{darkblue}{blue!50!black}
\let\originalleft\left
\let\originalright\right
\renewcommand{\left}{\mathopen{}\mathclose\bgroup\originalleft}
\renewcommand{\right}{\aftergroup\egroup\originalright}
\begin{document}
	\maketitle \setcounter{page}{1} \numberwithin{equation}{section}
	\newtheorem{assumption}{Assumption}
	\newtheorem{Pro}{Proposition}[section]
	\newtheorem{Ass}{Assumption}[section]
	\newtheorem{Def}{Definition}[section]
	\newtheorem{Rem}{Remark}[section]
	\newtheorem{app}{Appendix:}
	\newtheorem{ack}{Acknowledgement:}

	\begin{abstract}
		In this paper, we derive the approximate controllability of fractional evolution hemivariational inequalities in reflexive Banach spaces involving Caputo fractional derivatives. We first show that the original problem is connected with a fractional differential inclusion problem involving the Clarke subdifferential operator, and then we prove the approximate controllability of the original problem via the approximate controllability of the connected fractional differential inclusion problem. In the meantime, we face the issue of convexity in the multivalued fixed point map due to the nonlinear nature of the duality map in reflexive Banach spaces involved in the expression of the control. We resolve this convexity issue and deduce main result. We also justify the abstract finding of this paper through an example.\\
{\it Keyword: Approximate Controllability, Hemivariational Inequality, Fractional Differential Inclusion, Nonconvexity}

	\end{abstract}

\section{Introduction}
In 2008, \cite{MR2375486} introduced and systematically studied a new kind of coupled dynamical systems on finite-dimensional spaces, called differential variational inequalities (DVIs, for short), which are formulated as a combination of (partial) differential equations and time-dependent variational inequalities. It was
shown that DVIs can serve as a powerful and useful mathematical tool to model and solve a variety of problems in engineering areas, such as dynamic vehicle routing problems, electrical circuits with ideal diodes, Coulomb frictional problems for bodies in contact, economical dynamics, dynamic traffic networks, and so on.

On the other hand, hemivariational inequalities were introduced by \cite{MR896909, MR1385670} in early 1980s to study engineering problems involving nonsmooth, nonmonotone and possibly multivalued constitutive relations and boundary conditions for deformable bodies. Since multivalued and nonmonotone constitutive laws appear often in applications, recently, \cite{MR3871422} introduced the
new notion of a differential hemivariational inequality (DHVI, for short). DHVI
is a valuable and efficient mathematical modeling tool to explore the nonsmooth
contact problems in mechanics, semipermeability problems, abnormal diffusion phenomena, etc. Hemivariational inequalities prove to be valuable in addressing mathematical challenges posed by problems involving multivalued and nonmonotone constitutive laws and boundary conditions. These multivalued relationships are derived from nonsmooth and nonconvex superpotentials, employing the generalized gradient of Clarke. In situations where convex superpotentials are at play, hemivariational inequalities simplify to variational inequalities. For a comprehensive understanding of the origins of hemivariational inequalities and their mathematical theory, readers are encouraged to consult \cite{ MR896909, MR1385670,MR1304257,MR4375652} and the associated references.

In last decades the study of evolution equations of fractional order, this is, evolution equations where the derivative of arbitrary order is used instead of a derivative of
positive integer order, has received great attention from researchers. This increasing interest is motivated both by important applications of the theory, and by considerations
of a mathematical nature. Indeed, many phenomena arising from several scientific fields
including analysis of viscoelastic materials, heat conduction in materials with memory,
electrodynamics with memory, signal processing, control theory, nonlinear dynamics and
stochastic processes, are conveniently described by fractional evolution equations. In addition, many research works in different applied sciences have demonstrated that models
involving fractional derivatives are more accurate to represent some natural phenomena
than models involving classical derivatives. We refer the reader to the monographs and
papers \cite{MR2962045}, \cite{MR747787}, \cite{MR3146657}, \cite{MR1890104} which contain many applications.

The underlying idea that motivated this article is that control theory is unquestionably one of the most interdisciplinary research areas as most modern applications involve control theory. Control theory, on the other hand, has been a discipline in which many mathematical ideas and methods have melted to produce a new body of important mathematics. As a result, it is now a rich intersection of engineering and mathematics. For a  comprehensive review of control theory and applications, one may refer \cite{MR516812} and citations given therein.

Within controllability, concepts like exact, null, and approximate controllability, the later holds particular more importance in practical scenarios. In real-world applications, achieving approximate controllability in abstract semilinear systems, especially in population dynamics, often takes precedence over exact controllability. Here we want to mention two works \cite{MR1943766,MR4577651} which contribute significantly to this area. In \cite{MR1943766} the author explores approximate controllability in a semilinear control system derived from a linear age-dependent and spatially structured population model. In \cite{MR4577651} author establishes approximate controllability results for a semilinear population model involving diffusion and a nonlocal birth process. For further insights, interested readers are encouraged to delve into the content of the book \cite{MR1797596}.

The purpose of this paper is to provide some suitable sufficient conditions for the approximate controllability of the following fractional evolution hemivariational inequality
\begin{equation}\label{HV1}
	\begin{cases}
 	\langle -{}^C D_t^\alpha q(t) +Aq(t)+Bu(t), v^*\rangle_{X,X^*} +F^0(t, q(t); H^*v^*)\ge 0, \forall v^*\in X^*, t\in I=[0,a]\\
		q(0)=x_0.
	\end{cases}
\end{equation}

 Here $\langle \cdot, \cdot \rangle_{X, X^*}$ denotes the duality pairing between the Banach spaces $X$ and its dual space $X^*$. The notation ${}^C D_t^\alpha$ stands for the Caputo fractional derivative of order $\alpha$ with $\frac{1}{2}<\alpha<1$. The linear (not necessarily bounded) operator $A: D(A)\subset X\to X$ is the infinitesimal generator of a strongly continuous semigroup $\{T(t)\}_{t\ge 0}$ on $X$. The notation $F^0(t, \cdot;\cdot)$ stands for the generalized Clarke directional derivative (see \cite[page 25]{MR1058436}) of a locally Lipschitz function $F(t,\cdot): X\to \mathbb{R}$, and $H: X^*\to X$ is a bounded linear operator. The control function $u$ takes value in $L^2(I, U)$, and the admissible controls set $U$ is a separable Hilbert space, $B$ is a bounded linear operator from $U$ into $X$.

 Nonlocal problems of Caputo type have attracted the attention of the scientific community in
recent years by its application to models with anomalous diffusion in physics \cite{MR1809268}, finance \cite{MR2263769}
and hydrology \cite{benson2000application}. This type of operators have also been studied in various mathematical contexts
as analysis of PDEs, numerical analysis, operator theory and probability, see \cite{MR1278077,MR3316530}, \cite{diethelm2002analysis} and references therein. In particular, we would like to mention the interesting work of
Baeumer, Meerschaert and Nane \cite{MR2491905}, where the authors prove the equivalence of the heat Cauchy
problems
\begin{equation}
    D^{\frac{1}{2}}u=\Delta u~\text{in}~\mathbb{R}^N\times [0,\infty),~u(x,0)=u_0(x), ~x\in \mathbb{R}^N;
\end{equation}
and 
\begin{equation}
    u_t=\Delta^2u+\frac{\Delta u_0}{\sqrt{ \pi t}}~\text{in}~Q, u(x,0)=u_0(x), ~x\in \mathbb{R}^N,
\end{equation}
for smooth initial data.

Considerable advancements have been achieved in tackling the solvability and approximate controllability problems associated with hemivariational inequalities. Notably, the works of \cite{MR3280853, MR3411720} may be marked as initial important  exploration of the approximate controllability of hemivariational inequalities with integer order in Hilbert spaces. Building upon this foundation, the authors in \cite{MR3512753} extended these findings to derive results for the approximate controllability of fractional order evolution hemivariational inequalities involving Caputo fractional derivatives, still within the framework of Hilbert spaces. Recent years have witnessed many intriguing results concerning controllability problems in Hilbert spaces, as evidenced by \cite{MR3927856, MR4128438, MR4560898, MR3937061} and related references.

Upon conducting an extensive literature review, it becomes evident that a considerable amount of research has been dedicated to addressing the approximate controllability of semilinear evolution problems linked to hemivariational inequalities, particularly when the state space is confined to Hilbert spaces. 

Despite this, the exploration of approximate controllability problems in Banach spaces described by semilinear evolution hemivariational inequalities still limited in the existing literature.

Our result extends the earlier mentioned results into the following directions:
\begin{itemize}
	\item[(i)] We broaden the scope by considering the state space $X$ as a super-reflexive Banach space (see Appendix for the definition), in contrast to the separable Hilbert spaces considered earlier. Therefore, our work is a direct generalization of the works presented in \cite{MR3280853, MR3411720}. We refer to Remark \ref{RR} in Section 3 for an explanation of why the choice of super-reflexive Banach spaces is needed.
	\item[(ii)] The paper offers a unique solution to a challenge introduced by assuming $X$ as a super-reflexive Banach space, which presents issues of convexity due to the nonlinear nature of the duality mapping arising in the expression of the control. Such issues are not present when $X$ is a separable Hilbert space. Therefore, the paper's novelty is its successful resolution of the convexity problem, paving the way for approximate controllability of the fractional hemivariational control system in which $X$ is a super-reflexive Banach space. We broadly elaborate more on this fact in Section 3.
	\item[(iii)] We prove the approximate controllability result for the problem \eqref{HV1} using Hypothesis (F3) given as follows:
	\begin{itemize}
			\item[(F3)] there exists a function $\eta\in L^\frac{1}{\alpha_1}(I, \mathbb{R}^+)$ such that
		\begin{equation}
			\norm{\partial F(t,x)}=\sup\{\norm{z}_{X^*}: z\in \partial F(t,x)\}\le \eta(t), t\in I, x\in X.
		\end{equation}
		Here $\partial F$ stands for the Clarke subdifferential (see \cite[page 27]{MR1058436}) of a locally Lipschitz function $F(t,\cdot): X\to \mathbb{R}$ and $0<\alpha_1<\alpha$.
	\end{itemize}
\end{itemize}
The paper is structured into five sections:
\begin{itemize}
	\item[(1)] Section 1 is the introduction where we discuss the motivation of the considered problem and related literature review.
	\item[(2)] Section 2 revisits preliminary concepts used in subsequent sections.
	\item[(3)] In Section 3, the approximate controllability results are provided.
	\item[(4)] Section 4 provides an application that illustrates the abstract results of the paper.
	\item[(5)] Section 5 serves as an Appendix section.
\end{itemize}

\section{Preliminaries}
This section introduces some basic definitions and notations that will be used throughout the paper. Throughout the manuscript we denote $\mathcal{L}(X,Y)$ to be the Banach space of bounded linear operators from the Banach space $X$ to the Banach space $Y$ and by $\mathcal{L}(X)$ we mean the space of all bounded linear operators on $X$. Further, in this manuscript, we denote $w-X$ to be the space $X$ furnished with weak topology. Further, integrations in Banach spaces are considered in the sense of Lebsegue-Bochner.
\subsection{Basic definitions on fractional calculus}
In this section, we state some definitions, notations and preliminary facts about fractional calculus. We start with the following definition of Riemann-Liouville fractional integral.
\begin{definition}
The Riemann-Liouville fractional integral of order $\alpha$ with $0<\alpha<1$ for a function $f\in L^1(I,X)$ is defined as  
\begin{equation}
I^{\alpha}f(t)=\frac{1}{\Gamma(\alpha)}\int^{t}_{0}\frac{f(s)}{(t-s)^{\alpha}}ds , ~t\in I,
\end{equation}
 where $\Gamma$ is the gamma function.
\end{definition}
\begin{definition}
The Caputo derivative of order $\alpha, ~0<\alpha<1$ for a function $f\in C^1(I,X)$ is given as 
\begin{equation}
^CD_t^{\alpha}f(t)= \frac{1}{\Gamma(1-\alpha)}\int^{t}_{0}\frac{f^{\prime}(s)}{(t-s)^{\alpha}}ds, ~t\in I. 
\end{equation}
\end{definition}
\subsection{Concept of Multivalued maps}
 Let us mention some basic results concerning multivalued maps. For this, we refer to the books \cite{MR1831201},\cite{MR2976197}.
\begin{definition}
A multivalued map $\Gamma:X\multimap Y$ is said to be
	\begin{itemize}
		\item  convex  valued if the set $\Gamma(x)$ is convex for every $x\in X$ and closed valued if $\Gamma(x)$ is closed for all $x\in X$.
		\item  bounded if $\Gamma$ maps bounded sets in $X$ into bounded sets in $Y$. Namely, if $B_X$ is a bounded set in $X$, then the set $\Gamma(B_X)$ is bounded in $Y$, that is, 
		\begin{equation*}
			\sup_{x\in B_X}\{\sup\{\norm{y}:y\in \Gamma(x)\}\}<\infty.
		\end{equation*}
		\item  upper semicontinuous at the point $x_0\in X$ if for each open set $O_Y\subset Y$ containing $\Gamma(x_0)$, there exists an open neighbourhood $O_X$ of $x_0$ such that $\Gamma(O_X)\subset O_Y$.
	\end{itemize}
\end{definition}
Having defined some notion of multivalued maps, we are ready to introduce multivalued measurable functions.\\
Let $\Omega\subset \mathbb{R}$ be a measurable set and $\Sigma$ be the $\sigma$- algebra of subsets of $\Omega$. Also let $X$  be a separable reflexive Banach space.
\begin{definition}
	A multivalued map $\Gamma:\Omega\multimap X$ is said to be measurable if for every $C_X\subset X$ closed, the set 
	\begin{equation*}
		\{w\in \Omega: \Gamma(w)\cap C_X\neq \phi\}\in \Sigma.
	\end{equation*}
\end{definition}
Let $\mathcal{B}(X)$ be the Borel $\sigma$- algebra of subsets of $X$.
\begin{definition}
	We say that the multivalued mapping $\Gamma: \Omega\times X\multimap X$ is $\Sigma\times \mathcal{B}(X)$ measurable if
	\begin{equation*}
		\Gamma^{-1}(C_X)=\{(w,x)\in \Omega\times X: \Gamma (w,x)\cap C_X\neq \phi\}\in \Sigma\times \mathcal{B}(X),
	\end{equation*}
	for any closed set $C_X\subset X$.
\end{definition} 
\subsection{Compactness in Banach spaces}
We now introduce the concept of uniformly integrability, which plays a crucial role in determining compactness properties of $L^p, 1\le p<\infty$ spaces.
\begin{definition}
A subset $\Xi\subset L^1(I;X)$ is called \textbf{uniformly integrable} if for every $\epsilon>0$ there is a $\delta(\epsilon)>0$ such that $\displaystyle \int_{E}\norm{g(s)}ds\le \epsilon,$
for every measurable subset $E\subset I$ whose Lebesgue measure is less than or equal to $\delta(\epsilon)$, and uniformly with respect to $g\in \Xi$.
\end{definition}
\begin{remark}\label{rk}
	Suppose $\{g_n\}_{n\in \mathbb{N}}\subset L^1(I,X)$ is a sequence of functions such that 
	$\norm{g_n(t)}\le \eta(t), ~\text{a.a.}~t\in I,$
	for some $\eta\in L^1(I,\mathbb{R}^+)$,~ $\forall n\in \mathbb{N}$ . Then $\{g_n\}_{n\in \mathbb{N}}$ is uniformly integrable.
\end{remark}
We need some weak compactness criterion in $L^1$ space. The best-known result in this direction is the celebrated Dunford-Pettis Theorem.
\begin{theorem}[Dunford Pettis Theorem]\label{Thm2.2}\cite{MR3524637}
	If $(I, \Sigma, \mu)$ is a finite measure space, $X$ is reflexive, and $K\subset L^1(I, X)$ is bounded, then $K$ is relatively weakly compact in $L^1(I, X)$ if and only if it is uniformly integrable.
\end{theorem}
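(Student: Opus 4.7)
The plan is to prove the two implications separately. Throughout I will exploit the duality $L^1(I,X)^* \cong L^\infty(I,X^*)$, which is valid because $X$ reflexive forces $X^*$ to be reflexive and therefore to enjoy the Radon--Nikodym property with respect to the finite measure $\mu$. I will also use the Eberlein--\v Smulian theorem to freely pass between weak compactness and weak sequential compactness, and the scalar Dunford--Pettis theorem as a black box.

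For the necessity direction (relatively weakly compact $\Rightarrow$ uniformly integrable) I would argue by contradiction. Suppose $K$ is weakly relatively compact but not uniformly integrable. Then there exist $\epsilon>0$, $g_n\in K$, and measurable sets $E_n\subset I$ with $\mu(E_n)\to 0$ such that $\int_{E_n}\norm{g_n(t)}\,d\mu(t)>\epsilon$. Use Eberlein--\v Smulian to pass to a subsequence with $g_n\to g$ weakly in $L^1(I,X)$. Since $X$ is reflexive, the duality map from $X$ to $2^{X^*}$ admits a measurable selection $\xi_n:I\to X^*$ with $\norm{\xi_n(t)}\le 1$ and $\langle g_n(t),\xi_n(t)\rangle=\norm{g_n(t)}$. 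Testing the weakly convergent sequence $\{g_n\}$ against $\phi_n=\mathbbm{1}_{E_n}\xi_n\in L^\infty(I,X^*)$ and exploiting $\mu(E_n)\to 0$ together with equi-absolute-continuity of weakly convergent sequences in $L^1$ (applied to the scalar function $\langle g,\xi_n\rangle$) produces the contradiction $\int\langle g_n,\phi_n\rangle\,d\mu\to 0$ versus $\int\langle g_n,\phi_n\rangle\,d\mu>\epsilon$.

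For the sufficiency direction (bounded and uniformly integrable $\Rightarrow$ relatively weakly compact), let $\{g_n\}\subset K$. I would proceed in four steps. First, for every $x^*\in X^*$ the scalar sequence $\{\langle g_n(\cdot),x^*\rangle\}\subset L^1(I,\mathbb{R})$ is bounded and uniformly integrable, hence weakly relatively compact by the scalar Dunford--Pettis theorem. Second, by a diagonal argument over a countable norming subset of $X^*$ (available since $X$ reflexive makes the closed unit ball weakly compact, hence weakly metrizable on separable subspaces) extract a subsequence $\{g_{n_k}\}$ such that the vector measures $\nu_k(E):=\int_E g_{n_k}\,d\mu$ converge setwise to some countably additive vector measure $\nu:\Sigma\to X$; uniform integrability supplies the uniform countable additivity needed to legitimize setwise limits via a Vitali--Hahn--Saks/Brooks--Dinculeanu argument. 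Third, since $X$ has the Radon--Nikodym property, write $\nu(E)=\int_E g\,d\mu$ for a unique $g\in L^1(I,X)$. Fourth, deduce $g_{n_k}\to g$ weakly in $L^1(I,X)$ by testing against $L^\infty(I,X^*)$: setwise convergence handles simple functions in $L^\infty(I,X^*)$ directly, and a standard density/approximation argument using the uniform integrability bound transfers the conclusion to the full dual space.

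The main obstacle is Step 2 of the sufficiency direction, which is where the vector-valued theory genuinely goes beyond the scalar case: the argument must produce a single subsequence that simultaneously makes $\{\langle \nu_{n_k}(\cdot),x^*\rangle\}$ converge for every $x^*\in X^*$. Reflexivity of $X$ is the hypothesis that rescues this passage, both because it allows a countable separating family in $X^*$ to suffice and because it guarantees the Radon--Nikodym representation in Step 3 (these are precisely the places the hypothesis enters, so the theorem indeed fails for general Banach spaces).
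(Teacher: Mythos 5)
The paper does not prove this statement at all---it is quoted verbatim from the cited reference---so your sketch has to stand on its own. Its sufficiency half is essentially the standard Dunford-type argument (scalar Dunford--Pettis, diagonalization, Vitali--Hahn--Saks for uniform countable additivity, Radon--Nikodym representation, then testing against $L^\infty(I,X^*)$) and is sound in outline, with two repairable imprecisions: a countable norming subset of $X^*$ need not exist when $X$ is nonseparable, so you must first use the strong measurability of the $g_n$ (essentially separably valued) to reduce to a separable reflexive subspace and a countably generated sub-$\sigma$-algebra; and before invoking the RNP you must check that the limit measure $\nu$ is $\mu$-continuous and of bounded variation (both follow from boundedness plus uniform integrability of $K$, but neither is mentioned).

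The genuine gap is in the necessity direction. Weak convergence $g_n\rightharpoonup g$ only yields $\int\langle g_n,\phi\rangle\,d\mu\to\int\langle g,\phi\rangle\,d\mu$ for a \emph{fixed} $\phi\in L^\infty(I,X^*)$, whereas you test against the varying functionals $\phi_n=\mathbbm{1}_{E_n}\xi_n$. Splitting $\int\langle g_n,\phi_n\rangle\,d\mu=\int\langle g_n-g,\phi_n\rangle\,d\mu+\int_{E_n}\langle g,\xi_n\rangle\,d\mu$, the second term does tend to $0$, but the first term is precisely the quantity whose uniform smallness on small sets is the uniform integrability being proved; and since $\langle g_n(t),\xi_n(t)\rangle=\norm{g_n(t)}$, appealing to ``equi-absolute-continuity of weakly convergent sequences in $L^1$'' here presupposes that the scalar family $\{\norm{g_n(\cdot)}\}$ is uniformly integrable, which is the statement itself---the argument is circular, and no contradiction is actually produced. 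The standard repair is a disjointification/gliding-hump step: pass to a subsequence so that the offending mass sits on pairwise disjoint sets $A_n$ (possible since $\mu(E_n)\to 0$ while each single $\norm{g_n}$ is integrable), assemble the norming selections into one function $\phi=\sum_n\mathbbm{1}_{A_n}\xi_n\in L^\infty(I,X^*)$, and note that $g\mapsto\bigl(\int_{A_n}\langle g,\phi\rangle\,d\mu\bigr)_n$ is, by disjointness, a bounded linear operator from $L^1(I,X)$ into $\ell^1$; it sends the relatively weakly compact set $K$ to a relatively weakly compact, hence by Schur's property relatively norm compact, subset of $\ell^1$, whose tails are uniformly small, contradicting the fact that the $n$-th coordinate of the image of $g_n$ is at least $\epsilon/2$ (Rosenthal's lemma gives an alternative route). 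As written, the necessity half is not established.
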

We state an elementary Lemma that is useful in our subsequent analysis. 
Following Step 3, Step 4 of the proof of \cite[Theorem 4.1]{MR4076740} we can prove the following Lemma.
\begin{lemma}\label{Lem3.6}
Let the operator $\Xi: L^1(I,X)\to C(I,X)$ be such that
		\begin{equation*}
			\Xi(g)(t)=\int_{0}^{t}(t-s)^{\alpha-1}T_{\alpha}(t-s)g(s)ds,~ t\in I,
		\end{equation*}
where $\frac{1}{2}<\alpha<1$ and $\{T_{\alpha}(t)\}_{t\ge 0}$ is a family of bounded linear operators  given in \eqref{Talpha} which is compact for $t>0$. If $\{g_n\}_{n\in \mathbb{N}}\subset L^1(I,X)$ be such that
\begin{equation}
    \norm{g_n(t)}_X\le \eta(t),~\text{for a.a.}~t\in I,
\end{equation}
for some $\eta\in L^{\frac{1}{\alpha_1}}(I,\mathbb{R}^+)$ with $0<\alpha_1<\alpha$, then the sequence $\{\Xi(g_n)\}_{n\in \mathbb{N}}$ is relatively compact in $C(I,X)$.
	\end{lemma}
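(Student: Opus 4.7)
My plan is to verify the Arzel\`a--Ascoli conditions: (i) for each $t \in I$ the slice $\{\Xi(g_n)(t)\}_{n\in\mathbb{N}}$ is relatively compact in $X$, and (ii) the family $\{\Xi(g_n)\}_{n\in\mathbb{N}}$ is equicontinuous on $I$. The basic integrability fact I will use repeatedly is that H\"older's inequality with exponents $\frac{1}{1-\alpha_1}$ and $\frac{1}{\alpha_1}$ (legitimate since $\alpha_1<\alpha$) gives
\[
\int_0^t (t-s)^{\alpha-1}\eta(s)\,ds \le \Bigl(\tfrac{1-\alpha_1}{\alpha-\alpha_1}\Bigr)^{1-\alpha_1} t^{\alpha-\alpha_1}\,\|\eta\|_{L^{1/\alpha_1}(I,\mathbb{R}^+)}.
\]
Together with $M_\alpha := \sup_{t\in I}\|T_\alpha(t)\|_{\mathcal{L}(X)}<\infty$, this ensures every $\Xi(g_n)(t)$ is a well-defined Bochner integral, the family is uniformly bounded in $C(I,X)$, and tail integrals over short intervals shrink uniformly in $n$.

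For (i), the case $t=0$ is trivial. For $t>0$ I would fix $\epsilon\in(0,t)$ and split $\Xi(g_n)(t) = \Xi_\epsilon(g_n)(t) + R_\epsilon(g_n)(t)$, where $\Xi_\epsilon$ integrates over $[0,t-\epsilon]$ and $R_\epsilon$ over $[t-\epsilon,t]$. The remainder satisfies $\|R_\epsilon(g_n)(t)\|\le M_\alpha\int_{t-\epsilon}^t(t-s)^{\alpha-1}\eta(s)\,ds$, which tends to $0$ as $\epsilon\to 0$ uniformly in $n$ by the estimate above. For the truncated part, I would invoke the compactness of $T_\alpha(\tau)$ for $\tau>0$ together with its operator-norm continuity on $[\epsilon,a]$ (a standard consequence of compactness plus strong continuity) to approximate $T_\alpha(t-\cdot)$ uniformly on $[0,t-\epsilon]$ by a step function $\sum_i T_\alpha(t-s_i)\mathbbm{1}_{[s_{i-1},s_i]}$. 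Each summand of the resulting approximant of $\Xi_\epsilon(g_n)(t)$ lies in $T_\alpha(t-s_i)(K_i)$ with $K_i\subset X$ bounded, hence in a relatively compact set. This shows $\{\Xi(g_n)(t)\}_n$ is totally bounded in $X$.

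For (ii), I would use the standard three-piece splitting for $0\le t_1<t_2\le a$:
\[
\Xi(g_n)(t_2)-\Xi(g_n)(t_1) = I_1^n+I_2^n+I_3^n,
\]
with $I_1^n := \int_{t_1}^{t_2}(t_2-s)^{\alpha-1}T_\alpha(t_2-s)g_n(s)\,ds$, $I_2^n$ the integral on $[0,t_1]$ of $[(t_2-s)^{\alpha-1}-(t_1-s)^{\alpha-1}]T_\alpha(t_2-s)g_n(s)$, and $I_3^n$ the integral on $[0,t_1]$ of $(t_1-s)^{\alpha-1}[T_\alpha(t_2-s)-T_\alpha(t_1-s)]g_n(s)$. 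The norms $\|I_1^n\|$ and $\|I_2^n\|$ go to $0$ as $t_2\to t_1$ uniformly in $n$ by H\"older and Lebesgue dominated convergence, using the domination $\|g_n\|\le\eta$. For $I_3^n$ I would again truncate away from $s=t_1$: on $[t_1-\epsilon,t_1]$ the integrand is dominated by $2M_\alpha(t_1-s)^{\alpha-1}\eta(s)$, while on $[0,t_1-\epsilon]$ the factor $\|T_\alpha(t_2-s)-T_\alpha(t_1-s)\|_{\mathcal{L}(X)}$ tends to $0$ uniformly in $s$ by operator-norm continuity of $T_\alpha$ on intervals bounded away from zero.

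The main obstacle is exactly the operator-norm continuity of $\tau\mapsto T_\alpha(\tau)$ on $[\epsilon,a]$, which is needed both for the step-function approximation in (i) and for the estimate on $I_3^n$; this is precisely where the compactness hypothesis on $\{T_\alpha(t)\}_{t>0}$ is essential and cannot be relaxed to mere strong continuity. Once (i) and (ii) are secured, Arzel\`a--Ascoli yields the claimed relative compactness of $\{\Xi(g_n)\}_{n\in\mathbb{N}}$ in $C(I,X)$.
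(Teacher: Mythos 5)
Your argument is correct and follows essentially the same route the paper itself relies on (it delegates the proof to Steps 3--4 of the cited Theorem 4.1): uniform boundedness via the H\"older estimate with exponents $\tfrac{1}{1-\alpha_1},\tfrac{1}{\alpha_1}$, pointwise relative compactness by splitting off the singular tail near $s=t$ and using compactness of $T_\alpha(\tau)$ for $\tau\ge\epsilon$, equicontinuity by the standard three-term decomposition, and then Arzel\`a--Ascoli.

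One point deserves a more careful justification: you invoke operator-norm continuity of $\tau\mapsto T_\alpha(\tau)$ on $[\epsilon,a]$ as ``a standard consequence of compactness plus strong continuity,'' but that theorem is a statement about $C_0$-semigroups, and $\{T_\alpha(t)\}_{t\ge0}$ is not a semigroup. The needed fact is nevertheless true: since the compact semigroup $T(t)$ is norm continuous for $t>0$, the subordination formula \eqref{Talpha} gives, for $t,t'\ge\epsilon>0$,
\begin{equation*}
\norm{T_\alpha(t)-T_\alpha(t')}_{\mathcal{L}(X)}\le \alpha\int_0^\infty \tau\,\xi_\alpha(\tau)\,\norm{T(t^\alpha\tau)-T(t'^\alpha\tau)}_{\mathcal{L}(X)}\,d\tau,
\end{equation*}
and the right-hand side tends to $0$ as $t'\to t$ by dominated convergence (the integrand is bounded by $2M\tau\xi_\alpha(\tau)$, which is integrable). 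With this supplement, both your step-function approximation in the pointwise-compactness step and your estimate of $I_3^n$ are fully justified; alternatively, one can bypass norm continuity altogether by the double truncation in $(s,\tau)$ and factoring $T((t-s)^\alpha\tau)=T(\epsilon^\alpha\delta)\,T((t-s)^\alpha\tau-\epsilon^\alpha\delta)$, which is the form of the argument used in the reference the paper cites.
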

We consider the following fixed point theorem, which we are going to use for our results.
\begin{theorem}\label{fixed}\cite{MR46638}
	Let $X$ be a Hausdorff locally convex topological vector space, $K$ a compact convex subset of $X$, and $\Gamma: K\multimap K$ an upper semicontinuous multimap with closed, convex values. Then the multimap $\Gamma$ has a fixed point in $K$.
\end{theorem}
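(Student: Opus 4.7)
The statement is the Kakutani--Fan--Glicksberg fixed point theorem, so my plan is to reduce it to the finite-dimensional Kakutani theorem via a partition-of-unity approximation that exploits the local convexity of $X$ and the compactness of $K$.

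First I would take as known (or prove by simplicial subdivision plus Brouwer) the finite-dimensional Kakutani theorem: on a compact convex $K_{0}\subset\mathbb{R}^{m}$, every upper semicontinuous multimap with nonempty compact convex values has a fixed point. The idea there is to pick $y_{v}\in\Gamma_{0}(v)$ at each vertex $v$ of a simplicial subdivision of mesh $\varepsilon$, extend linearly to a continuous $f_{\varepsilon}:K_{0}\to K_{0}$, apply Brouwer to obtain $x_{\varepsilon}$ with $f_{\varepsilon}(x_{\varepsilon})=x_{\varepsilon}$, and extract a limit as $\varepsilon\to 0$ using upper semicontinuity together with closedness of the values.

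Next I would construct, for each convex symmetric open neighborhood $U$ of $0$ in $X$, an ``approximate fixed point'' $z_{U}\in K$. By compactness, pick $x_{1},\ldots,x_{n}\in K$ with $K\subset\bigcup_{i=1}^{n}(x_{i}+U)$, take a continuous partition of unity $\{\lambda_{i}\}$ subordinate to $\{K\cap(x_{i}+U)\}$, and select $y_{i}\in\Gamma(x_{i})$. Set $K_{U}:=\mathrm{co}\{y_{1},\ldots,y_{n}\}\subset K$ and define $f_{U}:K_{U}\to K_{U}$ by $f_{U}(z)=\sum_{i=1}^{n}\lambda_{i}(z)\,y_{i}$. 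This is a continuous self-map of a finite-dimensional compact convex set, so Brouwer yields $z_{U}=f_{U}(z_{U})$. By the partition-of-unity construction together with upper semicontinuity and convexity of the values of $\Gamma$, one verifies that for every preassigned convex symmetric neighborhood $V\subset U$ there exists $w_{U}\in \Gamma(z_{U})$ with $w_{U}-z_{U}\in V$; this is the analogue of Cellina's approximate selection.

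Finally, compactness of $K$ lets me extract from the net $\{(z_{U},w_{U})\}$, indexed by the neighborhood base of $0$ directed by reverse inclusion, a subnet converging to $(z^{\ast},z^{\ast})$ for some $z^{\ast}\in K$. Since the graph of $\Gamma$ is closed in $K\times K$ (a consequence of upper semicontinuity with closed values on compact $K$), we conclude $z^{\ast}\in \Gamma(z^{\ast})$. The main obstacle is the justification of the approximate-graph property in the previous step: it depends delicately on convexity of $\Gamma(x)$ and on the direction of upper semicontinuity that controls $\Gamma(y)$ by $\Gamma(x)$ for $y$ near $x$, and this is precisely where the convexity assumption on the values of $\Gamma$ is genuinely used.
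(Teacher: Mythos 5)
The paper offers no proof of this theorem at all — it is quoted verbatim from \cite{MR46638} — so what follows assesses only the internal correctness of your argument. Your architecture (partition-of-unity approximation, Brouwer's theorem on the finite-dimensional convex hull, passage to a limit along the neighborhood base) is the standard Glicksberg-type route, but the pivotal intermediate claim is false as stated: it is not true that the Brouwer fixed point $z_U$ admits $w_U\in\Gamma(z_U)$ with $w_U-z_U\in V$. Upper semicontinuity controls $\Gamma(x_i)$ by neighborhoods of $\Gamma(z)$ only for $x_i$ in a neighborhood of $z$ whose size depends on $z$ and is in no way tied to the mesh $U$ of your cover, and the distance from $z_U$ to its own value set need not become small at all. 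Concretely, take $X=\mathbb{R}$, $K=[0,1]$, $\Gamma(x)=\{1\}$ for $x<\tfrac12$, $\Gamma(\tfrac12)=[0,1]$, $\Gamma(x)=\{0\}$ for $x>\tfrac12$: this is upper semicontinuous with closed convex values, every approximating map $f_U$ built from a cover of mesh $\delta$ has its fixed point $z_U$ within $\delta$ of $\tfrac12$, yet whenever $z_U\neq\tfrac12$ the set $\Gamma(z_U)$ is $\{0\}$ or $\{1\}$, at distance roughly $\tfrac12$ from $z_U$ no matter how small $U$ is. Hence the net $(z_U,w_U)$ that your closed-graph step requires need not exist, and the conclusion does not follow as written. (What you call Cellina's approximate selection gives graph proximity at a \emph{nearby} point, i.e.\ some $x_U'$ with $x_U'-z_U\in U$ and $w_U\in\Gamma(x_U')$ close to $z_U$ — and even that requires the cover to be adapted to the local moduli of upper semicontinuity, not an arbitrary cover of mesh $U$ — never membership in $\Gamma(z_U)$ itself.)

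The gap is repairable without changing the architecture, provided the limit argument is run at the cluster point rather than at each $z_U$. What your construction actually yields is $z_U=f_U(z_U)\in\mathrm{co}\,\Gamma\bigl((z_U+U)\cap K\bigr)$. Let $z^*$ be a cluster point of the net $(z_U)$, which exists by compactness of $K$. If $z^*\notin\Gamma(z^*)$, separate the point from the closed convex set $\Gamma(z^*)$ by a continuous linear functional $\varphi$ with $\varphi(z^*)>\alpha>\sup\{\varphi(y):y\in\Gamma(z^*)\}$ (this is where local convexity of $X$ enters); upper semicontinuity at the single point $z^*$ gives a neighborhood $W$ of $z^*$ with $\varphi<\alpha$ on $\Gamma(W\cap K)$, and for $U$ small with $z_U$ close to $z^*$ every $y_i$ entering $f_U(z_U)$ lies in $\Gamma(W\cap K)$, so $\varphi(z_U)=\varphi(f_U(z_U))<\alpha$, contradicting $\varphi(z_U)\to\varphi(z^*)>\alpha$ along a subnet. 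Equivalently, one can argue that $z_U\in\Gamma(z^*)+V$ eventually, using upper semicontinuity at $z^*$ together with convexity of $\Gamma(z^*)+V$, and then $z^*\in\bigcap_V\overline{\Gamma(z^*)+V}=\Gamma(z^*)$ by closedness of the values. With either repair the convexity of the values and the local convexity of $X$ are used exactly where they must be, and the rest of your outline (the finite-dimensional Kakutani/Brouwer step and the compactness bookkeeping) stands.
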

\subsection{Nonsmooth Analysis}
In what follows, let us define Clarke subdifferential (see \cite[page 27]{MR1058436}) of a locally Lipschitzian functional $F: X\to \mathbb{R}$. We denote by $F^0(y;z)$ the Clarke generalized directional derivative of $F$ at $y$ in the direction $z$, that is
\begin{equation}
	F^0(y;z)=\lim_{\epsilon\to 0^+}\sup_{\xi\to y}\frac{F(\xi+\epsilon z)-F(\xi)}{\epsilon}.
\end{equation} 
Recall also that the Clarke subdifferential of $F$ at $y$, denoted by $\partial F(y)$ is a subset of $X^*$ given by
\begin{equation}
	\partial F(y)=\{y^*\in X^*: F^0(y;z)\ge \langle y^*,z\rangle_{X^*,X}, \forall z\in X\}.
\end{equation}
The following basic properties of the generalized directional derivative and the generalized gradient are important in our main results.
\begin{lemma}\label{Lemma 2.14} \cite[Proposition 2.1.2]{MR1058436}
	If $F: X\to \mathbb{R}$ is a locally Lipschitz function, then
	\begin{itemize}
		\item[(i)] for every $z\in X$, one has 
		\begin{equation}
			F^0(y;z)=\max\{\langle y^*,z\rangle_{X^*,X}: ~\text{for all}~y^*\in \partial F(y)\}.
		\end{equation}
		\item[(ii)] for every $y\in X$, the gradient $\partial F(y)$ is a nonempty, convex, weak$^*$-compact subset of $X^*$ and $\norm{y^*}_{X^*}\le K$ for any $y^*\in \partial F(y)$ (where $K>0$ is the Lipschitz constant of $F$ near $y$).
		\item[(iii)] the graph of the generalized gradient $\partial F$ is closed in $X\times w-X^*$ topology, that is, if $\{y_n\}_{n\in \mathbb{N}}\subset X$ and $\{y_n^*\}_{n\in \mathbb{N}}\subset X^*$ are sequences such that $y_n^*\in \partial F(y_n)$ and $y_n\to y$ in $X$, $y_n^*\to y^*$ weakly in $X^*$, then $y^*\in \partial F(y)$. Here, $w-X^*$ denotes the Banach space $X^*$ furnished with the weak topology.
		\item[(iv)] the multifunction $y\multimap \partial F(y)\subset X^*$ is upper semicontinuous from $X$ into $w-X^*$.
	\end{itemize} 
\end{lemma}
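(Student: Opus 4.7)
The plan is to verify the four assertions in sequence, relying on the preparatory fact that for fixed $y$ the map $z \mapsto F^0(y;z)$ is a sublinear (positively homogeneous and subadditive) continuous functional on $X$; in fact, it is Lipschitz with the same local Lipschitz constant $K$ of $F$ near $y$. Positive homogeneity follows from a change of variables in the defining $\limsup$, subadditivity from the chain estimate $F(\xi+\epsilon(z_1+z_2)) - F(\xi) = [F(\xi+\epsilon z_1+\epsilon z_2) - F(\xi+\epsilon z_2)] + [F(\xi+\epsilon z_2) - F(\xi)]$ followed by a double $\limsup$, and the Lipschitz bound $F^0(y;z) \le K\|z\|$ from the Lipschitz estimate on $F$ valid for $\xi$ close to $y$ and $\epsilon$ small.

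With this in hand, (i) follows from a Hahn--Banach argument: for fixed $z$, extend the linear functional $tz \mapsto t\, F^0(y;z)$ defined on $\mathbb{R}z$ to a functional $y^* \in X^*$ dominated by the sublinear majorant $F^0(y;\cdot)$; then $y^* \in \partial F(y)$ and $\langle y^*, z\rangle_{X^*,X} = F^0(y;z)$, giving the ``$\ge$'' direction. The reverse inequality is immediate from the definition of $\partial F(y)$. Assertion (ii) follows from the same Hahn--Banach argument (non-emptiness), from the representation of $\partial F(y)$ as an intersection of weak$^*$-closed half-spaces (convexity and weak$^*$-closedness), from $\langle y^*, z\rangle \le F^0(y;z) \le K\|z\|$ for all $z$ (the bound $\|y^*\|_{X^*} \le K$), and from Banach--Alaoglu applied to the bounded weak$^*$-closed set $\partial F(y)$ (weak$^*$-compactness).

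For (iii), if $y_n \to y$ in $X$ and $y_n^* \rightharpoonup y^*$ weakly in $X^*$, with $\langle y_n^*, z\rangle \le F^0(y_n; z)$ for every $z$, then passing to the limit and using the upper semicontinuity of $\xi \mapsto F^0(\xi;z)$ (a direct consequence of $F^0$ being a $\limsup$ over $\xi \to y$) yields $\langle y^*, z\rangle \le F^0(y;z)$, so $y^* \in \partial F(y)$. Note that in the reflexive setting of the paper, the weak and weak$^*$ topologies on $X^*$ coincide, so there is no ambiguity in stating (iii) with the weak topology. Finally, (iv) is obtained by combining (iii) with the uniform bound $\|y^*\|_{X^*} \le K$ on $\partial F$ near $y$ (local boundedness) and the weak$^*$-compactness of the values: if upper semicontinuity failed at $y$, a standard sequential argument would produce $y_n \to y$ and $y_n^* \in \partial F(y_n)$ lying outside a prescribed weakly open neighbourhood of $\partial F(y)$; local boundedness combined with Banach--Alaoglu would extract a weakly convergent subsequence whose limit, by (iii), belongs to $\partial F(y)$, a contradiction.

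The main obstacle is the upper semicontinuity of $\xi \mapsto F^0(\xi;z)$ used in (iii), which requires careful handling of the nested $\limsup$ in the definition of $F^0$; all the remaining steps are routine applications of Hahn--Banach, Banach--Alaoglu, and the closed-graph characterization of upper semicontinuity into compact-valued maps. Since this result is classical and available in \cite[Proposition 2.1.2]{MR1058436}, the proof is only sketched here for completeness.
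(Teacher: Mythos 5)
The paper gives no proof of this lemma at all---it is quoted directly from Clarke \cite[Propositions 2.1.1, 2.1.2 and 2.1.5]{MR1058436}---and your sketch reproduces essentially that standard argument: sublinearity and the bound $F^0(y;z)\le K\norm{z}$ plus Hahn--Banach for (i) and nonemptiness in (ii), the half-space representation and Banach--Alaoglu for convexity and weak$^*$-compactness, upper semicontinuity of $\xi\mapsto F^0(\xi;z)$ for the closed graph in (iii), and closed graph plus local boundedness and compactness of values for (iv). The only point to watch is that the sequential extraction in (iv) uses weak sequential compactness of bounded sets in $X^*$, which is justified in the paper's (super-)reflexive setting by Eberlein--\v{S}mulian but would require nets in the weak$^*$ topology for a general Banach space, exactly as in Clarke's original proof.
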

\begin{lemma}\label{Lemma 2.15}  \cite[Proposition 3.44]{MR2976197}
	Let $X$ be a separable reflexive Banach space and $F: I\times X\to \mathbb{R}$ be a function such that $F(\cdot,x)$ is measurable for all $x\in X$ and $F(t,\cdot)$ is locally Lipschitz on $X$ for almost all $t\in I$. Then, the multifunction $(t,x)\multimap \partial F(t,x)$ is measurable, where $\partial F$ denotes the Clarke subdiffeential of $F(t,\cdot)$.
\end{lemma}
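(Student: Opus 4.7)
I would reduce measurability of the $X^*$-valued multifunction $\partial F$ to (scalar) measurability of the generalized directional derivatives $F^0(t,x;v)$. Since $X$ is separable reflexive, $X^*$ is also separable, and bounded subsets of $X^*$ are weak$^*$-metrizable. Fix a countable dense set $\{v_n\}_{n\in\mathbb{N}}\subset X$. Because $v\mapsto F^0(t,x;v)$ is sublinear and Lipschitz in $v$ with the same local Lipschitz constant as $F(t,\cdot)$, and $v\mapsto\langle y^*,v\rangle$ is continuous, Clarke's defining inequality for $\partial F$ need only be tested against the dense family $\{v_n\}$:
\begin{equation*}
\partial F(t,x) \;=\; \bigcap_{n\in\mathbb{N}} \bigl\{y^*\in X^* \,:\, \langle y^*,v_n\rangle_{X^*,X} \le F^0(t,x;v_n)\bigr\}.
\end{equation*}
This representation shifts the problem to showing that each $(t,x)\mapsto F^0(t,x;v_n)$ is $\Sigma\times\mathcal{B}(X)$-measurable.

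\textbf{Measurability of $F^0$.} First I would observe that $F:I\times X\to\mathbb{R}$ is jointly $\Sigma\times\mathcal{B}(X)$-measurable: by hypothesis $F(\cdot,x)$ is measurable for each $x$ and $F(t,\cdot)$ is continuous for a.a.\ $t$ (locally Lipschitz), so $F$ is Carath\'eodory, and joint measurability follows from separability of $X$ via approximation of the $x$-argument by countably-valued functions. Next, fix $v\in X$ and let $D\subset X$ be a fixed countable dense subset. By local Lipschitz continuity of $F(t,\cdot)$, the supremum and $\limsup$ defining $F^0$ can be restricted to a countable dense set of parameters:
\begin{equation*}
F^0(t,x;v) \;=\; \lim_{k\to\infty}\;\sup\Bigl\{\tfrac{F(t,\xi+\lambda v)-F(t,\xi)}{\lambda} \,:\, \xi\in D,\ \|\xi-x\|<\tfrac{1}{k},\ \lambda\in\mathbb{Q}\cap(0,\tfrac{1}{k})\Bigr\}.
\end{equation*}
Thus $(t,x)\mapsto F^0(t,x;v)$ is a monotone limit of countable suprema of jointly measurable functions, hence measurable on $I\times X$.

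\textbf{From $F^0$ to $\partial F$, and the main obstacle.} For each $n$, the function $g_n(t,x,y^*):=\langle y^*,v_n\rangle-F^0(t,x;v_n)$ is measurable in $(t,x)$ and continuous (in fact affine) in $y^*$, hence Carath\'eodory; the sublevel set $A_n:=\{g_n\le 0\}$ belongs to $\Sigma\times\mathcal{B}(X)\times\mathcal{B}(X^*)$. The graph of $\partial F$ is $\bigcap_n A_n$, and is therefore jointly measurable. Since $\partial F$ has nonempty, convex, weak$^*$-compact (hence norm-closed) values in the separable space $X^*$, measurability of the graph together with the Kuratowski--Ryll-Nardzewski selection theorem yields a Castaing representation $\partial F(t,x)=\overline{\{\sigma_m(t,x)\}}_m$ by measurable selectors, which is equivalent to measurability of the multifunction in the required Effros sense. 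The main obstacle is the measurability of $F^0$: the raw definition involves a $\limsup$ over an uncountable family of pairs $(\xi,\lambda)$, and the replacement by countably many parameters hinges crucially on the \emph{uniform} local Lipschitz behaviour of $F(t,\cdot)$ — the one place where the separability of $X$ and the locally Lipschitz hypothesis both enter decisively.
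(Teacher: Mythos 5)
The paper itself gives no proof of this lemma; it simply cites \cite[Proposition 3.44]{MR2976197}, and the argument there is essentially your first two steps: reduce the defining inequality of $\partial F(t,x)$ to a countable dense set of directions, and obtain measurability of $(t,x)\mapsto F^0(t,x;v)$ by rewriting the $\limsup$ over countable dense parameters $\xi\in D$, $\lambda\in\mathbb{Q}$, using that $F$ is Carath\'eodory (hence jointly measurable, by separability of $X$) and that $F(t,\cdot)$ is continuous. Those two steps of yours are correct, including the observation that $X^*$ is separable because $X$ is separable and reflexive.

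The gap is in your last step. The Kuratowski--Ryll-Nardzewski theorem takes (weak) measurability of the closed-valued multifunction as its \emph{hypothesis} and returns measurable selections; it cannot be fed mere graph measurability. For closed-valued maps into a Polish space, membership of the graph in $\Sigma\times\mathcal{B}(X)\times\mathcal{B}(X^*)$ is in general strictly weaker than measurability of the multifunction; to pass from the graph to sets of the form $\{(t,x):\partial F(t,x)\cap C\neq\emptyset\}$ one must project, and the Aumann--von Neumann projection theorem requires a complete $\sigma$-finite measure on the domain. Here the domain carries only the product $\sigma$-algebra $\Sigma\times\mathcal{B}(X)$, whose second factor has no measure attached, so projection yields only analytic (universally measurable) sets, not sets of $\Sigma\times\mathcal{B}(X)$ as the lemma, in the sense of the paper's definition of $\Sigma\times\mathcal{B}(X)$-measurability, demands. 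The repair is short and uses material you already assembled: by Lemma \ref{Lemma 2.14}(i), $F^0(t,x;v)=\max\{\langle y^*,v\rangle_{X^*,X}: y^*\in\partial F(t,x)\}$, so $F^0(t,x;\cdot)$ is precisely the support function of the nonempty, convex, weak$^*$-compact set $\partial F(t,x)$. Your Step 2 shows these support functions are measurable in $(t,x)$ for every fixed direction, i.e. $\partial F$ is scalarly measurable; since $X^*$ is separable and reflexive and the values are closed, convex and bounded, the Castaing--Valadier criterion (scalar measurability implies measurability for closed convex valued multifunctions into such spaces) gives the conclusion directly, with no detour through the graph.
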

\subsection{Hypothesis}
Throughout the paper, we impose the following Hypotheses:
\begin{itemize}
	\item[(T)] The operator $A$ generates a strongly continuous semigroup $\{T(t)\}_{t\ge 0}$ of bounded linear operators $T(t): X\to X$ and $T(t)$ is compact for $t>0$. Moreover, there exists a constant $M>0$ such that
	\begin{equation}
		\norm{T(t)}_{\mathcal{L}(X)}\le M, ~~0\le t\le a.
	\end{equation}
	\item[(U)] The fractional linear control system 
	\begin{equation}\label{Linear}
		\begin{cases}
			^CD_t^{\alpha}q(t)=Aq(t)+Bu(t),~~ t\in I\\
			q(0)=x_0
		\end{cases}
	\end{equation}
	is approximately controllable in $I$.
\end{itemize}
We also assume that the function $F: I\times X\to \mathbb{R}$ satisfies the following Hypotheses.
\begin{itemize}
	\item[(F1)] the function $t\mapsto F(t,x)$ is measurable for all $x\in X$.
	\item[(F2)] the function $x\mapsto F(t,x)$ is locally Lipschitz continuous for a.e. $t\in I$.
	\item[(F3)] there exists a function $\eta\in \ L^{\frac{1}{\alpha_1}}  (I, \mathbb{R}^+)$ such that
		\begin{equation}
			\norm{\partial F(t,x)}=\sup\{\norm{z}_{X^*}: z\in \partial F(t,x)\}\le \eta(t), t\in I, x\in X.
		\end{equation}
\end{itemize}

Note that under Hypotheses (F1)-(F3), by virtue of Lemmas \ref{Lemma 2.14} and \ref{Lemma 2.15} the multimap $G:I\times X\multimap X^*$ given by $G(t,x)=\partial F(t,x)$ satisfies the following Hypotheses (G1)-(G3).
\begin{itemize}
	\item[(G1)] the multimap $G$ has nonempty, convex and weakly compact values; and the multimap $G(\cdot,x): I\multimap X^*$ is measurable for all $x\in X$,
	\item[(G2)] the multimap $G(t,\cdot): X\multimap X^*$ has strongly weakly closed graph, that is if $x_n\to x$ in $X$, $y_n\rightharpoonup y$ in $X^*$ with $y_n\in G(t,x_n)$, then $y\in G(t,x)$.
	\item[(G3)] there exists a function  $\eta\in \ L^{\frac{1}{\alpha_1}}(I, \mathbb{R}^+)$ such that
	\begin{equation}
		\norm{G(t,x)}=\sup\{\norm{y}_{X^*}: y\in G(t,x)\}\le \eta(t), ~\text{a.a.}~t\in I, x\in X.
	\end{equation} 
\end{itemize}

\begin{remark}
In view of Lebourg Mean Value Theorem \cite[Theorem 2.3.7]{MR1058436}, under Hypotheses (F2) and (G3) it is clear that the function $F(t,\cdot):X\to \mathbb{R}$ must be Lipschitz continuous. Therefore, the first approximate controllability result given by Theorem \ref{MR} holds under the assumption that the function $F(t,\cdot):X\to \mathbb{R}$ is Lipschitz continuous. 
\end{remark}

\section{Main Result}
This section contains the approximate controllability of the fractional evolution hemivariational control problem \eqref{HV1}.

We start by showing that the fractional evolution hemivariational control problem \eqref{HV1} is connected to the following fractional differential inclusion involving Clarke subdifferential
\begin{equation}\label{PC1}
	\begin{cases}
		{}^C D_t^\alpha q(t)\in Aq(t)+H\partial F(t, q(t))+Bu(t), t\in I=[0,a]\\
		q(0)=x_0.
	\end{cases}
\end{equation}
In the above, ${}^C D_t^\alpha q(t)$ denotes the Caputo fractional derivative of the function $q(t)$ of order $\alpha$ with $\frac{1}{2} < \alpha< 1$.
The linear operator $A:D(A)\subset X\to X$ is the infinitesimal generator of a strongly continuous semigroup $\{T(t)\}_{t\ge 0}$ on a separable super-reflexive Banach space $X$. The multimap $\partial F$ stands for the Clarke subdifferential of a locally Lipschitz function $F(t,\cdot): X\to \mathbb{R}$ and $H: X^*\to X$ is a bounded linear operator.

We need to introduce the solution concept of the fractional differential inclusion \eqref{PC1}. We now define the selection map $S_{\partial F}: C(I,X)\multimap L^{1}(I,X^*)$ as follows:
\begin{equation}\label{Selection}
	S_{\partial F}(q)=\{f\in L^{1}(I,X^*): f(t)\in \partial F(t, q(t))~\text{a.a.}~t\in I\}.
\end{equation}
By assuming that the multimap $S_{\partial F}$ is well defined, we can define the notion of solutions of the Problem \eqref{PC1}.
\begin{definition}\label{solution of DI}
Fix $u(\cdot)\in L^2(I,U)$. A function $q(\cdot)$ is a solution of the problem \eqref{PC1} if $q(\cdot)$ is a solution of the problem
\begin{equation}\label{NHGS}
	\begin{cases}
		{}^C D_t^\alpha q(t)=Aq(t)+Bu(t)+Hf(t),~~ ~\text{a.a.}~t\in I\\
		q(0)=x_0,
	\end{cases}
\end{equation}
for some $f\in S_{\partial F}(q)$.
\end{definition}
Suppose $q$ is a solution of the inclusion problem \eqref{PC1}. Then, according to the Definition \ref{solution of DI} we obtain
\begin{equation}\label{duality product1}
    {}^C D_t^\alpha q(t)=Aq(t)+Bu(t)+Hf(t),~~ ~\text{a.a.}~t\in I,
\end{equation}
for some $f\in S_{\partial F}(q)$. Taking the duality product both sides in \eqref{duality product1} we obtain
\begin{equation}\label{duality product2}
    \langle {}^C D_t^\alpha q(t), v^*\rangle=\langle Aq(t)+Bu(t)+Hf(t),v^*\rangle~~ ~\text{a.a.}~t\in I,~\text{for all}~v^*\in X^*.
\end{equation}
As $f(t)\in \partial F(t, q(t))$, from the definition of the Clarke subdifferential we obtain
\begin{equation}
	\langle f(t), v\rangle_{X^*,X}\le F^0(t, q(t);v), \forall v\in X.
\end{equation}
In particular, for $v=H^*v^*$ where $v^*\in X^*$ we obtain
\begin{equation}
	\langle f(t), H^*v^*\rangle_{X^*,X}\le F^0(t, q(t);H^*v^*),
\end{equation}
which further implies
\begin{equation}\label{HV4}
	\langle Hf(t), v^* \rangle_{X,X^*}\le F^0(t, q(t); H^*v^*).
\end{equation}

With the help of \eqref{HV4} we obtain from \eqref{duality product2} that $q\in C(I,X)$ satisfies
\begin{equation}
	\langle -^CD_t^{\alpha} q(t)+Aq(t)+Bu(t), v^*\rangle+F^0(t,q(t);H^*v^*)\ge 0, ~\text{for all}~v^*\in X^*,~\text{for}~t\in I.
\end{equation} 
In conclusion, according to the definition of the Clarke subdifferential, the solutions of the fractional differential inclusion presented in \eqref{PC1} is also a solution to the fractional evolution hemivariational inequality outlined in \eqref{HV1}. Consequently, our focus primarily is on addressing the inclusion presented in \eqref{PC1} to establish the approximate controllability of the fractional evolution hemivariational inequality \eqref{HV1}.

Following the paper \cite{MR1903295} we now recall the definition of mild solution of the fractional problem \eqref{NHGS}.
\begin{definition}
Fix $u(\cdot)\in L^2(I,U)$. A function $q\in C(I,X)$ is said to be a mild solution of \eqref{NHGS} if there exists $f\in S_F(q)$ such that 
    \begin{equation}
    q(t)=S_\alpha(t)x_0 + \int^{t}_{0}(t-s)^{\alpha-1}T_\alpha(t-s)f(s)ds+\int^{t}_{0}(t-s)^{\alpha-1}T_\alpha(t-s)Bu(s)ds,~t\in I,
    \end{equation}
    where
    \begin{equation}\label{Salpha}
        S_{\alpha}(t)=\int^{\infty}_{0}\xi_{\alpha}(\tau)T(t^{\alpha}\tau)d\tau; 
    \end{equation}
    \begin{equation}\label{Talpha}
        T_{\alpha}(t)=\alpha\int^{\infty}_{0}\tau \xi_{\alpha}(\tau)T(t^{\alpha}\tau)d\tau;
    \end{equation}
    \begin{equation}
         \xi_{\alpha}(\tau)=\frac{1}{\alpha}\tau^{-1-\frac{1}{\alpha}}\overline{w_{\alpha}}(\tau^{-\frac{1}{\alpha}})\geq 0;
    \end{equation}
    where
    \begin{equation}
        \overline{w_{\alpha}}(\tau)=\frac{1}{\pi}\sum^{\infty}_{n=1}(-1)^{n-1}\tau^{-\alpha-1}\frac{\Gamma(n\alpha+1)}{n!}sin(n\pi\alpha),~ \tau\in(0,\infty),
    \end{equation}
       and  $\xi_{\alpha}$ is a probability density function defined on $(0,\infty)$ , that is \begin{equation}
           \xi_{\alpha}(\tau)\geq0,~ \tau \in (0,\infty)~\text{and}~ \int^{\infty}_{0}\xi_{\alpha}(\tau)d\tau=1.
       \end{equation}
\end{definition}
The following Lemma \cite[Lemma 3.2-3.4]{MR2579471} characterizes some properties of the operators $S_{\alpha}(t)$ and $T_{\alpha}(t)$.
\begin{lemma}\label{Bound of Salpha and Talpha}
The operators $S_{\alpha}(t)$ and $T_{\alpha}(t)$ have the following properties:
\begin{itemize}
    \item[(i)] For any fixed $t\geq 0$ , the operators $S_{\alpha}(t)$ and $T_{\alpha}(t)$ are linear and bounded. Moreover 
    \begin{equation}
        \norm{S_\alpha(t)}_{\mathcal{L}(X)}\leq M 
    \end{equation}
    and
    \begin{equation}
        \norm{T_\alpha(t)}_{\mathcal{L}(X)}\leq \frac{M}{\Gamma(\alpha)}. 
    \end{equation}
    \item[(ii)] The operators $S_{\alpha}(t)$ and $T_{\alpha}(t)$ are strongly continuous for $t\geq 0$.
    \item[(iii)] If $T(t)$ is compact for $t>0$ , then the operators $S_{\alpha}(t)$ and $T_{\alpha}(t)$ are also compact for $t>0$.
\end{itemize}
\end{lemma}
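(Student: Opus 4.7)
The plan is to prove the three parts in order, all via the subordination formulas \eqref{Salpha}--\eqref{Talpha} together with two standard moment identities for the one-sided stable density $\xi_\alpha$, namely $\int_0^\infty \xi_\alpha(\tau)\,d\tau = 1$ (given in the excerpt) and $\int_0^\infty \tau\,\xi_\alpha(\tau)\,d\tau = 1/\Gamma(1+\alpha)$, which is a classical identity for the Mainardi-type Wright function $\overline{w_\alpha}$.

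Part (i) is immediate. Linearity is inherited from the linearity of each $T(t^\alpha\tau)$ together with linearity of the Bochner integral. For the norm bounds I would estimate $\norm{T(t^\alpha\tau)x} \le M\norm{x}$ pointwise inside the integrals, which using the two moment identities yields
\begin{equation*}
\norm{S_\alpha(t)x} \le M\norm{x}\int_0^\infty \xi_\alpha(\tau)\,d\tau = M\norm{x}, \qquad \norm{T_\alpha(t)x} \le \alpha M\norm{x}\int_0^\infty \tau\,\xi_\alpha(\tau)\,d\tau = \frac{M\norm{x}}{\Gamma(\alpha)}.
\end{equation*}

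Part (ii) follows from dominated convergence. For fixed $x\in X$ and $t_n\to t\ge 0$, strong continuity of $\{T(s)\}_{s\ge 0}$ gives pointwise convergence $T(t_n^\alpha\tau)x \to T(t^\alpha\tau)x$ for every $\tau>0$, while the integrands are majorised by $\xi_\alpha(\tau)M\norm{x}$ and $\alpha\tau\xi_\alpha(\tau)M\norm{x}$ respectively, both integrable by the moment identities used in (i). Hence $S_\alpha(t_n)x\to S_\alpha(t)x$ and $T_\alpha(t_n)x\to T_\alpha(t)x$.

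Part (iii) is the step where the real work lies; this will be the main obstacle, since the compactness of $T(s)$ is only pointwise in $s>0$ and does not transfer automatically to an integral over all of $(0,\infty)$. The standard route is a truncation-plus-approximation argument. For $t>0$ and $\varepsilon\in(0,1)$ I set
\begin{equation*}
S_\alpha^\varepsilon(t)x = \int_\varepsilon^\infty \xi_\alpha(\tau) T(t^\alpha\tau)x\,d\tau = T(t^\alpha\varepsilon)\int_\varepsilon^\infty \xi_\alpha(\tau) T\bigl(t^\alpha(\tau-\varepsilon)\bigr)x\,d\tau,
\end{equation*}
the second equality using the semigroup property $T(t^\alpha\tau) = T(t^\alpha\varepsilon)T(t^\alpha(\tau-\varepsilon))$ together with the fact that the bounded operator $T(t^\alpha\varepsilon)$ commutes with the Bochner integral. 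The remaining integral defines a bounded operator on $X$ of norm at most $M$, and $T(t^\alpha\varepsilon)$ is compact by hypothesis (T), so $S_\alpha^\varepsilon(t)$ is compact as a composition of a compact and a bounded operator. The uniform tail estimate
\begin{equation*}
\norm{S_\alpha(t) - S_\alpha^\varepsilon(t)}_{\mathcal{L}(X)} \le M\int_0^\varepsilon \xi_\alpha(\tau)\,d\tau \longrightarrow 0 \quad\text{as } \varepsilon\to 0^+
\end{equation*}
then exhibits $S_\alpha(t)$ as the operator-norm limit of the compact operators $S_\alpha^\varepsilon(t)$, hence compact. The same argument, with the weight $\alpha\tau\,\xi_\alpha(\tau)$ replacing $\xi_\alpha(\tau)$ and the tail estimate $\alpha M\int_0^\varepsilon \tau\,\xi_\alpha(\tau)\,d\tau\to 0$ (which holds by integrability), proves compactness of $T_\alpha(t)$.
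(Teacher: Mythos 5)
Your proof is correct: the moment identities for $\xi_\alpha$ give (i), dominated convergence gives (ii), and the truncation $\int_\varepsilon^\infty = T(t^\alpha\varepsilon)\circ(\text{bounded})$ plus the operator-norm tail estimate gives (iii). The paper itself offers no proof, citing \cite[Lemma 3.2-3.4]{MR2579471}, and your argument is essentially the standard subordination-plus-truncation proof given in that reference, so there is nothing to flag.
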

\subsection{Novelty and Idea of the Proof}
We provide detailed proof of the approximate controllability result in Theorem \ref{MR} given in Subsection 3.3. Here, we sketch the idea and novelty of the proof. We prove that the problem \eqref{PC1} is approximately controllable, that means for any $\epsilon>0$, and for any desired final state $z\in X$, we can find a mild solution $q\in C(I, X)$ of \eqref{PC1} corresponding to a suitable control $u\in L^2(I,U)$ such that 
\begin{equation}
	\norm{q(a)-z}<\epsilon.
\end{equation}
 Denote $G(t,x)=\partial F(t,x), t\in I, x\in X$. 
The usual approach to prove the approximate controllability result is the following: let $z\in X$ be the final state we want to achieve in time $a$. For each $\epsilon>0$,  define a multivalued map $\Lambda_{\epsilon}:C(I,X)\multimap C(I,X)$ as follows: if $y\in \Lambda_{\epsilon}(q), q\in C(I,X)$, then 
\begin{equation}
	y(t)=S_{\alpha}(t)x_0+\int_{0}^{t}(t-s)^{\alpha-1}T_{\alpha}(t-s)Hg(s)ds+\int_{0}^{t}(t-s)^{\alpha-1}T_{\alpha}(t-s)Bu(s)ds, ~t\in I,
\end{equation}   
where $g\in S_G(q)$, and $u\in L^2(I,U)$ is given by
\begin{equation}\label{co}
	u(t)=B^*T_{\alpha}^*(a-t)J\left((\epsilon I+R_{\alpha}(a)J)^{-1}\left(z-S_{\alpha}(a)x_0-\int_{0}^{a}(a-s)^{\alpha-1}T_{\alpha}(a-s)Hg(s)ds\right)\right).
\end{equation}
The mapping $J: X\multimap X^*$ denotes the duality mapping, which we can define by
\begin{equation}\label{4.5}
	J(y)=\{y^*\in X^*: \langle y, y^*\rangle=\norm{y}_X^2=\norm{y^*}^2_{X^*}\}, ~\forall y\in X.
\end{equation}
If $X$ is a super-reflexive Banach space, by Proposition \ref{Propos} (see Appendix), $X$ can be renormed in such a way that $X$ becomes uniformly smooth. Consequently, the mapping $J$ becomes single-valued and uniformly continuous on bounded subsets of $X$ (see \cite[Proposition 32.22]{MR1033498}). If the mapping $J$ is single-valued, we say $J$ is normalized duality mapping.

Observe that the fixed points of this multimap $\Lambda_{\epsilon}$ are the mild solutions of the problem \eqref{PC1}. It is worth noting that the set $\Lambda_{\epsilon}(q)$ fails to be convex due to the nonlinear nature of the duality mapping in reflexive or super-reflexive Banach spaces (see \cite[Proposition 3.14]{MR2504478} for duality mappings in $L^p([0,1])$ spaces). However, as far as we know, most of the multivalued fixed point theorems required to provide existence results, the fixed point map must have the condition that its values are convex. Hence, these fixed point theorems are inapplicable to deduce the existence of fixed points of the map $\Lambda_{\epsilon}$. To overcome this issue we introduce a new map $\Upsilon_{\epsilon}: L^{1}(I,X^*)\to C(I,X)$ as follows: for $g\in L^{1}(I,X^*)$ we define
\begin{equation}
	(\Upsilon_{\epsilon} g)(t)=S_{\alpha}(t)x_0+\int_{0}^{t}(t-s)^{\alpha-1}T_{\alpha}(t-s)Hg(s)ds+\int_{0}^{t}(t-s)^{\alpha-1}T_{\alpha}(t-s)Bu(s)ds, ~t\in I,
\end{equation}
where $u\in L^2(I,U)$ is given by \eqref{co}. We prove that if the space $X$ is super-reflexive, then $\Upsilon_{\epsilon}$ is continuous. We next define $S_G:C(I,X)\multimap L^{1}(I,X^*)$ as in \eqref{Selection} replacing $\partial F$ with $G$. Then define $\Gamma_{\epsilon}: L^{1}(I,X^*)\multimap L^1(I,X^*)$ as $\Gamma_{\epsilon}(g)=S_G(\Upsilon_{\epsilon}(g))$. From Lemma \ref{Lemma 2.14} it is clear that the multimap $\Gamma_{\epsilon}$ has convex values. Therefore, defining such a map we can tackle the convexity issue and establish the controllability result. This is the main novelty of this manuscript.
\subsection{Some basic results about Controllability}
We now turn into the topic of approximate controllability of the problem \eqref{PC1}. To analyze the approximate controllability of Problem \eqref{PC1}, we consider the fractional linear system \eqref{Linear} associated with Problem \eqref{PC1}.
With no claim of originality, we briefly introduce some well-known results about the approximate controllability of the fractional linear control system  \eqref{Linear}.
Considering the initial condition $x_0=0$, we define the bounded linear map
$W:L^2(I,U)\to X$ by
\begin{equation*}
	W(u)=\int_{0}^{a} (a-s)^{\alpha-1}T_{\alpha}(a-s)Bu(s)ds, ~u\in L^2(I,U).
\end{equation*}
In the above, the operator $T_{\alpha}(t), ~t\in I$ is given in \eqref{Talpha}. 
\begin{definition}\label{Def4.1}
	The system \eqref{Linear} is approximately controllable on $I$ if $\overline{\operatorname{Range}(W)}=X$, where overline denotes the closure in $X$.
\end{definition}
We now define the operator $R_{\alpha}(t), ~t\in I$ as follows:
\begin{equation}
    R_{\alpha}(t)x^*=\int_{0}^{t}(t-s)^{\alpha-1}T_{\alpha}(t-s)BB^*T_{\alpha}^*(a-s)x^*ds, ~x^*\in X^*.
\end{equation}
In particular, we define the controllability Gramian operator $R_{\alpha}(a):X^*\to X$ as follows:
\begin{equation}\label{Ralpha}
    R_{\alpha}(a)x^*=\int_{0}^{a}(a-s)^{\alpha-1}T_{\alpha}(a-s)BB^*T_{\alpha}^*(a-s)x^*ds, ~x^*\in X^*.
\end{equation}

As we assume $\frac{1}{2}<\alpha<1$, the operator $R_{\alpha}(t)$ is well defined. In fact, for $x^*\in X^*$ we estimate
\begin{align*}
    \norm{R_{\alpha}(t)x^*}_X\le &\int_{0}^{t}(t-s)^{\alpha-1}\norm{T_{\alpha}(t-s)BB^*T_{\alpha}^*(a-s)x^*}ds\\
    \le & \frac{M^2}{[\Gamma(\alpha)]^2}\norm{B}^2\norm{x^*}_{X^*}\int_{0}^{t}(t-s)^{\alpha-1}ds\\
    \le & \frac{M^2}{[\Gamma(\alpha)]^2}\norm{B}^2\norm{x^*}_{X^*}\frac{t^{\alpha}}{\alpha}.
\end{align*}
Hence we obtain
\begin{align}\label{R(a) bounded}
    \norm{R_{\alpha}(t)x^*}_X
    \le & \frac{M^2}{[\Gamma(\alpha)]^2}\norm{B}^2\norm{x^*}_{X^*}\frac{t^{\alpha}}{\alpha}, ~t\in I.
\end{align}
Moreover, it is easy to see that the operator $R_{\alpha}(t):X^*\to X$ is linear and from the estimate \eqref{R(a) bounded} we confirm that the operator $R_{\alpha}(t)$ is also bounded.

In the following Lemmas we deduce some more properties of the operator $R_{\alpha}(a)$.
\begin{lemma}
$R_{\alpha}(a):X^*\rightarrow X$ is symmetric operator i.e., $\langle x^*_1,R_{\alpha}(a)x^*_2 \rangle = \langle x^*_2 , R_{\alpha}(a)x^*_1\rangle $  for all $x^*_1 , x^*_2 \in X^*$.
\end{lemma}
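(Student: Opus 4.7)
The plan is to unpack the definition of $R_\alpha(a)$, move the duality pairing inside the Bochner integral using linearity and continuity of the functional $x_1^*$, and then repeatedly transfer operators to the other side via the adjoint property until the resulting expression lands in the Hilbert space $U$, where symmetry of the inner product is automatic. Reversing the process yields the desired identity.

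Concretely, I would start from
\begin{equation*}
\langle x_1^*, R_\alpha(a) x_2^* \rangle_{X^*,X} = \Bigl\langle x_1^*, \int_0^a (a-s)^{\alpha-1} T_\alpha(a-s) B B^* T_\alpha^*(a-s) x_2^*\, ds \Bigr\rangle_{X^*,X},
\end{equation*}
and use continuity of the bounded linear functional $x_1^*$ together with the basic commutation of bounded linear functionals with Bochner integrals to pull $x_1^*$ inside. Next I would apply the defining relations of the adjoints $T_\alpha^*(a-s)$ and $B^*$ (here $B: U \to X$ with $U$ a Hilbert space, so $B^*: X^* \to U^* \cong U$) step by step to rewrite the integrand as
\begin{equation*}
\bigl\langle B^* T_\alpha^*(a-s) x_1^*,\, B^* T_\alpha^*(a-s) x_2^* \bigr\rangle_U.
\end{equation*}

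Because $U$ is a Hilbert space, its inner product is symmetric (or, depending on the convention, conjugate-symmetric; since everything is real here, this is genuine symmetry), so the integrand equals
\begin{equation*}
\bigl\langle B^* T_\alpha^*(a-s) x_2^*,\, B^* T_\alpha^*(a-s) x_1^* \bigr\rangle_U.
\end{equation*}
Reversing the adjoint transfers and pulling the functional $x_2^*$ back out of the integral yields $\langle x_2^*, R_\alpha(a) x_1^* \rangle_{X^*,X}$, completing the argument.

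I do not foresee a genuine obstacle: boundedness of $T_\alpha(t)$ from Lemma \ref{Bound of Salpha and Talpha} and the bound \eqref{R(a) bounded} already justify that $R_\alpha(a)$ is well defined, and they ensure the integrand is Bochner integrable so that pulling the linear functional in and out of the integral is legitimate. The only mildly delicate point is being careful with the identification $U^* \cong U$ via the Riesz representation when using $B^*$, but this is standard and causes no trouble.
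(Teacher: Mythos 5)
Your proposal is correct and follows essentially the same route as the paper: pull the functional inside the Bochner integral, transfer the operator $T_\alpha(a-s)BB^*T_\alpha^*(a-s)$ across the duality pairing, and pull the functional back out. The only difference is that you spell out the intermediate step via the inner product $\langle B^*T_\alpha^*(a-s)x_1^*, B^*T_\alpha^*(a-s)x_2^*\rangle_U$ explicitly, which the paper asserts in one line (and uses explicitly only in the subsequent positivity lemma).
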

\begin{proof}
Let $x_1^*, x_2^*\in X^*$. We estimate
\begin{align*}
\langle x^*_1,R_{\alpha}(a)x^*_2 \rangle _{X^*,X}=& \left\langle x^*_1 , \int ^{a}_{0}(a-s)^{\alpha-1}T_{\alpha}(a-s)BB^*T^*_{\alpha}(a-s)x^*_2ds\right\rangle \\
 =&\int ^{a}_{0}(a-s)^{\alpha-1}\langle x^*_1,T_{\alpha}(a-s)BB^*T^*_{\alpha}(a-s)x^*_2
 \rangle ds \\    
 =&\int ^{a}_{0}(a-s)^{\alpha-1}\langle T_{\alpha}(a-s)BB^*T^*_{\alpha}(a-s)x^*_1,x^*_2
 \rangle ds\\
 =&\left\langle \int ^{a}_{0}(a-s)^{\alpha-1}T_{\alpha}(a-s)BB^*T^*_{\alpha}(a-s)x^*_1ds, x^*_2\right\rangle.
\end{align*}
Using the definition of the operator $R_{\alpha}(a)$ we obtain
\begin{align*}
\langle x^*_1,R_{\alpha}(a)x^*_2 \rangle _{X^*,X}
 = & \langle x_2^*, R_{\alpha}(a)x_1^*\rangle_{X^*,X}.
\end{align*}
Hence, we conclude that the linear operator $R_{\alpha}(a)$ is a symmetric operator.
\end{proof}

The following lemma is of crucial importance.
 \begin{lemma}\label{Lem3.1}\cite[Lemma 2.2]{MR2046377}
 	Suppose that $X$ is a separable reflexive Banach space. Then the map $\epsilon I+R_{\alpha}(a)J$ is invertible for every $\epsilon>0$, and satisfies the following estimates
 	\begin{equation*}
 		\norm{\epsilon(\epsilon I+R_{\alpha}(a)J)^{-1}y}\le\norm{y},
	\end{equation*}
	for all $y\in X$. In the above, $J: X\to X^*$ is the normalized duality mapping.
 \end{lemma}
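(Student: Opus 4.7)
The plan is to decouple the easy norm estimate from the harder invertibility claim, and to handle invertibility by a variational argument on $X^*$.

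The key structural fact I would record first is that $R_\alpha(a)$ is positive in the sense that
\[
\langle x^*, R_\alpha(a) x^* \rangle_{X^*,X} = \int_0^a (a-s)^{\alpha-1} \|B^* T_\alpha^*(a-s) x^*\|_U^2 \, ds \ge 0 \quad \text{for all } x^* \in X^*,
\]
which is a direct computation from the definition of $R_\alpha(a)$ together with the adjoint relation $\langle BB^* v, v\rangle = \|B^* v\|_U^2$. Combined with the symmetry established in the preceding lemma, this tells us $R_\alpha(a)$ behaves like a positive semi-definite operator.

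Granting invertibility, the norm estimate is immediate: if $\epsilon x + R_\alpha(a) J x = y$, then pairing with $Jx \in X^*$ yields
\[
\epsilon \|x\|^2 + \langle R_\alpha(a) Jx, Jx\rangle = \langle y, Jx\rangle \le \|y\|\, \|x\|,
\]
and positivity of the second term on the left forces $\epsilon \|x\| \le \|y\|$. This same pairing applied to $\epsilon (x_1 - x_2) + R_\alpha(a)(Jx_1 - Jx_2) = 0$, combined with the monotonicity bound $\langle x_1 - x_2, Jx_1 - Jx_2\rangle \ge (\|x_1\| - \|x_2\|)^2$ of the duality mapping, also yields injectivity of $\epsilon I + R_\alpha(a) J$.

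The main obstacle is surjectivity onto $X$: since $J$ is genuinely nonlinear outside the Hilbert setting, one cannot simply invert a bounded linear operator. The approach I would take is variational. For fixed $y \in X$, introduce on $X^*$ the functional
\[
\psi(x^*) = \tfrac{\epsilon}{2}\|x^*\|_{X^*}^2 + \tfrac{1}{2}\langle x^*, R_\alpha(a) x^*\rangle - \langle x^*, y\rangle.
\]
This $\psi$ is convex (by positivity and symmetry of $R_\alpha(a)$ plus convexity of $\|\cdot\|^2$), continuous, and coercive, since the quadratic $\tfrac{\epsilon}{2}\|x^*\|^2$ dominates the linear term $\langle x^*, y\rangle$. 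Reflexivity of $X^*$ together with weak lower semi-continuity of convex continuous functionals delivers a minimizer $\bar x^* \in X^*$ via the direct method. Computing the G\^ateaux derivative at $\bar x^*$, and using that the derivative of $\tfrac{1}{2}\|\cdot\|_{X^*}^2$ is the duality mapping $J^*: X^* \to X^{**} \cong X$ (which coincides with $J^{-1}$ after renorming $X$ so that both $X$ and $X^*$ are strictly convex, a standard consequence of reflexivity), one arrives at the Euler--Lagrange equation
\[
\epsilon J^{-1}(\bar x^*) + R_\alpha(a) \bar x^* = y.
\]
Setting $\bar x = J^{-1}(\bar x^*)$, equivalently $\bar x^* = J \bar x$, gives $\epsilon \bar x + R_\alpha(a) J \bar x = y$, proving surjectivity. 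Uniqueness is already covered by the injectivity argument above (or alternatively by strict convexity of $\psi$), so $(\epsilon I + R_\alpha(a)J)^{-1}$ is a well-defined single-valued map on $X$ and the asserted bound follows.
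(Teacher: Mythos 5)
Your argument is essentially sound, but note that the paper does not prove this lemma at all: it is quoted verbatim from the cited reference (Mahmudov's Lemma 2.2), where the standard route is monotone-operator theoretic, viewing $x^*\mapsto \epsilon J^{-1}x^*+R_{\alpha}(a)x^*$ as a monotone, coercive, demicontinuous map from $X^*$ to $X$ and invoking Minty--Browder surjectivity. Your variational scheme is the potential-theoretic twin of that argument: your functional $\psi$ is exactly a primitive of that monotone map, so the direct method plus the Euler--Lagrange equation recovers the same conclusion, and your a priori estimate obtained by pairing with $Jx$ is the same computation used in the literature. Two caveats. First, the step ``$\langle x_1-x_2,Jx_1-Jx_2\rangle\ge(\norm{x_1}-\norm{x_2})^2$ yields injectivity'' is not enough by itself: vanishing of that pairing only forces equality of norms, and you need strict monotonicity of $J$, i.e.\ strict convexity of $X$, to conclude $x_1=x_2$; your fallback via strict convexity of $\psi$ does close this, but it likewise needs the strictly convex renorming. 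Second, the renorming (so that $X$ and $X^*$ are strictly convex, hence $J$ and $J_*=J^{-1}$ are single-valued and the norm of $X^*$ is G\^ateaux differentiable) changes both $J$ and the norm in the stated estimate; this is consistent with the paper, which explicitly works with the renormed space in the remark following the lemma, but it should be said explicitly in your proof.
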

 Note that if $X$ is a reflexive Banach space, then by \cite{MR222610}, $X$ can be renormed such that $X$ and $X^*$ become strictly convex. As a consequence, the duality mapping $J$  becomes single-valued.

We now give a criterion for the approximate controllability of the system \eqref{Linear} in terms of the adjoint of the control operator $B$.
\begin{theorem}\label{ACOF}
The following statements are equivalent :
\begin{itemize}
    \item[(i)] The fractional linear control system \eqref{Linear} is approximately controllable on $I=[0,a]$.
    \item[(ii)] For $x^* \in X^*$, we have \begin{equation*}
		B^*T_\alpha^*(a-t)x^*=0, ~\text{for all }~t\in I \Rightarrow x^*=0.
	\end{equation*}
 \item[(iii)] $\forall x \in X$ , $\epsilon(\epsilon I + R_{\alpha}(a)J)^{-1}(x)\rightarrow 0$ as $\epsilon \rightarrow 0^+$ in strong topology.
\end{itemize}
\end{theorem}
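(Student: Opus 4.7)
The plan is to prove the cycle of equivalences by first establishing (i) $\Leftrightarrow$ (ii) by a standard annihilator-of-range argument, then (iii) $\Rightarrow$ (i) by a direct approximating-control construction, and finally (ii) $\Rightarrow$ (iii) via Lemma \ref{Lem3.1}, weak compactness from reflexivity, and the symmetry of $R_\alpha(a)$. The last step is the main obstacle because in a Banach space a weakly convergent subsequence does not automatically converge strongly, so one must leverage the super-reflexivity (hence uniform smoothness and single-valued continuous $J$) in a nontrivial way.

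For (i) $\Leftrightarrow$ (ii), I would use the Hahn-Banach theorem: since $X$ is reflexive, $\overline{\operatorname{Range}(W)}=X$ iff $\operatorname{Range}(W)^\perp=\{0\}$. For any $u\in L^2(I,U)$ and $x^*\in X^*$, a Fubini/adjoint computation yields
\begin{equation*}
\langle W(u),x^*\rangle_{X,X^*}=\int_0^a \bigl\langle u(s),\,(a-s)^{\alpha-1}B^*T_\alpha^*(a-s)x^*\bigr\rangle_{U,U^*}\,ds.
\end{equation*}
Thus $x^*\in\operatorname{Range}(W)^\perp$ iff $s\mapsto (a-s)^{\alpha-1}B^*T_\alpha^*(a-s)x^*$ vanishes a.e. on $I$, which by the strong continuity of $T_\alpha^*$ on $(0,a]$ is equivalent to $B^*T_\alpha^*(a-t)x^*=0$ for all $t\in I$.

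For (iii) $\Rightarrow$ (i), fix $x\in X$ and set $z_\epsilon:=(\epsilon I+R_\alpha(a)J)^{-1}x$ and $h_\epsilon:=\epsilon z_\epsilon$, so that $h_\epsilon+R_\alpha(a)Jz_\epsilon=x$. Writing out $R_\alpha(a)Jz_\epsilon$ using the definition \eqref{Ralpha}, I identify
\begin{equation*}
x-h_\epsilon=R_\alpha(a)Jz_\epsilon=W(u_\epsilon),\qquad u_\epsilon(s):=B^*T_\alpha^*(a-s)Jz_\epsilon\in L^2(I,U).
\end{equation*}
Since (iii) forces $h_\epsilon\to 0$ strongly as $\epsilon\to 0^+$, we get $x=\lim_{\epsilon\to 0^+} W(u_\epsilon)\in\overline{\operatorname{Range}(W)}$, hence (i).

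The hard direction (ii) $\Rightarrow$ (iii) proceeds as follows. By Lemma \ref{Lem3.1} the family $\{h_\epsilon\}_{\epsilon>0}$ is bounded in $X$, and by reflexivity every sequence $\epsilon_n\to 0^+$ admits a subsequence with $h_{\epsilon_n}\rightharpoonup h$ weakly in $X$. Testing the identity $h_\epsilon+R_\alpha(a)Jz_\epsilon=x$ against an arbitrary $y^*\in X^*$ and invoking the symmetry of $R_\alpha(a)$ proven above, I would rewrite
\begin{equation*}
\langle x-h_\epsilon,\,y^*\rangle=\langle R_\alpha(a)Jz_\epsilon,\,y^*\rangle=\int_0^a (a-s)^{\alpha-1}\bigl\langle Jz_\epsilon,\,T_\alpha(a-s)BB^*T_\alpha^*(a-s)y^*\bigr\rangle\,ds,
\end{equation*}
and pass to the limit. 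The key observation is that if one chooses $y^*$ satisfying the vanishing condition in (ii), the right-hand side is identically zero, which forces $\langle x-h,y^*\rangle=0$ for every such $y^*$; combined with the annihilator computation from the first step this pins down $h=x-x=0$ in the weak sense, i.e.\ $h_{\epsilon_n}\rightharpoonup 0$. The final and most delicate step is to upgrade this weak-to-zero convergence of an arbitrary subsequence into strong convergence $\|h_\epsilon\|\to 0$. I would do this by exploiting the uniform smoothness of $X$ guaranteed by super-reflexivity (Proposition \ref{Propos}): the normalized duality map $J$ is then single-valued and uniformly continuous on bounded sets, so one can combine the norm identity $\|h_\epsilon\|^2=\langle h_\epsilon,Jh_\epsilon\rangle$ with the dual statement $h_\epsilon=x-R_\alpha(a)Jz_\epsilon$ and the already-proven weak convergence to extract a Cauchy-type norm estimate that collapses $\|h_\epsilon\|$ to zero. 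This conversion of weak to strong convergence is the main technical obstacle, and is precisely where the super-reflexivity hypothesis is essential.
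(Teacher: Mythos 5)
Your equivalence (i) $\Leftrightarrow$ (ii) and the implication (iii) $\Rightarrow$ (i) are fine (the paper itself states this theorem without proof, as a known result in the spirit of the cited Mahmudov reference, so the comparison is with the standard argument). The genuine gap is in (ii) $\Rightarrow$ (iii), at the step you call the ``key observation.'' Under hypothesis (ii), the set of functionals $y^*$ satisfying $B^*T_\alpha^*(a-t)y^*=0$ for all $t\in I$ is exactly $\{0\}$; that is the whole content of (ii). So ``testing against every such $y^*$'' tells you only that $\langle x-h,0\rangle=0$, which is vacuous and does not pin down the weak limit $h$. The annihilator computation from your first step identifies this set with $\operatorname{Range}(W)^\perp$, which again is $\{0\}$ under (ii); it yields (i), but it gives no information about $h_{\epsilon_n}$. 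Moreover, even if you had $h_{\epsilon_n}\rightharpoonup 0$, the final ``upgrade'' to strong convergence is not an argument: uniform smoothness and single-valuedness of $J$ do not convert weak convergence into strong convergence (that would need a Radon--Riesz type ingredient, i.e.\ control of $\|h_\epsilon\|$, which is precisely what is missing), and note also that $Jz_\epsilon$ need not stay bounded as $\epsilon\to 0^+$, so passing to the limit in $\langle Jz_\epsilon, R_\alpha(a)y^*\rangle$ is itself problematic.

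The standard way to close this is quantitative and does not use weak compactness at all: prove (i) $\Rightarrow$ (iii). Pair the resolvent identity $\epsilon z_\epsilon+R_\alpha(a)Jz_\epsilon=x$ with $Jz_\epsilon$ and use the computation from the positivity lemma to get
\begin{equation*}
\epsilon\norm{z_\epsilon}^2+\int_0^a (a-s)^{\alpha-1}\norm{B^*T_\alpha^*(a-s)Jz_\epsilon}_U^2\,ds=\langle x,Jz_\epsilon\rangle .
\end{equation*}
Given $\delta>0$, use density of $\operatorname{Range}(W)$ to pick a (bounded, say) control $u$ with $\norm{x-Wu}<\delta$, split $\langle x,Jz_\epsilon\rangle=\langle x-Wu,Jz_\epsilon\rangle+\langle Wu,Jz_\epsilon\rangle$, estimate the second term by the weighted Cauchy--Schwarz and Young inequalities so that the Gramian term on the left absorbs it, and conclude $\epsilon\norm{z_\epsilon}^2\le\delta\norm{z_\epsilon}+C_u$. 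Multiplying by $\epsilon$ and using $\norm{h_\epsilon}\le\norm{x}$ from Lemma \ref{Lem3.1} gives $\norm{h_\epsilon}^2\le\delta\norm{x}+\epsilon C_u$, hence $\limsup_{\epsilon\to0^+}\norm{h_\epsilon}^2\le\delta\norm{x}$ for every $\delta>0$, which is exactly (iii). This is where the density hypothesis actually enters; your sketch never uses it beyond the vacuous statement above, so as written the hard direction does not go through.
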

\begin{lemma}
Suppose the fractional linear control sysetm \eqref{Linear} is approximately controllable in $I$. Then the operator $R_{\alpha}(a):X^*\rightarrow X$ is a positive operator.
\end{lemma}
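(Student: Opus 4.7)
The plan is to compute the scalar quantity $\langle x^*, R_\alpha(a) x^* \rangle_{X^*, X}$ directly and show that it equals a non-negative integral whose vanishing forces $x^* = 0$ via Theorem \ref{ACOF}. Positivity will be interpreted in the strict sense, i.e., $\langle x^*, R_\alpha(a) x^* \rangle > 0$ whenever $x^* \neq 0$.

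First I would push the duality pairing inside the defining integral of $R_\alpha(a)$, which is justified since $(a-s)^{\alpha-1} T_\alpha(a-s) B B^* T_\alpha^*(a-s) x^*$ is Bochner integrable by the estimate \eqref{R(a) bounded}. For each $s$, I apply the adjoint relation $\langle x^*, T_\alpha(a-s) B v \rangle_{X^*,X} = \langle B^* T_\alpha^*(a-s) x^*, v \rangle_{U^*,U}$ with $v = B^* T_\alpha^*(a-s) x^*$. Since $U$ is a Hilbert space identified with its dual, this yields
\begin{equation*}
\langle x^*, R_\alpha(a) x^* \rangle_{X^*,X} = \int_0^a (a-s)^{\alpha-1}\, \|B^* T_\alpha^*(a-s) x^*\|_U^2 \, ds.
\end{equation*}
Because $\alpha - 1 > -1$ and the weight $(a-s)^{\alpha-1}$ is non-negative on $[0,a)$ with the integrand non-negative, this immediately gives $\langle x^*, R_\alpha(a) x^* \rangle \geq 0$ for every $x^* \in X^*$.

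For strict positivity, suppose $\langle x^*, R_\alpha(a) x^* \rangle = 0$. Then the non-negative integrand vanishes almost everywhere on $[0,a]$, so $B^* T_\alpha^*(a-s) x^* = 0$ for a.e. $s \in I$. To promote this to pointwise vanishing, I would invoke strong continuity of $T_\alpha(t)$ (Lemma \ref{Bound of Salpha and Talpha}(ii)) combined with reflexivity of $X$ (which is part of the standing hypothesis, via super-reflexivity), so that the adjoint family $T_\alpha^*(t)$ is also strongly continuous; consequently $t \mapsto B^* T_\alpha^*(a-t) x^*$ is continuous on $I$, and vanishing almost everywhere forces vanishing everywhere on $I$. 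At this point the standing hypothesis that \eqref{Linear} is approximately controllable, together with the equivalence (i) $\Leftrightarrow$ (ii) of Theorem \ref{ACOF}, yields $x^* = 0$.

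The only technically delicate step is the continuity of $s \mapsto B^* T_\alpha^*(a-s) x^*$ used to pass from a.e. to pointwise vanishing; everywhere else the argument is essentially an adjoint-and-rearrange calculation. If strong continuity of the adjoint semigroup is considered too strong an appeal, an alternative route is to observe that Theorem \ref{ACOF}(ii) can equivalently be read as ``$B^* T_\alpha^*(a-\cdot) x^* = 0$ in $L^2(I, U)$ implies $x^* = 0$,'' in which case the a.e. conclusion already suffices with no further continuity argument needed.
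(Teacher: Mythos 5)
Your proposal is correct and follows essentially the same route as the paper: rewrite $\langle x^*, R_\alpha(a)x^*\rangle$ as $\int_0^a (a-s)^{\alpha-1}\norm{B^*T_\alpha^*(a-s)x^*}^2\,ds \ge 0$ and then use the equivalence in Theorem \ref{ACOF} together with approximate controllability of \eqref{Linear} to conclude that the form vanishes only at $x^*=0$. The only difference is that you spell out the a.e.-to-everywhere vanishing step (via continuity, or the $L^2$ reading of condition (ii)), which the paper leaves implicit.
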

\begin{proof}
For $x^*\in X^*$ we estimate
\begin{align*}
        \langle x^* , R_{\alpha}(a)x^*\rangle = &\left\langle x^* , \int ^{a}_{0}(a-s)^{\alpha-1}T_{\alpha}(a-s)BB^*T^*_{\alpha}(a-s)x^*ds\right\rangle\\
        = & \int ^{a}_{0}(a-s)^{\alpha-1}\langle x^* , T_{\alpha}(a-s)BB^*T^*_{\alpha}(a-s)x^*\rangle ds\\
        = & \int ^{a}_{0}(a-s)^{\alpha-1}\langle B^*T^*_{\alpha}(a-s)x^* , B^*T^*_{\alpha}(a-s)x^*\rangle ds\\
        = & \int ^{a}_{0}(a-s)^{\alpha-1}\norm{B^*T^*_{\alpha}(a-s)x^*}^2ds\geq 0 
        \end{align*}
From this, we conclude that the operator $R_{\alpha}(a)$ is a non negative operator. 

Moreover, by means of Theorem \ref{ACOF}, we confirm that $\langle R_{\alpha}(a)x^*, x^*\rangle=0$ implies $x^*=0$. Hence, the operator $R_{\alpha}(a)$ is a positive operator.
\end{proof}

The following simple observation is crucial, and it is immediate from the positivity of the operator $R_{\alpha}(a)$.

\begin{lemma}\label{Lem4.5}
	Suppose the system \eqref{Linear} is approximately controllable on $I$; then $R_{\alpha}(a)$ is injective.
\end{lemma}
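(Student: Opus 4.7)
The plan is to read off the injectivity directly from the positivity of $R_{\alpha}(a)$ established in the previous lemma, combined with the characterization of approximate controllability in Theorem \ref{ACOF}(ii). The logical chain is short: from $R_{\alpha}(a)x^{*}=0$, I would pair with $x^{*}$ to conclude $\langle R_{\alpha}(a)x^{*},x^{*}\rangle_{X,X^{*}}=0$, then unwind the integral identity derived in the proof of the previous lemma to force $B^{*}T_{\alpha}^{*}(a-s)x^{*}=0$, and finally apply Theorem \ref{ACOF}(ii) to obtain $x^{*}=0$.

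More concretely, the first step is to suppose $x^{*}\in X^{*}$ satisfies $R_{\alpha}(a)x^{*}=0$. Then trivially $\langle x^{*},R_{\alpha}(a)x^{*}\rangle=0$. By the identity established in the proof of the preceding lemma,
\begin{equation*}
\langle x^{*},R_{\alpha}(a)x^{*}\rangle=\int_{0}^{a}(a-s)^{\alpha-1}\norm{B^{*}T_{\alpha}^{*}(a-s)x^{*}}^{2}\,ds=0.
\end{equation*}
Since $(a-s)^{\alpha-1}>0$ on $[0,a)$, this forces $B^{*}T_{\alpha}^{*}(a-s)x^{*}=0$ for almost every $s\in I$.

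The one subtle point, which I flag as the only real obstacle, is that Theorem \ref{ACOF}(ii) requires the vanishing of $B^{*}T_{\alpha}^{*}(a-t)x^{*}$ for every $t\in I$, not merely almost every $t$. I would upgrade the a.e. equality to an everywhere equality by invoking the strong continuity of $T_{\alpha}(t)$ for $t\ge 0$ from Lemma \ref{Bound of Salpha and Talpha}(ii), together with boundedness of $B^{*}$: the map $t\mapsto B^{*}T_{\alpha}^{*}(a-t)x^{*}$ is then strongly continuous on $I$, and a continuous function vanishing a.e.\ vanishes everywhere. Once this is in hand, Theorem \ref{ACOF}(ii) gives $x^{*}=0$, proving $R_{\alpha}(a)$ is injective.
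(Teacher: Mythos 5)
Your argument is exactly the paper's: the lemma is stated there as an immediate consequence of the positivity of $R_{\alpha}(a)$, whose proof already contains the identity $\langle x^{*},R_{\alpha}(a)x^{*}\rangle=\int_{0}^{a}(a-s)^{\alpha-1}\norm{B^{*}T_{\alpha}^{*}(a-s)x^{*}}^{2}ds$ and the appeal to Theorem \ref{ACOF}(ii). You merely unwind that chain explicitly (including the harmless a.e.-to-everywhere upgrade via continuity), so the proposal is correct and takes essentially the same route.
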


The following Theorem is useful.
\begin{theorem}\cite[Lemma 4.4]{MR4104454}\label{Thm6.0}
	Let $U$ be a separable Hilbert space and $X$ be a Banach space with dual $X^*$. Assume that the system 
	\begin{equation}
		\begin{cases}
			^CD^{\alpha}_{t}q(t)=Aq(t)+Bu(t), t\in I;\\
			q(0)=x_0
		\end{cases}
	\end{equation} 
	is approximately controllable on $I$. Additionally assume that there exists a relatively compact set $K\subset X$, and $x_{\epsilon} \in X$ such that
	\begin{equation}
		x_{\epsilon}\in \epsilon(\epsilon I+R_{\alpha}(a)J)^{-1}(K), \epsilon>0,
	\end{equation}
	where $R_{\alpha}(a)$ is defined in \eqref{Ralpha} and $J:X\to X^*$ is the normalized duality mapping.
	Then there is a sequence $\epsilon_n\to 0$ as $n\to \infty$ such that $x_{\epsilon_n}\to 0$ as $n\to \infty$.
\end{theorem}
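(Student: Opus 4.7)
The plan is to combine the relative compactness of $K$ with the pointwise-to-zero convergence supplied by Theorem \ref{ACOF}(iii). For each $\epsilon>0$, the hypothesis lets me select $k_\epsilon\in K$ with $x_\epsilon=\epsilon(\epsilon I+R_\alpha(a)J)^{-1}(k_\epsilon)$; writing $u_\epsilon:=(\epsilon I+R_\alpha(a)J)^{-1}(k_\epsilon)$, this reads $\epsilon u_\epsilon+R_\alpha(a)J(u_\epsilon)=k_\epsilon$ with $x_\epsilon=\epsilon u_\epsilon$. Taking any sequence $\tilde\epsilon_n\to 0^+$, by relative compactness of $K$ I extract a subsequence (still denoted $\epsilon_n$) for which $k_{\epsilon_n}\to k^*$ strongly in $X$ for some $k^*\in\overline{K}$.

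The strategy is the decomposition
\[
x_{\epsilon_n}=\bigl[F_{\epsilon_n}(k_{\epsilon_n})-F_{\epsilon_n}(k^*)\bigr]+F_{\epsilon_n}(k^*),\qquad F_\epsilon(y):=\epsilon(\epsilon I+R_\alpha(a)J)^{-1}(y).
\]
By Theorem \ref{ACOF}(iii) applied at the single point $k^*$, the second summand tends to $0$ in $X$, so it suffices to show the difference term also tends to $0$, possibly along a further subsequence.

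For the difference, set $w_n:=(\epsilon_n I+R_\alpha(a)J)^{-1}(k^*)$ and subtract the two defining equations to obtain
\[
\epsilon_n(u_n-w_n)+R_\alpha(a)\bigl(J(u_n)-J(w_n)\bigr)=k_{\epsilon_n}-k^*.
\]
Pairing this identity with $J(u_n)-J(w_n)\in X^*$, the monotonicity of $J$ makes the first left-hand term nonnegative and the positivity of $R_\alpha(a)$ makes the second nonnegative, while the right-hand side is controlled by $\|k_{\epsilon_n}-k^*\|\to 0$. Lemma \ref{Lem3.1} delivers the uniform bounds $\|x_{\epsilon_n}\|,\|F_{\epsilon_n}(k^*)\|\le \sup_{k\in\overline{K}}\|k\|$, so $\{\epsilon_n(u_n-w_n)\}$ is bounded in $X$, and reflexivity yields a weakly convergent further subsequence.

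The main obstacle is that, unlike the Hilbert-space situation where $J$ reduces to the Riesz isomorphism and $F_\epsilon$ becomes an outright $1$-Lipschitz map, in a general (super-reflexive) Banach space $J$ is nonlinear, so the naive estimate $\|F_\epsilon(y_1)-F_\epsilon(y_2)\|\le\|y_1-y_2\|$ is not available. To close the argument and upgrade the weak limit to strong convergence of $\epsilon_n(u_n-w_n)$ to zero, I would exploit super-reflexivity to renorm $X$ as uniformly smooth (Proposition \ref{Propos}), whereupon $J$ is single-valued and uniformly continuous on bounded sets, together with the compactness properties of the integral operator $R_\alpha(a)$ inherited from the compactness of $T_\alpha(t)$ for $t>0$ (Lemma \ref{Bound of Salpha and Talpha}(iii)); these ingredients should allow one to pass to the limit in the subtracted equation and conclude $x_{\epsilon_n}-F_{\epsilon_n}(k^*)\to 0$ strongly, completing the proof.
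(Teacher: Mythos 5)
You should first note that the paper itself contains no proof of Theorem \ref{Thm6.0}: it is imported verbatim from \cite[Lemma 4.4]{MR4104454} and used as a black box, so there is no in-paper argument to compare against. Judged on its own terms, your attempt is incomplete, and the incompleteness sits exactly at the step that is the entire content of the lemma. The selection of $k_\epsilon\in K$, the extraction of a subsequence with $k_{\epsilon_n}\to k^*$, and the convergence $F_{\epsilon_n}(k^*)\to 0$ via Theorem \ref{ACOF}(iii) are correct but routine; everything hinges on showing $F_{\epsilon_n}(k_{\epsilon_n})-F_{\epsilon_n}(k^*)\to 0$, and your argument for that does not close.

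Concretely: pairing $\epsilon_n(u_n-w_n)+R_\alpha(a)\bigl(J(u_n)-J(w_n)\bigr)=k_{\epsilon_n}-k^*$ with $J(u_n)-J(w_n)$ does make both left-hand terms nonnegative, but the right-hand side is only bounded by $\norm{k_{\epsilon_n}-k^*}\,\norm{J(u_n)-J(w_n)}_{X^*}$, and $\norm{J(u_n)-J(w_n)}_{X^*}\le \norm{u_n}+\norm{w_n}=O(1/\epsilon_n)$, since Lemma \ref{Lem3.1} controls $\epsilon_n\norm{u_n}$, not $\norm{u_n}$. Because $k_\epsilon$ varies with $\epsilon$, you cannot arrange any rate of $\norm{k_{\epsilon_n}-k^*}\to 0$ relative to $\epsilon_n$, so this product need not vanish. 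Even if it did, nonnegativity of $\epsilon_n\langle u_n-w_n,J(u_n)-J(w_n)\rangle$ yields no lower bound in terms of $\norm{\epsilon_n(u_n-w_n)}$: in a Hilbert space that pairing equals $\norm{u_n-w_n}^2$ and $\norm{Ju_n-Jw_n}=\norm{u_n-w_n}$, which is precisely what makes $F_\epsilon$ $1$-Lipschitz uniformly in $\epsilon$, whereas in a (super-)reflexive Banach space no such uniform estimate holds, and the quantitative monotonicity/continuity moduli of $J$ supplied by the renorming of Proposition \ref{Propos} and Lemma \ref{Con} degenerate on the sets where $u_n,w_n$ live, since these vectors have norms of order $1/\epsilon_n$; for the same reason "uniform continuity of $J$ on bounded sets" is not applicable. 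The appeal to compactness of $R_\alpha(a)$ is also unsubstantiated (it is not established in the paper, and you do not say how it would enter the limit passage), and the weak convergence of $\epsilon_n(u_n-w_n)$ is never upgraded to strong convergence. So your closing paragraph names the obstacle rather than overcoming it: what is actually proved is boundedness of $x_{\epsilon_n}$ and $F_{\epsilon_n}(k^*)\to 0$, and the lemma still has to be quoted from \cite{MR4104454} or proved by supplying a genuine uniform-modulus/equicontinuity argument for the family $\{\epsilon(\epsilon I+R_\alpha(a)J)^{-1}\}_{\epsilon>0}$ on compact subsets of $X$.
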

In the following Lemma, we demonstrate the continuity of the operator $(\epsilon I+R_{\alpha}(a)J)^{-1}: X\to X$. The proof can be found in the paper \cite{MR4336468}.
\begin{lemma}\label{Con}
	Let $X$ be a super-reflexive Banach space. The operator $(\epsilon I+R_{\alpha}(a)J)^{-1}: X\to X$ is uniformly continuous in every bounded subset of $X$, where $R_{\alpha}(a):X^*\to X$ is as in \eqref{Ralpha} and $J:X\to X^*$ is the normalized duality mapping.  
\end{lemma}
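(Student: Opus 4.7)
The plan is to establish uniform continuity of the inverse on bounded subsets by combining the monotonicity of the duality mapping $J$ with the nonnegativity of $R_{\alpha}(a)$, after first renorming $X$ to exploit its super-reflexive structure.

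First, by Proposition \ref{Propos}, super-reflexivity lets us equip $X$ with an equivalent norm under which $X$ is uniformly smooth, so that the normalized duality mapping $J:X\to X^*$ is single-valued and uniformly continuous on every bounded subset of $X$. Fix a bounded set $B\subset X$; by Lemma \ref{Lem3.1}, $(\epsilon I+R_{\alpha}(a)J)^{-1}(B)$ sits inside the ball of radius $\sup_{y\in B}\|y\|/\epsilon$, a bounded set $B'\subset X$. So it suffices to control $\|x_1-x_2\|$ in terms of $\|y_1-y_2\|$ for $x_i$ ranging over $B'$.

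Next, for $y_1,y_2\in B$ with $x_i=(\epsilon I+R_{\alpha}(a)J)^{-1}(y_i)$, I would start from the identity
\begin{equation*}
\epsilon(x_1-x_2)=(y_1-y_2)-R_{\alpha}(a)\bigl(J(x_1)-J(x_2)\bigr)
\end{equation*}
and pair both sides with $J(x_1)-J(x_2)\in X^*$. Monotonicity of $J$ yields $\langle x_1-x_2,J(x_1)-J(x_2)\rangle\ge 0$, and the nonnegativity of $R_{\alpha}(a)$ already established gives $\langle R_{\alpha}(a)(J(x_1)-J(x_2)),J(x_1)-J(x_2)\rangle\ge 0$. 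Combining with the duality inequality $|\langle y_1-y_2,J(x_1)-J(x_2)\rangle|\le\|y_1-y_2\|\,\|J(x_1)-J(x_2)\|$ and the fact that $\|J(x_i)\|=\|x_i\|$ is uniformly bounded on $B'$, I obtain
\begin{equation*}
0\le\epsilon\,\langle x_1-x_2,J(x_1)-J(x_2)\rangle\le C\,\|y_1-y_2\|,
\end{equation*}
where $C:=\sup\{\|J(x_1)-J(x_2)\|:x_1,x_2\in B'\}<\infty$.

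The main obstacle, and the decisive place where super-reflexivity rather than mere reflexivity is used, is converting the smallness of $\langle x_1-x_2,J(x_1)-J(x_2)\rangle$ into smallness of $\|x_1-x_2\|$. Since $X^*$ inherits uniform convexity from the renorming, a quantitative monotonicity estimate of Xu type is available: there exists a continuous, strictly increasing function $\phi:[0,\infty)\to[0,\infty)$ with $\phi(0)=0$ such that $\langle x_1-x_2,J(x_1)-J(x_2)\rangle\ge\phi(\|x_1-x_2\|)$ for all $x_1,x_2\in B'$. Plugging this into the previous inequality gives $\phi(\|x_1-x_2\|)\le(C/\epsilon)\,\|y_1-y_2\|$, so continuity of $\phi^{-1}$ at the origin delivers a uniform modulus of continuity for $(\epsilon I+R_{\alpha}(a)J)^{-1}$ on $B$. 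The technical heart of the argument lies in producing the correct inequality $\phi$ on the given bounded set of the renormed super-reflexive space and verifying its compatibility with the equivalent norm — this is precisely the content that the cited reference \cite{MR4336468} supplies and that is unavailable in a general reflexive Banach space.
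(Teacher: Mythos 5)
Your argument is essentially correct and, unlike the paper, which simply delegates the proof of Lemma \ref{Con} to the reference \cite{MR4336468}, it is self-contained: renorm via Proposition \ref{Propos}, use Lemma \ref{Lem3.1} to confine the images $x_i=(\epsilon I+R_{\alpha}(a)J)^{-1}(y_i)$ to a bounded set, subtract the two equations $\epsilon x_i+R_{\alpha}(a)J(x_i)=y_i$, pair with $J(x_1)-J(x_2)$, discard the nonnegative term $\langle R_{\alpha}(a)(J(x_1)-J(x_2)),J(x_1)-J(x_2)\rangle\ge 0$ (which, as you note, needs only the unconditional nonnegativity computation, not Hypothesis (U)), and conclude $\epsilon\langle x_1-x_2,J(x_1)-J(x_2)\rangle\le C\norm{y_1-y_2}$ with $C=2\sup\norm{x_i}$. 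This is a clean and correct reduction, and it buys the reader an explicit mechanism (monotonicity plus uniform monotonicity of $J$) that the paper leaves hidden inside the citation.

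One point needs correcting: the decisive lower bound $\langle x_1-x_2,J(x_1)-J(x_2)\rangle\ge\phi(\norm{x_1-x_2})$ on bounded sets is a consequence of uniform convexity of $X$ itself, not of $X^*$ as you state. Uniform convexity of $X^*$ is equivalent to uniform smoothness of $X$ and yields uniform \emph{continuity} of $J$ on bounded sets; it does not yield uniform monotonicity. Indeed, in a smooth but non-strictly-convex norm two distinct points on a flat face of the unit sphere share the same supporting functional, so $J(x_1)=J(x_2)$ while $x_1\neq x_2$, and no such $\phi$ can exist; uniform smoothness alone therefore cannot carry this step. The repair is immediate in your own framework: Proposition \ref{Propos} renorms the super-reflexive space $X$ to be simultaneously uniformly convex and uniformly smooth with power-type moduli, so on the bounded set $B'$ one has an Alber--Xu estimate of the form $\langle x_1-x_2,J(x_1)-J(x_2)\rangle\ge c\norm{x_1-x_2}^{p^{\prime}}$ for some $c>0$ and $2\le p^{\prime}<\infty$, and your final step $\phi(\norm{x_1-x_2})\le (C/\epsilon)\norm{y_1-y_2}$ then delivers the uniform modulus of continuity exactly as you describe. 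With the attribution fixed, the proof stands.
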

We now define the measurable selection map $S_G: C(I,X)\multimap L^1(I,X^*)$ as
\begin{equation}
	S_G(q)=\{g\in L^1(I,X^*): g(t)\in G(t,q(t))~\text{a.a.}~t\in I\}.
\end{equation}
The following Theorem says that the set $S_G(q)$ is nonempty for every $q\in C(I, X)$ and delivers some property.
\begin{theorem}\cite[Lemma 5.3]{MR2976197}\label{Thm3.2}
	Suppose the multivalued map $G:I\times X \multimap X^*$ satisfies the hypotheses (G1)-(G3).
	Then, the following is true.
	\begin{itemize}
		\item[(i)] the multifunction $S_G$ has nonempty and weakly compact convex values.
		\item[(ii)] the multimap $S_G$ has strongly weakly closed graph in the following sense: suppose $g_n\in S_G(q_n)$ with $g_n\rightharpoonup g$ in $L^1(I, X^*)$, $q_n\to q$ in $C(I, X)$, then $g\in S_G(q)$. 
	\end{itemize} 
\end{theorem}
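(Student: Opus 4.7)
The plan is to establish (i) first, except for the closedness half of weak compactness, and then prove (ii), whose specialisation to $q_n\equiv q$ supplies that missing closedness. For nonemptiness of $S_G(q)$, I would combine hypothesis (G1) with Lemma \ref{Lemma 2.15} to see that $G=\partial F$ is jointly $\Sigma\times\mathcal{B}(X)$-measurable; composition with the continuous map $q$ then yields a measurable multifunction $t\mapsto G(t,q(t))$ whose values are nonempty, convex and weakly compact in the separable reflexive space $X^*$. A Kuratowski--Ryll-Nardzewski-type measurable selection theorem produces a strongly measurable selector $g\colon I\to X^*$ with $g(t)\in G(t,q(t))$ almost everywhere, and (G3) gives $\|g(t)\|_{X^*}\le\eta(t)$ with $\eta\in L^{1/\alpha_1}(I)\subset L^1(I)$ (since $0<\alpha_1<\alpha<1$ and $|I|<\infty$), so $g\in L^1(I,X^*)$. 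Convexity of $S_G(q)$ is immediate from the pointwise convexity in (G1). The uniform bound $\|g(t)\|_{X^*}\le\eta(t)$, Remark \ref{rk}, and the Dunford--Pettis theorem (Theorem \ref{Thm2.2})---which applies because $X^*$ is reflexive---together furnish relative weak compactness of $S_G(q)$ in $L^1(I,X^*)$.

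For (ii), let $g_n\in S_G(q_n)$ with $g_n\rightharpoonup g$ in $L^1(I,X^*)$ and $q_n\to q$ in $C(I,X)$. I would apply Mazur's lemma to produce convex combinations $\tilde g_n=\sum_{k=n}^{N_n}\lambda_{n,k}g_k$, with $\lambda_{n,k}\ge 0$ and $\sum_k\lambda_{n,k}=1$, such that $\tilde g_n\to g$ strongly in $L^1(I,X^*)$; after passing to a subsequence, I may assume $\tilde g_n(t)\to g(t)$ in $X^*$, $q_n(t)\to q(t)$ in $X$, and $\|g_n(t)\|_{X^*}\le\eta(t)$ for all $t$ in a full-measure set $I_0\subset I$. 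Fix $t\in I_0$. For every $m$, the Mazur coefficients give $\tilde g_n(t)\in\mathrm{conv}\bigcup_{k\ge m}G(t,q_k(t))$ once $n\ge m$, so the strong limit lies in $\bigcap_{m\ge 1}\overline{\mathrm{conv}}^{\,w}\bigcup_{k\ge m}G(t,q_k(t))$. To collapse this intersection to $G(t,q(t))$, I would exploit the sequential strong--weak closed graph property (G2) together with the norm-boundedness of $\{g_k(t)\}$ in the reflexive $X^*$: every weak subsequential limit of $g_k(t)$ lies in $G(t,q(t))$ by (G2), so the weak sequential cluster set at $t$ sits inside the convex weakly compact set $G(t,q(t))$, and a standard convexity argument prevents the Mazur combinations from escaping this set in the limit. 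Thus $g(t)\in G(t,q(t))$ a.e. and $g\in S_G(q)$; specialising to $q_n\equiv q$ yields weak closedness and completes the weak-compactness assertion in (i).

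The principal obstacle I foresee is the last step of identifying the Mazur pointwise limit with an element of $G(t,q(t))$ using only (G2), which is a sequential closed-graph hypothesis rather than a full topological upper semicontinuity property. The reflexivity of $X^*$, the pointwise bound from (G3) that confines the argument to a norm ball on which the weak topology is sequentially well behaved (and, in the separable setting, metrisable), and the convexity and weak compactness of the target $G(t,q(t))$ are the three levers that make this passage work; this is essentially the argument carried out in the proof of the cited Lemma 5.3 of \cite{MR2976197}.
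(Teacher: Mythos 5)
The paper offers no proof of this statement---it is imported directly from \cite[Lemma 5.3]{MR2976197}---and your argument follows essentially the same standard route as that source: measurable selection together with the (G3) bound for nonemptiness (your appeal to Lemma \ref{Lemma 2.15} for joint measurability is legitimate here, since the paper only ever applies the result with $G=\partial F$), convexity from (G1), Dunford--Pettis for relative weak compactness, and a Mazur-lemma ``convergence theorem'' argument for the strongly--weakly closed graph, with the closedness part of (i) recovered from (ii) at $q_n\equiv q$. The one step you leave as a sketch---passing from ``every weak cluster point of $\{g_k(t)\}$ lies in $G(t,q(t))$'' to ``$g(t)\in G(t,q(t))$''---is indeed routine under your stated levers: since all $g_k(t)$ lie in the weakly compact ball of radius $\eta(t)$, every weakly open convex set $V\supset G(t,q(t))$ must contain all but finitely many $g_k(t)$ (otherwise a subsequence would have a weak cluster point outside $V$), hence the tail convex combinations and their strong limit $g(t)$ lie in $\overline{V}$, and intersecting over all such $V$ via Hahn--Banach separation and the weak compactness and convexity of $G(t,q(t))$ yields $g(t)\in G(t,q(t))$ a.e., completing the proof exactly as in the cited lemma.
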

\subsection{Main Result- Approximate Controllability}
With the results mentioned above and hypotheses in mind, we deliver the main result of this paper, the approximate controllability result for the Problem \eqref{PC1}.
\begin{theorem}\label{MR}
	Assume the Hypotheses (T) and (U) hold. Assume that the multivalued nonlinearity $G$ satisfies Hypotheses (G1)-(G3). Then, the problem \eqref{PC1} is approximately controllable in $I$.
\end{theorem}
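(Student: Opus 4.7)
The plan is to fix a target state $z \in X$ and, for each $\epsilon > 0$, to produce a mild solution $q_\epsilon$ of \eqref{PC1} with $\|q_\epsilon(a) - z\| \to 0$ along a subsequence. Following the outline of Subsection 3.1, I would apply the Kakutani–Fan fixed point theorem (Theorem \ref{fixed}) not to the solution map on $C(I, X)$ (whose values are not convex because of the nonlinear duality $J$ in \eqref{co}), but rather to the multimap $\Gamma_\epsilon = S_G \circ \Upsilon_\epsilon : L^1(I, X^*) \multimap L^1(I, X^*)$. A fixed point $g_\epsilon$ of $\Gamma_\epsilon$ then yields the mild solution $q_\epsilon = \Upsilon_\epsilon(g_\epsilon)$ driven by the control \eqref{co}, and Theorem \ref{Thm6.0} finishes the job as $\epsilon \to 0$.

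First I would check that $\Upsilon_\epsilon : L^1(I, X^*) \to C(I, X)$ is well-defined and continuous. Well-definedness is immediate from Lemma \ref{Bound of Salpha and Talpha} together with boundedness of $H$ and $B$. Continuity of the first two integral terms is routine; the delicate piece is the control $u$ in \eqref{co}, whose continuity reduces, via super-reflexivity of $X$, to Lemma \ref{Con} on uniform continuity of $(\epsilon I + R_\alpha(a) J)^{-1}$ on bounded sets together with the uniform continuity of $J$ in the renormed uniformly smooth space.

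Next I would build the invariant compact convex set
$$K = \{g \in L^1(I, X^*) : \|g(t)\|_{X^*} \le \eta(t) \text{ for a.a. } t \in I\}.$$
By Hypothesis (G3) we have $\Gamma_\epsilon(K) \subset K$. The set $K$ is convex, bounded in $L^1(I, X^*)$, and uniformly integrable by Remark \ref{rk}, so Theorem \ref{Thm2.2} makes it relatively weakly compact; since it is norm-closed and convex, it is weakly compact. By Theorem \ref{Thm3.2}(i) each $\Gamma_\epsilon(g)$ is nonempty, convex, and weakly compact. Upper semicontinuity of $\Gamma_\epsilon$ on $(K, w)$ reduces to showing the graph is sequentially weakly closed, and the crucial ingredient is that $\Upsilon_\epsilon$ carries weak convergence in $K$ to norm convergence in $C(I, X)$: the integral $\int_0^t (t-s)^{\alpha-1} T_\alpha(t-s) H g(s)\,ds$ is relatively compact in $C(I, X)$ by Lemma \ref{Lem3.6} applied to $Hg$ (which satisfies $\|Hg(s)\|_X \le \|H\|\,\eta(s)$), and the control term inherits norm compactness from the continuity established above. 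Theorem \ref{Thm3.2}(ii) then closes the graph, and Theorem \ref{fixed} produces $g_\epsilon \in \Gamma_\epsilon(g_\epsilon)$. I expect this weak-to-strong continuity of $\Upsilon_\epsilon$ on $K$ to be the main technical obstacle, because it requires splicing the fractional Arzelà–Ascoli argument of Lemma \ref{Lem3.6} together with the duality-map continuity of the control term into a single uniform estimate.

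Finally, setting $q_\epsilon = \Upsilon_\epsilon(g_\epsilon)$ and using the identity $R_\alpha(a) J (\epsilon I + R_\alpha(a) J)^{-1} w = w - \epsilon(\epsilon I + R_\alpha(a) J)^{-1} w$, a direct computation gives
$$z - q_\epsilon(a) = \epsilon (\epsilon I + R_\alpha(a) J)^{-1}\!\left( z - S_\alpha(a) x_0 - \int_0^a (a-s)^{\alpha-1} T_\alpha(a-s) H g_\epsilon(s)\,ds \right).$$
As $\epsilon$ varies, the argument inside the parenthesis lies in the set $\{z - S_\alpha(a) x_0\} - \Xi(HK)(a)$, which is relatively compact in $X$ by Lemma \ref{Lem3.6}. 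Theorem \ref{Thm6.0} then supplies a sequence $\epsilon_n \to 0$ with $z - q_{\epsilon_n}(a) \to 0$, so \eqref{PC1} is approximately controllable; the implication derived earlier in Section 3 that solutions of \eqref{PC1} are solutions of \eqref{HV1} transfers the conclusion to the hemivariational problem.
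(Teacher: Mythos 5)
Your proposal is correct and follows essentially the same route as the paper's own proof: the composed multimap $\Gamma_\epsilon = S_G\circ\Upsilon_\epsilon$ on the weakly compact convex set $\{g:\|g(t)\|\le\eta(t)\}$, weak-to-strong continuity of $\Upsilon_\epsilon$ via Lemma \ref{Lem3.6} and Lemma \ref{Con}, the closed-graph/fixed-point argument via Theorem \ref{Thm3.2} and Theorem \ref{fixed}, the resolvent identity for $q_\epsilon(a)$, and Theorem \ref{Thm6.0}. The only cosmetic difference is the last step, where you feed Theorem \ref{Thm6.0} the relatively compact image set $\{z-S_\alpha(a)x_0\}-\Xi(HQ_G)(a)$ directly, whereas the paper first extracts a weakly convergent subsequence of $g_\epsilon$; both are valid.
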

\begin{proof}
Define the sets 
	\begin{equation}
		B_C^N=\{q\in C(I,X): \norm{q}_{C(I,X)}\le N\}, N>0,
	\end{equation}
and
	\begin{equation}
		Q_{G}=\{g\in  L^1(I, X^*): \norm{g(t)}\le \eta(t)~\text{a.a.}~t\in I\},
	\end{equation}
where $\eta$ is as in Hypothesis (G3).
	
Fix $\epsilon>0$ and let $z\in X$ be the final state we want to achieve in time $a$. We define the  map $\Upsilon_{\epsilon}: Q_G\to C(I,X)$ as follows: for $g\in Q_G$, $\Upsilon_{\epsilon}(g)$ satisfies
	\begin{equation}\label{Y1}
		\Upsilon_{\epsilon}(g)(t)=S_\alpha(t)x_0+\int_{0}^{t}(t - s)^{\alpha - 1} T_\alpha(t-s)Hg(s)ds+\int_{0}^{t}(t - s)^{\alpha - 1}T_\alpha(t-s)Bu(s)ds, ~t\in I,
	\end{equation}   
	where $u\in L^2(I,U)$ is given by
	\begin{equation}\label{Y2}
		u(t)=B^*T_{\alpha}^*(a-t)J\left((\epsilon I+R_{\alpha}(a)J)^{-1}\left(z-S_\alpha(a)x_0-\int_{0}^{a}(a - s)^{\alpha - 1}T_{\alpha}(a-s)Hg(s)ds\right)\right).
	\end{equation}

	We prove this Theorem in several steps.\\
	\textbf{(STEP-I)} 	We now prove the map $\Upsilon_{\epsilon}$ maps the set $Q_G$ into the set $B_C^{N_0}$ for some $N_0$. Let $y=\Upsilon_{\epsilon}(g), g\in Q_G$. Then $y$ satisfies \eqref{Y1}, \eqref{Y2}. With the help of expression \eqref{Y1} together with Lemma \ref{Bound of Salpha and Talpha} we estimate 
	\begin{align}\label{Yec}
		\norm{y(t)}\le & M\norm{x_0}+\frac{M}{\Gamma(\alpha)}\int_{0}^{t}(t - s)^{\alpha - 1}\norm{H}_{\mathcal{L}(X^*,X)}\norm{g(s)}ds+\norm{\int_{0}^{t}(t - s)^{\alpha - 1}T_{\alpha}(t-s)Bu(s)ds}\\
  \le &M\norm{x_0}+\frac{M}{\Gamma(\alpha)}\norm{H}_{\mathcal{L}(X^*,X)}\int_{0}^{t}(t - s)^{\alpha - 1}\eta(s)ds+\norm{\int_{0}^{t}(t - s)^{\alpha - 1}T_{\alpha}(t-s)Bu(s)ds}\
	\end{align}
 By employing H$\ddot{\text{o}}$lder's inequality, we obtain from above
 \begin{align}
 \norm{y(t)}\le &M\norm{x_0}+\frac{M}{\Gamma(\alpha)}\norm{H}_{\mathcal{L}(X^*,X)}\norm{\eta}_{L^{\frac{1}{\alpha_1}}}\left(\frac{t^{\beta(\alpha-1)+1}}{\beta(\alpha-1)+1}\right)^{1-\alpha_1}+\norm{\int_{0}^{t}(t - s)^{\alpha - 1}T_{\alpha}(t-s)Bu(s)ds}\\
 \le &M\norm{x_0}+\frac{M}{\Gamma(\alpha)}\norm{H}_{\mathcal{L}(X^*,X)}\norm{\eta}_{L^{\frac{1}{\alpha_1}}}\left(\frac{a^{\beta(\alpha-1)+1}}{\beta(\alpha-1)+1}\right)^{1-\alpha_1}+\norm{\int_{0}^{t}(t - s)^{\alpha - 1}T_{\alpha}(t-s)Bu(s)ds},
	\end{align}
 where $\beta=\frac{1}{1-\alpha_1}$. Denote by $\Theta=\left(\frac{a^{\beta(\alpha-1)+1}}{\beta(\alpha-1)+1}\right)^{1-\alpha_1}\norm{\eta}_{L^{\frac{1}{\alpha_1}}}$. With this notation, we can write the above expression as
  \begin{align}\label{YY}
 \norm{y(t)}
 \le &M\norm{x_0}+\frac{M}{\Gamma(\alpha)}\norm{H}_{\mathcal{L}(X^*,X)}\Theta+\norm{\int_{0}^{t}(t - s)^{\alpha - 1}T_{\alpha}(t-s)Bu(s)ds}.
	\end{align}
	Using the expression \eqref{Y2} we estimate
 \begin{align*}
     &	\norm{\int_{0}^{t}(t - s)^{\alpha - 1}T_\alpha(t-s)Bu(s)ds}\\
		=&\norm{\int_{0}^{t}(t - s)^{\alpha - 1}T_\alpha(t-s)BB^*T_\alpha^*(a-s)J\right.\\
  &\hspace{3cm}\left.\times\left((\epsilon I+R_{\alpha}(a)J)^{-1}\left(z-S_\alpha(a)x_0-\int_{0}^{a}(a - r)^{\alpha - 1}T_\alpha(a-r)Hg(r)dr\right)\right)ds}\\
 \end{align*}
 Invoking Lemma \ref{Bound of Salpha and Talpha} and using the definition of duality mapping, we obtain
 \begin{align*}
     &	\norm{\int_{0}^{t}(t - s)^{\alpha - 1}T_\alpha(t-s)Bu(s)ds}\\
		\le & \frac{1}{\epsilon}\frac{M^2\norm{B}^2}{(\Gamma(\alpha))^2}\int_{0}^{t}(t - s)^{\alpha - 1}\norm{J\left(\epsilon(\epsilon I+R_{\alpha}(a)J)^{-1}\left(z-S_\alpha(a)x_0-\int_{0}^{a}(a - r)^{\alpha - 1}T_\alpha(a-r)Hg(r)dr\right)\right)}ds\\
		= & \frac{1}{\epsilon}\frac{M^2\norm{B}^2}{(\Gamma(\alpha))^2}\int_{0}^{t}(t-s)^{\alpha-1}\norm{\epsilon(\epsilon I+R_{\alpha}(a)J)^{-1}\left(z-S_\alpha(a)x_0-\int_{0}^{a}(a - r)^{\alpha - 1}T_\alpha(a-r)Hg(r)dr\right)}ds\\
 \end{align*}
 By virtue of Lemma \ref{Lem3.1}, we obtain
  \begin{align}
     &	\norm{\int_{0}^{t}(t - s)^{\alpha - 1}T_\alpha(t-s)Bu(s)ds}\\
  \le & \frac{1}{\epsilon}\frac{M^2\norm{B}^2}{(\Gamma(\alpha))^2}\int_{0}^{t}(t-s)^{\alpha-1}\norm{z-S_\alpha(a)x_0-\int_{0}^{a}(a - r)^{\alpha - 1}T_\alpha(a-r)Hg(r)dr}ds\\
  \le & \frac{1}{\epsilon}\frac{M^2\norm{B}^2}{(\Gamma(\alpha))^2} \int_{0}^{t}(t-s)^{\alpha-1}[\norm{z}+ M\norm{x_0}+\frac{M}{\Gamma(\alpha)}\norm{H}_{\mathcal{L}(X^*,X)}\int_{0}^{a}(a - r)^{\alpha - 1}\norm{g(r)}dr]ds \\
   \le & \frac{1}{\epsilon}\frac{M^2\norm{B}^2}{(\Gamma(\alpha))^2} \frac{a^{\alpha}}{\alpha}\left[\norm{z}+M\norm{x_0}+\frac{M}{\Gamma(\alpha)}\norm{H}_{\mathcal{L}(X^*,X)}\Theta\right].
 \end{align}
In the last inequality we use Hypothesis (G3).
 Therefore, inserting the value of $\displaystyle \norm{\int_{0}^{t}(t-s)^{\alpha-1}T_\alpha(t-s)Bu(s)ds}$ in \eqref{YY} we obtain
	\begin{align*}
		\norm{y(t)}\le & M\norm{x_0}+\frac{M}{\Gamma(\alpha)}\norm{H}_{\mathcal{L}(X^*,X)}\Theta+\frac{1}{\epsilon}\frac{M^2\norm{B}^2}{(\Gamma(\alpha))^2} \frac{a^{\alpha}}{\alpha}\left[\norm{z}+M\norm{x_0}+\frac{M}{\Gamma(\alpha)}\norm{H}_{\mathcal{L}(X^*,X)}\Theta\right]=N_0~\text{(say)}.
	\end{align*}
	From this we conclude that there exists $N_0$ (dependent on $\epsilon$) such that $\Upsilon_{\epsilon}$ maps the set $Q_G$ into $B_C^{N_0}$.\\
	
	\textbf{(STEP-II)} In this step we show that the multimap $\Upsilon_{\epsilon}$ is continuous from the set $Q_G$ endowed with the weak topology into $C(I,X)$. Observe that the set $Q_G$ is a compact convex metrizable set in $w-L^1(I, X^*)$. Therefore, it suffices to prove the sequential continuity of the operator $\Upsilon_{\epsilon}$. Assume that a sequence $\{g_n\}_{n\in \mathbb{N}}\subset Q_G$, converges to $g$ in the space $w-L^1(I, X^*), q_n=\Upsilon_{\epsilon}(g_n), n\in \mathbb{N}$ and $q=\Upsilon_{\epsilon}(g)$. We show $q_n\to q$ in $C(I,X)$.
	By the definition of the operator $\Upsilon_{\epsilon}$, we obtain
	\begin{equation}\label{4.7}
		q_n(t)=S_\alpha(t)x_0+\int_{0}^{t}(t-s)^{\alpha -1}T_\alpha(t-s)Hg_n(s)ds+\int_{0}^{t}(t-s)^{\alpha - 1} T_\alpha(t-s)Bu_n(s)ds, ~t\in I,~ n\in \mathbb{N},
	\end{equation}
	and
	\begin{equation}
		q(t)=S_\alpha(t)x_0+\int_{0}^{t}(t-s)^{\alpha-1}T_\alpha(t-s)Hg(s)ds+\int_{0}^{t}(t-s)^{\alpha-1}T_\alpha(t-s)Bu(s)ds, ~t\in I.
	\end{equation}
	In the above,
	\begin{equation}
		u_n(t)=B^*T_\alpha^*(a-t)J\left((\epsilon I+R_{\alpha}(a)J)^{-1}\left(z-S_\alpha(a)x_0-\int_{0}^{a}(a-s)^{\alpha -1}T_\alpha(a-s)Hg_n(s)ds\right)\right),~ t\in I,
	\end{equation}
	and
	\begin{equation}
		u(t)=B^*T_\alpha^*(a-t)J\left((\epsilon I+R_{\alpha}(a)J)^{-1}\left(z-S_\alpha(a)x_0-\int_{0}^{a}(a-s)^{\alpha-1}T_{\alpha}(a-s)Hg(s)ds\right)\right),~ t\in I.
	\end{equation}
	As the map $H:X^*\to X$ is bounded linear, we can define the map $\Phi:L^1(I,X^*)\to L^1(I,X)$ as $\Phi(g)(s)=Hg(s),~ s\in I,~ g\in L^1(I,X^*)$. For $\phi$ belongs to the dual of $L^1(I,X)$, we define the map $\Psi: L^1(I,X^*)\to \mathbb{R}$ as $\Psi=\phi \circ \Phi$. It is obvious that the map $\Psi$ belongs to the dual of $L^1(I, X^*)$. As $g_n\rightharpoonup g$ in $L^1(I,X^*)$, by the definition of the weak convergence we conclude that
	\begin{equation}
		\phi(\Phi(g_n))=\Psi(g_n)\to \Psi(g)=\phi(\Phi(g)),~ \text{as}~n\to\infty,
	\end{equation}
	from which we conclude that $\Phi(g_n)\rightharpoonup \Phi(g)$ in $L^1(I,X)$. The compactness of the semigroup $\{T_\alpha(t)\}_{t\ge 0}$ together with Lemma \ref{Lem3.6} imply that 
	\begin{equation}
		\int_{0}^{t}(t-s)^{\alpha-1}T_\alpha(t-s)\Phi(g_n)(s)ds\to \int_{0}^{t}(t-s)^{\alpha-1}T_\alpha(t-s)\Phi(g)(s)ds,~\text{uniformly in}~C(I,X),
	\end{equation}
	and consequently, we obtain
	\begin{equation}\label{4.}
		\int_{0}^{t}(t-s)^{\alpha-1}T_\alpha(t-s)Hg_n(s)ds\to \int_{0}^{t}(t-s)^{\alpha-1}T_\alpha(t-s)Hg(s)ds,~\text{uniformly in}~C(I,X).
	\end{equation}
	In particular,
	\begin{equation}
		\int_{0}^{a}(a-s)^{\alpha-1}T_\alpha(a-s)Hg_n(s)ds\to \int_{0}^{a}(a-s)^{\alpha-1}T_\alpha(a-s)Hg(s)ds~\text{as}~n\to \infty~\text{in}~X.
	\end{equation}
	By the continuity of the operator $(\epsilon I+R_{\alpha}(a)J)^{-1}$ [ See Lemma \ref{Con}] we obtain
	\begin{align*}
		&u_n(t)=B^*T_\alpha^*(a-t)J\left((\epsilon I+R_{\alpha}(a)J)^{-1}\left(z-S_\alpha(a)x_0-\int_{0}^{a}(a-s)^{\alpha-1}T_\alpha(a-s)Hg_n(s)ds\right)\right)\\
		&\to B^*T_\alpha^*(a-t)J\left((\epsilon I+R_{\alpha}(a)J)^{-1}\left(z-S_\alpha(a)x_0-\int_{0}^{a}(a-s)^{\alpha-1}T_\alpha(a-s)Hg(s)ds\right)\right)~\text{as}~n\to \infty.
	\end{align*}
	Further, using the definition of the duality map together with Lemma \ref{Lem3.1}, we compute
	\begin{align*}
		\norm{u_n(t)}\le& \frac{M}{\Gamma(\alpha)}\norm{B}\norm{J\left((\epsilon I+R_{\alpha}(a)J)^{-1}\left(z-S_\alpha(a)x_0-\int_{0}^{a}(a-s)^{\alpha-1}T_\alpha(a-s)Hg_n(s)ds\right)\right)}\\
		\le& \frac{M}{\Gamma(\alpha)}\norm{B}\norm{(\epsilon I+R_{\alpha}(a)J)^{-1}\left(z-S_\alpha(a)x_0-\int_{0}^{a}(a-s)^{\alpha-1}T_\alpha(a-s)Hg_n(s)ds\right)}\\
		\le&\frac{1}{\epsilon} \frac{M}{\Gamma(\alpha)}\norm{B}\norm{z-S_\alpha(a)x_0-\int_{0}^{a}(a-s)^{\alpha-1}T_\alpha(a-s)Hg_n(s)ds}\\
		\le&\frac{1}{\epsilon} \frac{M}{\Gamma(\alpha)}\norm{B}\left[\norm{z}+M\norm{x_0}+\frac{M}{\Gamma(\alpha)}\norm{H}_{\mathcal{L}(X^*,X)}\Theta\right], t\in I.
	\end{align*}
	Employing \cite[Theorem 7.2]{bartle2014elements} we conclude that $u_n\to u$ in $L^2(I,U)$.  We now estimate
	\begin{align*}
		&\norm{\int_{0}^{t}(t-s)^{\alpha-1}T_\alpha(t-s)Bu_n(s)ds-\int_{0}^{t}(t-s)^{\alpha-1}T_\alpha(t-s)Bu(s)ds}\\
		=&\norm{\int_{0}^{t}(t-s)^{\alpha-1}T_\alpha(t-s)[Bu_n(s)-Bu(s)]ds}\\
		\le& \frac{M}{\Gamma(\alpha)}\norm{B}\int_{0}^{t}(t-s)^{\alpha-1}\norm{u_n(s)-u(s)}ds\\
		\le &\frac{M}{\Gamma(\alpha)}\norm{B}\left(\frac{a^{2\alpha-1}}{2\alpha-1}\right)^{\frac{1}{2}}\norm{u_n-u}_{L^2(I,U)},
	\end{align*}
	where in the last inequality we use H$\ddot{\text{o}}$lder's inequality and recall $\frac{1}{2}<\alpha<1$.
	From this, we conclude that 
	\begin{equation}\label{4.13}
		\int_{0}^{t}(t-s)^{\alpha-1}T_\alpha(t-s)Bu_n(s)ds\to \int_{0}^{t}(t-s)^{\alpha-1}T_\alpha(t-s)Bu(s)ds~\text{as}~n\to \infty~\text{uniformly in}~C(I,X).
	\end{equation}
	Therefore, passing limit in \eqref{4.7} as $n\to \infty$ and using \eqref{4.} and \eqref{4.13} we obtain
	\begin{equation}
		q_n(t)\to q(t)~\text{uniformly in}~C(I,X)~\text{as}~n\to \infty.	
	\end{equation}
	This shows that the map $\Upsilon_{\epsilon}$ is continuous from $w-Q_{G}$ into $C(I, X)$.\\
	
	\textbf{(STEP-III)} We now define the multimap $\Gamma_{\epsilon}: Q_G\multimap L^{1}(I,X^*)$ as
	\begin{equation}
		\Gamma_{\epsilon}(g)=S_G(\Upsilon_{\epsilon}(g)), g\in Q_G.
	\end{equation}
	In the above the multimap $S_G: \Upsilon_{\epsilon}(Q_G)\multimap L^1(I,X^*)$ is defined by
	\begin{equation}
		S_G(q)=\{g\in L^1(I,X^*): g(t)\in G(t, q(t))~\text{a.a.}~t\in I\}, q\in \Upsilon_{\epsilon}(Q_G).
	\end{equation}
	Clearly the map $\Gamma_{\epsilon}$ is well defined and has weakly compact convex values. Also by Hypothesis (G3) it follows that $\Gamma_{\epsilon}(Q_G)\subset Q_G$.\\
	
	\textbf{(STEP-IV)} In this step we prove the multimap $\Gamma_{\epsilon}$ is upper semicontinuous in $w-L^{1}(I,X^*)$.  As the multimap $\Gamma_{\epsilon}$ is weakly compact, it is sufficient to prove that the multimap $\Gamma_{\epsilon}$ has a weakly closed graph.
	To prove this consider $\{g_n\}_{n\in \mathbb{N}}\subset Q_G, \{f_n\}_{n\in \mathbb{N}}\subset Q_G$ be such that $f_n\rightharpoonup f$, $g_n\rightharpoonup g$ in $L^1(I,X^*)$ and $f_n\in \Gamma_{\epsilon}(g_n)$ for all $n\in \mathbb{N}$. We show $f\in \Gamma_{\epsilon}(g)$. The fact that $f_n\in \Gamma_{\epsilon}(g_n)$ we obtain
	\begin{equation}
		f_n\in \Gamma_{\epsilon}(g_n)=S_G(\Upsilon_{\epsilon}(g_n)), g_n\in Q_G.
	\end{equation}
	As $g_n\rightharpoonup g$ in $L^1(I,X^*)$, and the map $\Upsilon_{\epsilon}$ is continuous from $w-Q_G$ into $C(I,X)$, we obtain
	\begin{equation}
		\Upsilon_{\epsilon}(g_n)\to \Upsilon_{\epsilon}(g) ~\text{in}~C(I,X).
	\end{equation}
	Also, by Theorem \ref{Thm3.2} the map $S_G$ has strongly weakly closed graph, hence $f\in S_G(\Upsilon_{\epsilon}(g))=\Gamma_{\epsilon}(g)$. Thus, $\Gamma_{\epsilon}$ has a weakly closed graph.\\

	\textbf{(STEP-V)} Therefore by Theorem \ref{fixed}, the map $\Gamma_{\epsilon}$ has a fixed point for each $\epsilon>0$. Therefore, we have 
	\begin{equation}
		g_{\epsilon}\in S_G(\Upsilon_{\epsilon}(g_{\epsilon})), g_{\epsilon}\in Q_G.
	\end{equation}
	Let $q_{\epsilon}=\Upsilon_{\epsilon}(g_{\epsilon})$, then $g_{\epsilon}\in S_G(q_{\epsilon})$. The definition of the map $\Upsilon_{\epsilon}$ gives 
	\begin{align}
		q_{\epsilon}(t)=S_\alpha(t)x_0+\int_{0}^{t}(t-s)^{\alpha-1}T_\alpha(t-s)Hg_{\epsilon}(s)ds+\int_{0}^{t}(t-s)^{\alpha-1}T_\alpha(t-s)Bu_{\epsilon}(s)ds, t\in I.
	\end{align}
	In the above $u_{\epsilon}\in L^2(I,U)$ is given by
	\begin{equation}
		u_{\epsilon}(t)=B^*T_\alpha^*(a-t)J\left((\epsilon I+R_{\alpha}(a)J)^{-1}\left(z-S_{\alpha}(a)x_0-\int_{0}^{a}(a-s)^{\alpha-1}T_\alpha(a-s)Hg_{\epsilon}(s)ds\right)\right).
	\end{equation}
	This concludes that $q_{\epsilon}$ solves problem \eqref{PC1} for each $\epsilon>0$.
	
	It remains to show that $\norm{q_{\epsilon}(a)-z}\to 0$ as $\epsilon\to 0$.
	
	As the set $Q_G$ is weakly compact in $L^1(I,X^*)$, we obtain $g_{\epsilon}\rightharpoonup g$ in $L^1(I,X^*)$ upto a subsequence. Therefore, using Lemma \ref{Lem3.6} and using the compactness of the semigroup, we obtain
	\begin{align}\label{conv}
		z-S_\alpha(a)x_0-\int_{0}^{a}(a-s)^{\alpha-1}T_\alpha(a-s)Hg_{\epsilon}(s)ds\to z-S_\alpha(a)x_0-\int_{0}^{a}(a-s)^{\alpha-1}T_\alpha(a-s)Hg(s)ds~\text{in}~X.
	\end{align}
	Recalling the definition of the map $R_{\alpha}(a)$, we now estimate
	\begin{align*}
		q_{\epsilon}(a)
		=&S_\alpha(a)x_0+\int_{0}^{a}(a-s)^{\alpha-1}T_\alpha(a-s)Hg_{\epsilon}(s)ds+\int_{0}^{a}(a-s)^{\alpha-1}T_\alpha(a-s)Bu_{\epsilon}(s)ds\\
		=&S_\alpha(a)x_0+\int_{0}^{a}(a-s)^{\alpha-1}T_\alpha(a-s)Hg_{\epsilon}(s)ds\\
		+&\int_{0}^{a}(a-s)^{\alpha-1}T_\alpha(a-s)BB^*T_{\alpha}^*(a-s)J\\
  &\hspace{2cm}\times\left((\epsilon I+R_{\alpha}(a)J)^{-1}\left(z-S_\alpha(a)x_0-\int_{0}^{a}(a-s)^{\alpha-1}T_\alpha(a-s)Hg_{\epsilon}(s)ds\right)\right)ds\\
		=&S_\alpha(a)x_0+\int_{0}^{a}(a-s)^{\alpha-1}T_\alpha(a-s)Hg_{\epsilon}(s)ds\\
		+&R_{\alpha}(a)J\left((\epsilon I+R_{\alpha}(a)J)^{-1}\left(z-S_{\alpha}(a)x_0-\int_{0}^{a}(a-s)^{\alpha-1}T_{\alpha}(a-s)Hg_{\epsilon}(s)ds\right)\right)\\
		=&S_\alpha(a)x_0+\int_{0}^{a}(a-s)^{\alpha-1}T_{\alpha}(a-s)Hg_{\epsilon}(s)ds\\
		+&(\epsilon I+R_{\alpha}(a)J-\epsilon I)(\epsilon I+R_{\alpha}(a)J)^{-1}\left(z-S_\alpha(a)x_0-\int_{0}^{a}(a-s)^{\alpha-1}T_\alpha(a-s)Hg_{\epsilon}(s)ds\right)\\
		=&S_\alpha(a)x_0+\int_{0}^{a}(a-s)^{\alpha-1}T_\alpha(a-s)Hg_{\epsilon}(s)ds+z-S_\alpha(a)x_0-\int_{0}^{a}(a-s)^{\alpha-1}T_\alpha(a-s)Hg_{\epsilon}(s)ds\\
		-&\epsilon(\epsilon I+R_{\alpha}(a)J)^{-1}\left(z-S_\alpha(a)x_0-\int_{0}^{a}(a-s)^{\alpha-1}T_\alpha(a-s)Hg_{\epsilon}(s)ds\right)\\
		=&z-\epsilon(\epsilon I+R_{\alpha}(a)J)^{-1}\left(z-S_\alpha(a)x_0-\int_{0}^{a}(a-s)^{\alpha-1}T_\alpha(a-s)Hg_{\epsilon}(s)ds\right)
	\end{align*}
	
	Therefore, by the convergence shown in \eqref{conv} together with Theorem \ref{Thm6.0} we obtain $q_{\epsilon}(a)\to z$ as desired. The proof is completed.
\end{proof}
\begin{remark}\label{RR}
	In proving the controllability result, it is required to assume that the space $X$ is super-reflexive. This requirement stems from the need for continuity of the map $\Upsilon_{\epsilon}$ within the proof. To establish the continuity of $\Upsilon_{\epsilon}$, we must demonstrate the continuity of the operator $(\epsilon I+R_{\alpha}(a)J)^{-1}: X\to X$. This, in turn, relies on $X$ possessing $p$-smoothness and $p^{\prime}$-convexity properties, along with uniform smoothness and uniform convexity properties, as detailed in Lemma \ref{Con}.
	
	For spaces that are both uniformly smooth and uniformly convex, an equivalent norm exists, transforming $X$ into a $p$-smooth and $p^{\prime}$-convex space for some $1<p\le 2$ and $2\le p^{\prime}<\infty$ (see \cite{MR394135}). Unfortunately, reflexive Banach spaces lack this property, as demonstrated in \cite{MR3446}. However, in the case of super-reflexive Banach spaces, Proposition \ref{Propos} stated in the Appendix asserts that $X$ can be renormed to become uniformly smooth, uniformly convex, and possess $p$-smoothness and $p^{\prime}$-convexity properties for $1<p\le 2$ and $2\le p^{\prime}<\infty$. Notably, Banach spaces that are uniformly smooth and uniformly convex are also super-reflexive (see Remark \ref{Remarkk} in the Appendix).
	
	Consequently, we opt for super-reflexive Banach spaces when investigating controllability results. For $1<p<\infty$, $L^p(\Omega,\mathbb{R})$ and Sobolev spaces $W^{m,p}(\Omega,\mathbb{R})$ with $m>0$, where $\Omega\subset \mathbb{R}^n$ is a measurable set are among few examples of super-reflexive spaces (see \cite{MR2213033}). For detailed definitions of $p$-smooth, $p^{\prime}$-convex Banach spaces and super-reflexive Banach spaces, we refer the reader to see the Appendix section provided at the end of the paper.
\end{remark}
\begin{remark}\label{RM}
	Note that under the assumptions of the paper, we cannot replace Hypothesis (F3) with the Hypothesis of the form:
 \begin{itemize}
\item[(F3$^{\prime}$)]  there exists a function $\eta\in \ L^{\frac{1}{\alpha_1}}(I, \mathbb{R}^+)$ and a constant $d>0$ such that
		\begin{equation}
			\norm{\partial F(t,x)}=\sup\{\norm{z}_{X^*}: z\in \partial F(t,x)\}\le \eta(t)+d\norm{x}, t\in I, x\in X,
		\end{equation}
 \end{itemize}
	This is because, if $q_{\epsilon}, \epsilon>0$ is a mild solution of the problem \eqref{PC1}, then $q_{\epsilon}$ has the form
	\begin{align}\label{MCM1}
		q_{\epsilon}(t)=S_{\alpha}(t)x_0+\int_{0}^{t}(t-s)^{\alpha-1}T_{\alpha}(t-s)Hg_{\epsilon}(s)ds+\int_{0}^{t}(t-s)^{\alpha-1}T_{\alpha}(t-s)Bu_{\epsilon}(s)ds, t\in I.
	\end{align}
	In the above $g_{\epsilon}\in S_{G}(q_{\epsilon})$ and $u_{\epsilon}\in L^2(I,U)$ is given by
	\begin{equation}\label{MCM2}
		u_{\epsilon}(t)=B^*T_{\alpha}^*(a-t)J\left((\epsilon I+R_{\alpha}(a)J)^{-1}\left(z-S_{\alpha}(a)x_0-\int_{0}^{a}(a-s)^{\alpha-1}T_{\alpha}(a-s)Hg_{\epsilon}(s)ds\right)\right).
	\end{equation}
	To prove that $q_{\epsilon}(a)\to z$, we need to show $\{g_{\epsilon}\}_{\epsilon>0}\subset L^1(I, X^*)$ has a weakly convergent subsequence (see STEP-V of the proof of Theorem \ref{MR} given in Section 3).
	Note that $q_{\epsilon}$ has the following bounds (see \textbf{STEP-I} of the proof of Theorem \ref{MR}).
	\begin{align}\label{epsilon}
		\norm{y(t)}\le & M\norm{x_0}+\frac{M}{\Gamma(\alpha)}\norm{H}_{\mathcal{L}(X^*,X)}\Theta+\frac{1}{\epsilon}\frac{M^2\norm{B}^2}{(\Gamma(\alpha))^2} \frac{a^{\alpha}}{\alpha}\left[\norm{z}+M\norm{x_0}+\frac{M}{\Gamma(\alpha)}\norm{H}_{\mathcal{L}(X^*,X)}\Theta\right].
	\end{align}
	Due to the $\frac{1}{\epsilon}$-term in the right hand side of \eqref{epsilon} the sequence of functions $\{g_{\epsilon}\}_{\epsilon>0}$ fails to be bounded or integrably bounded under Hypothesis (F3$^{\prime}$). Therefore, we cannot conclude that $\{g_{\epsilon}\}_{\epsilon>0}\subset L^1(I,X^*)$ has a weakly convergent subsequence.
\end{remark}
\section{Application}
In this section, we provide an example that illustrates the abstract results of this paper. We consider a controlled system modeled by an evolution partial differential equations. The system is described by the classical heat equation involving a multivalued subdifferential term.

Let $\Omega=[0,\pi]$ and $I=[0,a], 0<a<\infty$. We consider the equation
\begin{equation}\label{W1}
	{}^C D_t^\alpha y(t,\theta)=\frac{\partial^2y(t,\theta)}{\partial\theta^2}+ \eta(t,\theta)+\int_{0}^{\pi}L(\theta, \omega)f(t,\omega)d\omega~\text{in}~I\times[0,\pi],
\end{equation}
where $\alpha \in (\frac{1}{2},1)$
\begin{equation}\label{inclusion}
f(t,\theta)\in \partial F(t,\theta, y(t, \theta))~\text{for a.a.}~t\in I, \theta\in [0,\pi].
\end{equation}
Moreover, 
\begin{equation}
	y(t,0)=0, y(t,\pi)=0, ~\text{for}~t\in I, 	
\end{equation}
and
\begin{equation}
	y(0,\theta)=x_0(\theta)~\text{in}~[0,\pi].
\end{equation}
Also $\eta:I\times [0,\pi]\to \mathbb{R}$ is a continuous function defined by
\begin{equation}
	\eta(t,\theta)=\int_{0}^{\pi}K(\theta,\omega)v(t,\omega)d\omega,~~ t\in I, \theta\in [0,\pi].
\end{equation}
Here, $v\in L^2(I\times [0,\pi],\mathbb{R})$ and the kernel $K\in C([0,\pi]\times [0,\pi],\mathbb{R})$ is such that $K(\theta,\xi)=K(\xi,\theta)$ for all $\theta, \xi\in [0,\pi]$. In particular, we can choose $K\in C([0,\pi]\times [0,\pi],\mathbb{R})$ as 
\begin{equation}
	K(t,s)=
	\begin{cases}
		s(\pi-t), ~~\text{if}~0\le s\le t\le \pi\\
		(\pi-s)t, ~~\text{if}~0\le t\le s\le \pi,
	\end{cases}
\end{equation}
or the kernel
\begin{equation}
	K(t,s)=\min\{s,t\}, ~~t,s\in [0,\pi].
\end{equation}
Further, we assume that $L\in C([0,\pi]\times [0,\pi], \mathbb{R})$ and $\partial F$ denotes the Clarke subdifferential of the function $F$ in the third variable. Here, $F=F(t,\theta,r):I\times [0,\pi]\times \mathbb{R}\to \mathbb{R}$ satisfies
\begin{itemize}
	\item[(f1)] $F(\cdot,\cdot,r)$ is measurable on $I\times[0,\pi]$ for all $r\in \mathbb{R}$;
	\item[(f2)] $F(t,\theta,\cdot)$ is Lipschitz for $(t,\theta)\in I\times[0,\pi]$;
	\item[(f3)] there exists a function $\bar{\alpha}\in L^{\frac{1}{\alpha_1}}(I,\mathbb{R}^+)$ with $\alpha_1<\alpha$ such that
	\begin{equation}
		\norm{\partial F(t,\theta,r)}\le \bar{\alpha}(t), ~\text{for all}~r\in \mathbb{R}, ~\text{a.e.}~t\in I,\theta\in [0,\pi];
	\end{equation} 
	\item[(f4)] the function $F(t,\theta,\cdot)$ is regular in the sense of \cite[Definition 3.25]{MR2976197}. 
\end{itemize} 
We underline that due to the lack of convexity of the function $F(t,\theta, \cdot)$, the above problem cannot be formulated as a variational inequality.

The multifunction $\partial F(t,\theta,\cdot):\mathbb{R}\multimap \mathbb{R}$ is generally non-monotone, and it includes the vertical jumps. In a physicist's language, the law is characterized by the generalized gradient of a nonsmooth potential $F$.

We now approach by rewriting the control problem \eqref{W1} as an abstract problem driven by a differential inclusion in the space $X=L^p([0,\pi],\mathbb{R}), p\ge2$ and $U=L^2([0,\pi], \mathbb{R})$. Note that $X$ is a super-reflexive Banach space and $U$ is a separable Hilbert space. To this aim, let us define
\begin{equation*}
	q(t)(\theta)=y(t,\theta), u(t)(\theta)=v(t,\theta), t\in I, \theta\in [0,\pi].
\end{equation*}
We define the operator
$A: D(A)\subset X\to X$ as
\begin{equation}
	Ax(\theta)=x^{\prime\prime}(\theta), \theta\in [0,\pi], x\in X,
\end{equation}
where
\begin{equation}
	D(A)=W^{2,p}([0,\pi], \mathbb{R})\cap W_0^{1,p}([0,\pi], \mathbb{R}).
\end{equation}
Moreover, the spectrum of the operator $A$ is given by $\{-n^2: n\in \mathbb{N}\}$. Then, for every $x\in D(A)$, the operator $A$ can be written as
\begin{equation}
	Ax=\sum_{n=1}^{\infty}-n^2\langle x, w_n\rangle w_n, \langle x,w_n\rangle=\int_{0}^{\pi}x(\theta)w_n(\theta)d\theta,
\end{equation}
where $w_n(\theta)=\sqrt{\frac{2}{\pi}}\sin(n\theta)$ are the normalized eigenfunctions (with respect to the $L^2$ norm) of the operator $A$ corresponding to the eigenvalues $-n^2, n\in \mathbb{N}$. The strongly continuous semigroup $\{T(t)\}_{t\ge 0}$ generated by the operator $A$ is given by
\begin{equation}
	T(t)x=\sum_{n=1}^{\infty}e^{-n^2t}\langle x, w_n\rangle w_n, x\in X.
\end{equation}
Consider the operator $B:U=L^2([0,\pi], \mathbb{R})\to X$ defined as
\begin{equation}
	B(z)(\theta)=\int_{0}^{\pi}K(\theta,\omega)z(\omega)d\omega, z\in U, t\in I, \theta\in [0,\pi].
\end{equation}
Then clearly, the operator $B$ is well-defined and bounded linear. 
We now define $H: X^*\to X$ by
\begin{equation}
	H(x^*)(\theta)=\int_{0}^{\pi}L(\theta,\omega)x^*(\omega)d\omega, x^*\in X^*, \theta\in [0,\pi].
\end{equation}
Consider the function $\Lambda: I\times L^p([0,\pi],\mathbb{R})\to \mathbb{R}$ defined by
\begin{equation}\label{Lambda}
	\Lambda(t,x)=\int_{0}^{\pi}F(t,\theta,x(\theta))d\theta, t\in I, x\in L^p([0,\pi],\mathbb{R}).
\end{equation}
Under the Hypotheses (f1)-(f4), arguing as in the proof of Theorem 3.47 \cite{MR2976197}, we derive the following Lemma.
\begin{lemma}\label{Lemma 5.1}
	The function $\Lambda$ defined in \eqref{Lambda} satisfies the following.
	\begin{itemize}
		\item[($\Lambda$1)] $\Lambda(t,\cdot)$ is well defined and finite on $L^p([0,\pi],\mathbb{R})$ for a.e. $t\in I$.
		\item[($\Lambda$2)] $\Lambda(\cdot, x)$ is measurable on $I$ for all $x\in L^p([0,\pi],\mathbb{R})$.
		\item[($\Lambda$3)] $\Lambda(t,\cdot)$ is Lipschitz on bounded subsets of $L^p([0,\pi],\mathbb{R})$ a.e. $t\in I$.
		\item[($\Lambda$4)] For all $x\in L^p([0,\pi],\mathbb{R}), v\in L^p([0,\pi],\mathbb{R})$ a.e. $t\in I$ we have
		\begin{equation}
			\Lambda^0(t,x;v)= \int_{0}^{\pi}F^0(t,\theta, x(\theta);v(\theta))d\theta.
		\end{equation}
		\item[($\Lambda$5)] For all $x\in L^p([0,\pi],\mathbb{R})$ a.e. $t\in I$ we have
		\begin{equation}
			\partial \Lambda(t,x)= \int_{0}^{\pi}\partial F(t,\theta,x(\theta))d\theta.
		\end{equation}
		\item[($\Lambda$6)] For all $x\in L^p([0,\pi],\mathbb{R})$ a.e. $t\in I$ we have
		\begin{equation}
			\norm{\partial \Lambda(t,x)}\le \gamma(t), 
		\end{equation}
		where $\gamma(t)=\pi^{\frac{1}{p^{\prime}}}\bar{\alpha}(t)$ and $\frac{1}{p}+\frac{1}{p^{\prime}}=1$.
	\end{itemize}
\end{lemma}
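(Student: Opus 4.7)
The plan is to verify the six items in order, closely paralleling the Aubin--Clarke integration calculus developed in the proof of \cite[Theorem 3.47]{MR2976197}. The starting point is the pointwise Lipschitz bound furnished by Lebourg's mean value theorem together with Hypothesis (f3): for a.e. $(t,\theta)\in I\times[0,\pi]$ and all $r,r'\in\mathbb{R}$,
\begin{equation*}
|F(t,\theta,r)-F(t,\theta,r')|\le \bar{\alpha}(t)\,|r-r'|.
\end{equation*}

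Items $(\Lambda 1)$--$(\Lambda 3)$ are then essentially routine. For $(\Lambda 1)$, the above bound together with (f1) shows that $\theta\mapsto F(t,\theta,x(\theta))$ is measurable and dominated by $|F(t,\theta,0)|+\bar{\alpha}(t)|x(\theta)|$, which is integrable by H\"older since $L^p\subset L^1$ on a bounded interval. For $(\Lambda 2)$, the Carath\'eodory nature of $F$ (measurability in $(t,\theta)$ by (f1), continuity in $r$ by (f2)) guarantees that $(t,\theta)\mapsto F(t,\theta,x(\theta))$ is jointly measurable, and Fubini then yields measurability of $t\mapsto\Lambda(t,x)$. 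For $(\Lambda 3)$, combining the Lipschitz estimate with H\"older gives
\begin{equation*}
|\Lambda(t,x)-\Lambda(t,y)|\le\bar{\alpha}(t)\int_0^\pi|x(\theta)-y(\theta)|\,d\theta\le \pi^{1/p'}\bar{\alpha}(t)\,\norm{x-y}_{L^p}.
\end{equation*}

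The heart of the argument, and the main obstacle, lies in $(\Lambda 4)$ and $(\Lambda 5)$; generically only the inclusion $\partial\Lambda(t,x)\subseteq\int_0^\pi\partial F(t,\theta,x(\theta))\,d\theta$ holds, and Hypothesis (f4) is exactly what promotes this to equality. For the $\le$ half of $(\Lambda 4)$, fix $x,v\in L^p$ and take sequences $\epsilon_n\downarrow 0$ and $\xi_n\to x$ in $L^p$ with $\xi_n\to x$ pointwise a.e.\ along a subsequence. The difference quotient
\begin{equation*}
\frac{F(t,\theta,\xi_n(\theta)+\epsilon_n v(\theta))-F(t,\theta,\xi_n(\theta))}{\epsilon_n}
\end{equation*}
is dominated by $\bar{\alpha}(t)|v(\theta)|\in L^1$, so a reverse Fatou argument under the integral yields $\Lambda^0(t,x;v)\le\int_0^\pi F^0(t,\theta,x(\theta);v(\theta))\,d\theta$. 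For the reverse inequality, the regularity hypothesis (f4) identifies $F^0(t,\theta,x(\theta);v(\theta))$ with the one-sided directional derivative $F'(t,\theta,x(\theta);v(\theta))$, so dominated convergence along $\epsilon_n\downarrow 0$ with $\xi_n\equiv x$ gives $\Lambda'(t,x;v)=\int_0^\pi F^0(t,\theta,x(\theta);v(\theta))\,d\theta$; combined with the universal bound $\Lambda'(t,x;v)\le\Lambda^0(t,x;v)$, this completes $(\Lambda 4)$ and simultaneously establishes that $\Lambda(t,\cdot)$ is itself regular. Item $(\Lambda 5)$ then follows by exploiting the support-function characterisation $\partial\Lambda(t,x)=\{\xi\in L^{p'}:\langle\xi,v\rangle\le\Lambda^0(t,x;v)\text{ for all }v\in L^p\}$ and interchanging the integral with the duality pairing $\langle\xi,v\rangle=\int_0^\pi\xi(\theta)v(\theta)\,d\theta$, using regularity to realise pointwise selectors from $\partial F(t,\theta,x(\theta))$ as bona fide elements of $L^{p'}$.

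Finally, $(\Lambda 6)$ is an immediate consequence: any $\xi\in\partial\Lambda(t,x)$ can, by $(\Lambda 5)$, be represented so that $\xi(\theta)\in\partial F(t,\theta,x(\theta))$ a.e., whence $|\xi(\theta)|\le\bar{\alpha}(t)$ pointwise and therefore
\begin{equation*}
\norm{\xi}_{L^{p'}}\le\left(\int_0^\pi\bar{\alpha}(t)^{p'}\,d\theta\right)^{1/p'}=\pi^{1/p'}\bar{\alpha}(t)=\gamma(t).
\end{equation*}
The only conceptual hurdle throughout is $(\Lambda 4)$--$(\Lambda 5)$: without the regularity assumption (f4) one would only obtain an inclusion in $(\Lambda 5)$, which would be insufficient for the sharp norm bound in $(\Lambda 6)$ needed to identify the abstract Hypothesis (G3) in the concrete application.
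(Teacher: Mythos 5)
Your proposal is correct and takes essentially the same route as the paper, which proves the lemma simply by invoking the Aubin--Clarke argument of \cite[Theorem 3.47]{MR2976197}; your reconstruction (Lebourg bound from (f3), reverse Fatou for $\Lambda^0(t,x;v)\le\int_0^\pi F^0\,d\theta$, regularity (f4) plus dominated convergence for the reverse inequality and for equality of subdifferentials) is exactly the argument that citation carries out. The only inaccuracy is your closing aside: the inclusion $\partial\Lambda(t,x)\subseteq\int_0^\pi\partial F(t,\theta,x(\theta))\,d\theta$, which holds even without (f4), already yields the bound in $(\Lambda 6)$; regularity is needed only to upgrade the inclusion to the equalities in $(\Lambda 4)$--$(\Lambda 5)$.
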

By the definition of the generalized Clarke subgradient, we obtain from \eqref{inclusion},
\begin{equation}
	f(t,\theta)v(\theta)\le F^0(t,\theta,q(t)(\theta);v(\theta)), \forall v\in L^p([0,\pi],\mathbb{R}), t\in I.
\end{equation}
Therefore,
\begin{equation}\label{5.17}
	\int_{0}^{\pi} f(t,\theta)v(\theta)d\theta\le \int_{0}^{\pi}F^0(t,\theta,q(t)(\theta);v(\theta))d\theta, t\in I, v\in L^p([0,\pi],\mathbb{R}).
\end{equation}
It is known that the duality pairing between $L^p([0,\pi],\mathbb{R})$ and $L^{p^{\prime}}([0,\pi],\mathbb{R})$ is given by
\begin{equation}
	\langle v,v^*\rangle_{L^p([0,\pi],\mathbb{R}),L^{p^{\prime}}([0,\pi],\mathbb{R})}=\int_{0}^{\pi}v^*(\theta)v(\theta)d\theta, v\in L^p([0,\pi],\mathbb{R}), v^*\in L^{p^{\prime}}([0,\pi],\mathbb{R}). 
\end{equation}
Therefore, we can rewrite equation \eqref{5.17} as
\begin{equation}\label{5.19}
	\langle v,f(t,\cdot)\rangle_{L^p([0,\pi],\mathbb{R}),L^{p^{\prime}}([0,\pi],\mathbb{R})}\le \int_{0}^{\pi}F^0(t,\theta,q(t)(\theta);v(\theta))d\theta, t\in I, v\in L^p([0,\pi],\mathbb{R}).
\end{equation}
Using Lemma \ref{Lemma 5.1} we obtain from \eqref{5.19}
\begin{equation}
	\langle v,f(t,\cdot)\rangle_{L^p([0,\pi],\mathbb{R}),L^{p^{\prime}}([0,\pi],\mathbb{R})}\le \Lambda^0(t,q(t);v), t\in I, v\in L^p([0,\pi],\mathbb{R}).
\end{equation}
The definition of the Clarke generalized subgradient confirms that $f(t,\cdot)\in \partial \Lambda(t,q(t)), t\in I$.

Therefore, the abstract reformulation of equation \eqref{W1} can be given as the following semilinear evolution inclusion in the Banach space $X=L^p([0,\pi],\mathbb{R}), p\ge 2$:
\begin{equation}\label{AF}
	\begin{cases}
		{}^C D_t^\alpha q(t)\in Aq(t)+Bu(t)+H\partial \Lambda(t, q(t)), t\in I\\
		q(0)= x_0.
	\end{cases}
\end{equation}
Of course, the solutions to Problem \eqref{AF} give rise to solutions for \eqref{W1}. 

We now prove the fractional linear control system \eqref{Linear} associated with Problem \eqref{AF} is approximately controllable. In accordance with Theorem \ref{ACOF} we show
\begin{equation}
	B^*T_\alpha^*(a-t)y^*=0~\text{for}~t\in I\Rightarrow x^*=0.
\end{equation}
The adjoint operator $B^*: L^{p^{\prime}}([0,\pi],\mathbb{R})\to L^2([0,\pi], \mathbb{R})$ is given by
\begin{equation}
	B^*x^*(\theta)=\int_{0}^{\pi}K(\theta,\omega)x^*(\omega)d\omega, x^*\in L^{p^{\prime}}([0,\pi],\mathbb{R}), \theta\in [0,\pi].
\end{equation}
We now prove the operator $B^*$ is one-to-one. It is clear that the operator $B^*$ is compact. Therefore, the operator $B^*$ is not invertible, and $0$ is not an eigenvalue of $B^*$. If the map $B^*$ is not one to one then there exists $x^*\in L^{p^{\prime}}([0,\pi],\mathbb{R}), x^*\neq 0$ such that $B^*x^*=0$, which contradict the fact that $0$ is not an eigenvalue of $B^*$. Hence, the map $B^*$ is one-to-one. Therefore, we have
\begin{align}\label{Control condition}
	B^*T_\alpha^*(a-t)y^*=0,~ t\in I\Rightarrow T_\alpha^*(a-t)x^*=0. 
\end{align}
With the help of equation \eqref{Talpha} we obtain
\begin{align}\label{alpha alpha}
	T_\alpha^*(a-t)x^*=\alpha \int_{0}^{\infty}\tau\xi_{\alpha}(\tau)\sum_{n=1}^{\infty}e^{-n^2(a-t)^{\alpha}\tau}\langle x^*, w_n\rangle w_n d\tau=0, ~t\in I.
\end{align}
It follows from expression \eqref{alpha alpha},
\begin{align*}
	\left\langle \sum_{n=1}^{\infty}e^{-n^2(a-t)^{\alpha}\tau}\langle x^*, w_n\rangle w_n, w_n\right\rangle=0, ~\text{for all}~n\in \mathbb{N},
\end{align*}
which implies 
\begin{align*}
	\sum_{n=1}^{\infty}e^{-n^2(a-t)^{\alpha}\tau}\langle x^*, w_n\rangle \left\langle w_n, w_n\right\rangle=0, ~\text{for all}~n\in \mathbb{N}, ~t\in I, ~\tau\in (0,\infty),
\end{align*}
which further implies
\begin{align*}
	\sum_{n=1}^{\infty}e^{-n^2(a-t)^{\alpha}\tau}\langle x^*, w_n\rangle =0, ~\text{for all}~n\in \mathbb{N}, ~t\in I, ~\tau\in (0,\infty).
\end{align*}
Then by \cite[Lemma 3.14]{MR516812} we obtain $\langle x^*,w_n\rangle=0$ for all $n\in \mathbb{N}$ and consequently we have $x^*=0$. This completes the proof.

Thus we verify that Hypothesis (T) and (U) holds. Also, by Lemma \ref{Lemma 5.1} the function $\Lambda$ satisfies Hypotheses ($\Lambda$1)-($\Lambda$6). Note that based on Hypotheses ($\Lambda$3) and ($\Lambda$6), by virtue of Lebourg Mean Value Theorem \cite[Theorem 2.3.7]{MR1058436} the map $\Lambda(t,\cdot):X\to\mathbb{R}$ is Lipschitz continuous. Therefore, the function $F(t,x)=\Lambda(t,x)$ satisfies all the conditions of Theorem \ref{MR}. Hence, by Theorem \ref{MR}, the control system \eqref{W1} is approximately controllable in $I$. 
\section{Appendix}
We start by introducing uniformly convex and uniformly smooth spaces.
\begin{definition}\label{uniformly convex}
	A normed space $X$ is said to be uniformly convex if for any $\epsilon\in (0,2]$ there exists a $\delta>0$ such that if $x,y\in X$ with $\norm{x}\le 1, \norm{y}\le 1$ and $\norm{x-y}\ge \epsilon$ then,
	\begin{equation}
		\norm{\frac{x+y}{2}}\le 1-\delta.
	\end{equation}
\end{definition}
\begin{definition}\label{uniformly smooth}
	A Banach space $X$ is called uniformly smooth if for any $\epsilon>0$ there exists $\delta>0$ such that for all $x,y\in X$ with $\norm{x}=1, \norm{y}\le \delta$ the inequality
	\begin{equation}
		\frac{1}{2}\left[\norm{x+y}+\norm{x-y}\right]-1\le \epsilon \norm{y},
	\end{equation} 
	holds.
\end{definition}
The simplest example of a space that is both uniformly convex and uniformly smooth is the Hilbert space.

We immediately remark that there is a duality relationship between uniform smoothness and uniform convexity.
\begin{theorem}\label{duality relationship}
	Let $X$ be a Banach space. Then, the following statements are equivalent.
	\begin{itemize}
		\item[(i)] $X$ is uniformly smooth;
		\item[(ii)] $X^*$ is uniformly convex.
	\end{itemize}
\end{theorem}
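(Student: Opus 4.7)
The plan is to recast both conditions in terms of moduli and then use the Lindenstrauss duality formula to pass between them. Define the modulus of convexity
\begin{equation*}
\delta_{X}(\epsilon)=\inf\left\{1-\left\|\tfrac{x+y}{2}\right\|:\|x\|\le 1,\ \|y\|\le 1,\ \|x-y\|\ge \epsilon\right\},\qquad \epsilon\in[0,2],
\end{equation*}
and the modulus of smoothness
\begin{equation*}
\rho_{X}(\tau)=\sup\left\{\tfrac{\|x+y\|+\|x-y\|}{2}-1:\|x\|=1,\ \|y\|\le\tau\right\},\qquad \tau>0.
\end{equation*}
Reading Definition \ref{uniformly convex} as ``$\delta_{X}(\epsilon)>0$ for every $\epsilon\in(0,2]$'' and Definition \ref{uniformly smooth} as ``$\lim_{\tau\to 0^{+}}\rho_{X}(\tau)/\tau=0$'' is immediate from the definitions by rearranging the quantifiers, so I will take these equivalent formulations as the working criteria.

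The heart of the argument is the identity
\begin{equation}\label{LDP}
\rho_{X}(\tau)=\sup_{0\le \epsilon\le 2}\left\{\tfrac{\tau\epsilon}{2}-\delta_{X^{*}}(\epsilon)\right\}.
\end{equation}
Once \eqref{LDP} is established, both implications drop out by elementary analysis. For (i)$\Rightarrow$(ii), suppose $X^{*}$ were not uniformly convex; then there is $\epsilon_{0}\in(0,2]$ with $\delta_{X^{*}}(\epsilon_{0})=0$, which via \eqref{LDP} forces $\rho_{X}(\tau)\ge\tau\epsilon_{0}/2$, contradicting $\rho_{X}(\tau)/\tau\to 0$. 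For (ii)$\Rightarrow$(i), given $\eta>0$ pick $\epsilon_{0}\in(0,2\eta)$; for every $\tau<\delta_{X^{*}}(\epsilon_{0})$, split the supremum in \eqref{LDP} at $\epsilon_{0}$: on $[0,\epsilon_{0}]$ the quantity is bounded by $\tau\epsilon_{0}/2$, and on $[\epsilon_{0},2]$ by $\tau-\delta_{X^{*}}(\epsilon_{0})<0$ (using monotonicity of $\delta_{X^{*}}$), hence $\rho_{X}(\tau)/\tau\le \eta$.

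The actual work is proving \eqref{LDP}, which I would do by establishing the two inequalities separately. For $\rho_{X}(\tau)\le\sup_{\epsilon}\{\tau\epsilon/2-\delta_{X^{*}}(\epsilon)\}$, fix $x,y\in X$ with $\|x\|=1$, $\|y\|\le\tau$ and use Hahn--Banach to pick $x^{*},y^{*}\in X^{*}$ with $\|x^{*}\|=\|y^{*}\|=1$, $\langle x+y,x^{*}\rangle=\|x+y\|$ and $\langle x-y,y^{*}\rangle=\|x-y\|$; rewriting gives
\begin{equation*}
\|x+y\|+\|x-y\|=\langle x,x^{*}+y^{*}\rangle+\langle y,x^{*}-y^{*}\rangle\le \|x^{*}+y^{*}\|+\tau\|x^{*}-y^{*}\|,
\end{equation*}
and setting $\epsilon=\|x^{*}-y^{*}\|$ together with $\|(x^{*}+y^{*})/2\|\le 1-\delta_{X^{*}}(\epsilon)$ yields the bound. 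The reverse inequality requires, for each $\epsilon$, nearly extremal pairs $x^{*},y^{*}\in B_{X^{*}}$ with $\|x^{*}-y^{*}\|\ge\epsilon$ and $\|(x^{*}+y^{*})/2\|$ close to $1-\delta_{X^{*}}(\epsilon)$, and then the construction of $x,y\in B_{X}$ that realize the duality pairings $\langle x,x^{*}+y^{*}\rangle$ and $\langle y,x^{*}-y^{*}\rangle$ near their respective norms, via Goldstine's theorem applied in $X^{**}$ followed by an approximation back into $X$.

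The main obstacle is precisely this second inequality in \eqref{LDP}: the asymmetry that $\rho_{X}$ is defined by elements of $X$ while $\delta_{X^{*}}$ is defined by elements of $X^{*}$ means one cannot directly pair optimizers, and one must invoke Goldstine/weak$^{*}$-density plus a careful $\varepsilon$-perturbation to produce the required $X$-vectors from data living a priori in $X^{**}$. Everything else in the proof is routine estimation from \eqref{LDP}.
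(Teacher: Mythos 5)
The paper states this theorem without proof --- it is quoted as a classical fact (the \v{S}mulian--Day--Lindenstrauss duality theorem), with the equivalent reformulations you rely on already recorded in Remark \ref{Remarkk}(3)--(4) --- so there is no in-paper argument to compare against; judged on its own, your proof is correct and follows the standard route via the Lindenstrauss duality formula $\rho_X(\tau)=\sup_{0\le\epsilon\le 2}\{\tau\epsilon/2-\delta_{X^*}(\epsilon)\}$, and both elementary deductions of (i)$\Rightarrow$(ii) and (ii)$\Rightarrow$(i) from that formula are sound (monotonicity of $\delta_{X^*}$ is indeed free with the ``$\|x-y\|\ge\epsilon$'' form of the modulus you adopt). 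One substantive remark: you overestimate the difficulty of the reverse inequality in the duality formula. The vectors $x,y\in B_X$ you need must nearly norm the functionals $x^*+y^*$ and $x^*-y^*$, and since these are elements of $X^*$, near-norming vectors in $B_X$ exist by the very definition of the dual norm $\|z^*\|=\sup_{x\in B_X}\langle x,z^*\rangle$; no appeal to Goldstine's theorem or any passage through $X^{**}$ is required (one then normalizes $x$ to the unit sphere, which only increases the relevant pairing). Goldstine, or the principle of local reflexivity, only enters if one instead proves the companion formula $\rho_{X^*}(\tau)=\sup\{\tau\epsilon/2-\delta_X(\epsilon)\}$ and tries to obtain your identity by substituting $X^*$ for $X$, which produces $\delta_{X^{**}}$ and forces a descent from $X^{**}$ back to $X$; your direct two-inequality argument avoids this entirely. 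The only other caveat is cosmetic: the paper's definition of $\delta_X$ uses $\|x\|=\|y\|=1$ and $\|x-y\|=\epsilon$ rather than your $\|x\|,\|y\|\le 1$, $\|x-y\|\ge\epsilon$ version; the two moduli are simultaneously positive, so the equivalence being proved is unaffected, but you should state explicitly that you are working with the latter convention.
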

In the realm of Banach spaces, two crucial measures are the modulus of convexity $\delta_X$ and the modulus of smoothness $\rho_X$.	\begin{definition}\cite{MR394135}
	If $X$ is a Banach space, its modulus of convexity $\delta_X$ is defined as follows
	\begin{equation}
		\forall \epsilon\in [0,2], \delta_X(\epsilon)=\inf\left\{1-\norm{\frac{x+y}{2}}: \norm{x}= 1, \norm{y}= 1, \norm{x-y}= \epsilon\right\}.
	\end{equation} 
\end{definition}
\begin{definition}\cite{MR394135}
	If $X$ is a Banach space, its modulus of smoothness $\rho_X$ is defined by
	\begin{equation}
		\forall t\in [0,\infty), \rho_X(t)=\sup\left\{\frac{\norm{x+ty}+\norm{x-ty}}{2}-1:\norm{x}=\norm{y}=1\right\}.
	\end{equation} 
\end{definition}
\begin{definition}\cite{MR394135}
	We shall say that a Banach space $X$ is $p$-smooth, $1<p\le 2$ if there exists an equivalent norm on $X$ for which the modulus of smoothness $\rho_X$ satisfies 
	\begin{equation}\label{mos}
		\forall t>0, \rho_X(t)\le Ct^p,
	\end{equation}
	for some constant $C>0$.
\end{definition}
\begin{definition}\cite{MR394135}
	We shall say that a Banach space $X$ is $p^{\prime}$-convex, $2\le p^{\prime}<\infty$ if there exists an equivalent norm on $X$ for which the modulus of convexity $\delta_X$ satisfies 
	\begin{equation}\label{moc}
		\forall \epsilon>0,	\delta_X(\epsilon)\ge  C\epsilon^{p^{\prime}},
	\end{equation}
	for some constant $C>0$.
\end{definition}
Various insights arise from these definitions, including relationships between smoothness and convexity, conditions for uniform smoothness and uniform convexity, and the reflexive nature of certain spaces.
\begin{remark}
	\begin{itemize}
		\item[(i)] If the modulus of smoothness $\rho_X$ satisfies \eqref{mos}, then from Dvoretzky's Theorem \cite{dvoretzky1964some}, we necessarily have $1<p\le 2$.
		\item[(2)] If the modulus of convexity $\delta_X$ satisfies \eqref{moc}, then it follows from Dvoretzky's Theorem \cite{dvoretzky1964some} that necessarily $p^{\prime}\ge 2$. 
	\end{itemize}
\end{remark}
We now introduce super-reflexive spaces.
\begin{definition}\cite{MR394135}
	A Banach space $X$ is super-reflxeive if and only if there exists an integer $n$ and an $\epsilon>0$ such that for every $n$-tuple $(x_1, x_2, \cdot \cdot \cdot, x_n)$ in the unit ball of $X$:
	\begin{equation}
		\inf_{1\le k\le n}\norm{\sum_{1\le i\le k}x_i-\sum_{k<i\le n}x_i}\le n(1-\epsilon).
	\end{equation}
\end{definition}
\begin{lemma}
	If a Banach space $X$ is such that $\delta_X(2-\epsilon)>0$ for some $\epsilon>0$, then $X$ is super-reflexive.
\end{lemma}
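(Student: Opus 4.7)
The plan is to verify the definition of super-reflexivity with the minimal choice $n = 2$. For $n = 2$, the two terms in the infimum (corresponding to $k = 1$ and $k = 2$) give $\norm{x_1 - x_2}$ and $\norm{x_1 + x_2}$ respectively, the latter because the second sum over $\{i : 2 < i \le 2\}$ is empty. So it suffices to produce an $\epsilon > 0$ such that
\begin{equation*}
\min\bigl(\norm{x_1 - x_2},\ \norm{x_1 + x_2}\bigr) \;\le\; 2(1 - \epsilon) \quad \text{for all } x_1, x_2 \in B_X,
\end{equation*}
which is the classical uniform non-squareness property of James.

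I would argue by contradiction. If no such $\epsilon$ existed, I could extract sequences $(x_n), (y_n) \subset B_X$ with $\norm{x_n - y_n} \to 2$ and $\norm{x_n + y_n} \to 2$. From $\norm{x_n + y_n} \le \norm{x_n} + \norm{y_n} \le 2$, the second convergence forces $\norm{x_n}, \norm{y_n} \to 1$. Setting $\tilde{x}_n = x_n / \norm{x_n}$ and $\tilde{y}_n = y_n / \norm{y_n}$ moves the sequences onto the unit sphere at an $o(1)$ cost, since $\norm{\tilde{x}_n - x_n} = 1 - \norm{x_n} \to 0$, and similarly for $\tilde{y}_n$. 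Consequently $\norm{\tilde{x}_n - \tilde{y}_n} \to 2$ and $\norm{\tilde{x}_n + \tilde{y}_n} \to 2$ on the unit sphere.

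To close the argument, fix the $\epsilon_0 > 0$ supplied by the hypothesis, so $\delta_X(2 - \epsilon_0) = \delta_0 > 0$. For all sufficiently large $n$, $\norm{\tilde{x}_n - \tilde{y}_n} \ge 2 - \epsilon_0$, and combining the monotonicity of $\delta_X$ on $[0,2]$ with its defining inequality evaluated at the unit vectors $\tilde{x}_n, \tilde{y}_n$ gives
\begin{equation*}
\delta_0 \;=\; \delta_X(2 - \epsilon_0) \;\le\; \delta_X\bigl(\norm{\tilde{x}_n - \tilde{y}_n}\bigr) \;\le\; 1 - \norm{\tfrac{\tilde{x}_n + \tilde{y}_n}{2}}.
\end{equation*}
Passing to the limit, the right-hand side tends to $0$ because $\norm{\tilde{x}_n + \tilde{y}_n} \to 2$, contradicting $\delta_0 > 0$. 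Hence the super-reflexivity condition holds with $n = 2$.

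The most delicate point is the use of monotonicity of $\delta_X$, since the excerpt's definition uses the equality constraint $\norm{x-y} = \epsilon$ rather than $\norm{x-y} \ge \epsilon$. Monotonicity on $[0,2]$ is a standard but not entirely trivial Banach-space fact; alternatively, the argument goes through verbatim with the equivalent modulus defined by $\norm{x-y} \ge \epsilon$, whose nondecreasing nature is immediate by inspection. Apart from this subtlety, the proof is driven entirely by a two-step extraction (near-equality in the triangle inequality $\Rightarrow$ norms tend to $1$) followed by a normalization and two applications of the triangle inequality, so the conceptual content lies in identifying that the $n=2$ version of the super-reflexivity criterion is nothing but uniform non-squareness.
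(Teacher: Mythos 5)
The paper itself gives no proof of this lemma: it is stated as a known fact (essentially James' theorem that uniformly non-square spaces are super-reflexive, packaged with the $n$-tuple characterization that the Appendix adopts as the \emph{definition} of super-reflexivity). Judged against that definition, your argument is correct: you read off the $n=2$ case correctly as uniform non-squareness (for $k=2$ the second sum is empty, giving $\norm{x_1+x_2}$), and the contradiction argument is sound — near-equality in the triangle inequality forces $\norm{x_n},\norm{y_n}\to 1$, normalization costs only $1-\norm{x_n}\to 0$, and then the modulus of convexity applied to the unit vectors is incompatible with $\norm{\tilde{x}_n+\tilde{y}_n}\to 2$.

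The one delicate point is the one you flag yourself, and it deserves a slightly sharper treatment. The paper's $\delta_X$ is defined with the equality constraint $\norm{x-y}=\epsilon$ on the unit sphere, and your final chain needs $\delta_X\bigl(\norm{\tilde{x}_n-\tilde{y}_n}\bigr)\ge\delta_X(2-\epsilon_0)$, i.e. monotonicity of that modulus. This is a true, classical fact, but note that your proposed workaround — switching to the modulus defined with $\norm{x-y}\ge\epsilon$ over the ball — is not quite ``verbatim'': the hypothesis $\delta_X(2-\epsilon_0)>0$ is stated for the equality version, and since the $\ge$-version is an infimum over a strictly larger set, transferring positivity to it is precisely the (equally classical, equally nontrivial) fact that the two moduli coincide. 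So either route invokes one standard lemma about $\delta_X$ (Figiel, or Lindenstrauss--Tzafriri), which you should cite explicitly to close the argument. Finally, be aware that the genuine depth of the statement in the usual formulation of super-reflexivity (via finite representability) is hidden inside the paper's chosen definition; proving only the $n=2$ condition is legitimate here exactly because the Appendix takes James' characterization as the definition.
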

\begin{remark}\cite{MR394135}\label{Remarkk}
	\begin{itemize}
		\item[(1)] If $X^*$ is $p$-smooth, then $X$ is $p^{\prime}$-convex with $\frac{1}{p}+\frac{1}{p^{\prime}}=1$.
		\item[(2)] Every uniformly convex or uniformly smooth Banach space is $p^{\prime}$-convex and $p$-smooth for some $p^{\prime}<\infty$ and $p>1$. 
		\item[(3)] A normed space $X$ is uniformly smooth if and only if 
		\begin{equation}
			\lim_{t\to 0^+}\frac{\rho_X(t)}{t}=0.
		\end{equation}
		\item[(4)] A Banach space $X$ is uniformly convex if and only if $\delta_X(\epsilon)>0$ for all $\epsilon\in (0,2]$.
		\item[(5)] Every uniformly convex or uniformly smooth Banach space is reflexive.
		\item[(6)] Every uniformly smooth or uniformly convex space is super-reflexive.
	\end{itemize}
\end{remark}
\begin{ex}
	Noteworthy examples of super-reflexive spaces include
	the spaces $L^p, 1<p<\infty$ and $W^{m,p}, m\in \mathbb{N}\cup\{0\}, 1<p<\infty$. 
\end{ex}
Furthermore, the following proposition asserts that any super-reflexive Banach space can be equipped with an equivalent norm that is both uniformly convex and uniformly smooth, featuring moduli of convexity and smoothness of power type. 
\begin{proposition}\label{Propos}\cite[Proposition 5.2]{MR1211634}
	Let $X$ be a super-reflexive Banach space. Then, an equivalent norm exists on $X$, which is both uniformly convex and uniformly smooth, with moduli of convexity and smoothness in power type.
\end{proposition}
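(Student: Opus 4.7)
The proposition is the celebrated Enflo--Pisier renorming theorem. Enflo (1972) first proved that a super-reflexive space admits an equivalent uniformly convex norm, and Pisier (1975) subsequently strengthened this to yield moduli of convexity and smoothness of power type. My plan is to follow Pisier's martingale-theoretic route, which delivers the quantitative power-type estimate in one stroke rather than first renorming qualitatively and then trying to upgrade the moduli.

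First, I would establish James's combinatorial characterization of super-reflexivity: $X$ is super-reflexive if and only if there exist $\varepsilon > 0$ and $N \in \mathbb{N}$ such that the unit ball of $X$ contains no $\varepsilon$-separated finite dyadic tree of depth $N$. Here by an $(\varepsilon, N)$-tree I mean a family $\{x_I\}$ indexed by binary words $I \in \{0,1\}^{\leq N}$ in the unit ball with $x_I = \tfrac{1}{2}(x_{I0} + x_{I1})$ and $\|x_{I0} - x_{I1}\| \geq \varepsilon$ at every internal node. This step relies on the ultrapower characterization of super-reflexivity (super-reflexive = every space finitely representable in $X$ is reflexive) combined with a pigeonhole compactness argument that extracts such forbidden trees from the unit ball of any non-reflexive finitely representable space.

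Next, I would use the bounded depth of admissible trees to deduce that $X$ has non-trivial martingale cotype. Concretely, I would show that there exist $q < \infty$ and $C > 0$ such that every $X$-valued dyadic martingale $(f_n)_{n=0}^{N}$ on the standard dyadic filtration satisfies
\begin{equation}
\sum_{n=1}^{N} \mathbb{E}\,\|f_n - f_{n-1}\|^{q} \;\leq\; C\,\mathbb{E}\,\|f_N\|^{q}.
\end{equation}
The tree-free hypothesis controls the average branching width of dyadic martingales at each level, and an iteration of Burkholder/James-type flavor promotes this level-wise decay to the global $q$-summability above. This passage from the qualitative combinatorial obstruction to a quantitative martingale inequality is the heart of the argument and the main obstacle.

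Finally, I would invoke Pisier's equivalence between $q$-martingale cotype and power-type uniform convexity: having martingale cotype $q$ is equivalent to the existence of an equivalent norm $|||\cdot|||$ on $X$ whose modulus of convexity satisfies $\delta_X(\varepsilon) \geq c\varepsilon^{q}$. Since super-reflexivity is self-dual, the dual $X^*$ is also super-reflexive, and applying the same construction to $X^*$ yields an equivalent norm of power-type convexity on $X^*$; by Theorem \ref{duality relationship} together with the duality between power-type convexity on $X^*$ and power-type smoothness on $X$, we obtain a modulus $\rho_X(t) \leq C't^p$ with $\frac{1}{p} + \frac{1}{q} = 1$. A final averaging or simultaneous construction produces a single equivalent norm carrying both estimates, completing the proof. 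The hard part throughout is the martingale step: converting James's qualitative tree obstruction into a power-type decay requires the delicate iterative argument that forms the core of Pisier's original paper.
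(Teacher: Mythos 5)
The paper does not prove this proposition at all: it is quoted verbatim from the cited reference (Proposition 5.2 of \cite{MR1211634}) and used as a black box in Remark \ref{RR}, so there is no in-paper argument to compare against. Your outline is a faithful reconstruction of the standard Enflo--Pisier route that the citation points to, and each of your three stages (James's finite-tree characterization of super-reflexivity, the passage to martingale cotype $q$, and Pisier's equivalence between martingale cotype $q$ and an equivalent norm with $\delta_X(\varepsilon)\ge c\varepsilon^{q}$, dualized to get $p$-smoothness) is correct as stated. Two caveats keep this an outline rather than a proof. First, the step you yourself flag as the heart of the matter --- upgrading the qualitative absence of $(\varepsilon,N)$-trees to the quantitative inequality $\sum_n \mathbb{E}\|f_n-f_{n-1}\|^{q}\le C\,\mathbb{E}\|f_N\|^{q}$ --- is precisely the content of Pisier's submultiplicativity argument for the modulus of uniform convexity (classically one first passes through Enflo's qualitative uniformly convex renorming and then shows uniform convexity implies power-type martingale cotype; going directly from the tree condition is possible but is not a routine ``iteration of Burkholder type''). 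Second, the final sentence hides a genuine issue: having one equivalent norm of power-type convexity and another of power-type smoothness does not immediately give a single norm with both; the ``final averaging'' you invoke is Asplund's averaging technique, and one must check it preserves the power-type estimates. Both gaps are fillable from the literature, so the plan is sound, but as written neither of the two load-bearing steps is actually carried out.
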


\bibliographystyle{unsrtnat}
\bibliography{references}  

\begin{thebibliography}{48}
\providecommand{\natexlab}[1]{#1}
\providecommand{\url}[1]{\texttt{#1}}
\expandafter\ifx\csname urlstyle\endcsname\relax
  \providecommand{\doi}[1]{doi: #1}\else
  \providecommand{\doi}{doi: \begingroup \urlstyle{rm}\Url}\fi

\bibitem[Pang and Stewart(2008)]{MR2375486}
Jong-Shi Pang and David~E. Stewart.
\newblock Differential variational inequalities.
\newblock \emph{Math. Program.}, 113\penalty0 (2, Ser. A):\penalty0 345--424,
  2008.
\newblock ISSN 0025-5610.

\bibitem[Panagiotopoulos(1985)]{MR896909}
P.~D. Panagiotopoulos.
\newblock \emph{Inequality problems in mechanics and applications}.
\newblock Birkh\"{a}user Boston, Inc., Boston, MA, 1985.
\newblock ISBN 0-8176-3094-5.
\newblock Convex and nonconvex energy functions.

\bibitem[Panagiotopoulos(1993)]{MR1385670}
P.~D. Panagiotopoulos.
\newblock \emph{Hemivariational inequalities}.
\newblock Springer-Verlag, Berlin, 1993.
\newblock ISBN 3-540-54963-3.
\newblock Applications in mechanics and engineering.

\bibitem[Liu et~al.(2018)Liu, Zeng, and Motreanu]{MR3871422}
Zhenhai Liu, Shengda Zeng, and Dumitru Motreanu.
\newblock Partial differential hemivariational inequalities.
\newblock \emph{Adv. Nonlinear Anal.}, 7\penalty0 (4):\penalty0 571--586, 2018.
\newblock ISSN 2191-9496.

\bibitem[Naniewicz and Panagiotopoulos(1995)]{MR1304257}
Z.~Naniewicz and P.~D. Panagiotopoulos.
\newblock \emph{Mathematical theory of hemivariational inequalities and
  applications}, volume 188 of \emph{Monographs and Textbooks in Pure and
  Applied Mathematics}.
\newblock Marcel Dekker, Inc., New York, 1995.
\newblock ISBN 0-8247-9330-7.

\bibitem[Liu and Papageorgiou(2022)]{MR4375652}
Zhenhai Liu and Nikolaos~S. Papageorgiou.
\newblock Double phase {D}irichlet problems with unilateral constraints.
\newblock \emph{J. Differential Equations}, 316:\penalty0 249--269, 2022.
\newblock ISSN 0022-0396.

\bibitem[Abbas et~al.(2012)Abbas, Benchohra, and
  N'Gu\'{e}r\'{e}kata]{MR2962045}
Sa\"{\i}d Abbas, Mouffak Benchohra, and Gaston~M. N'Gu\'{e}r\'{e}kata.
\newblock \emph{Topics in fractional differential equations}, volume~27 of
  \emph{Developments in Mathematics}.
\newblock Springer, New York, 2012.
\newblock ISBN 978-1-4614-4035-2.

\bibitem[Koeller(1984)]{MR747787}
R.~C. Koeller.
\newblock Applications of fractional calculus to the theory of viscoelasticity.
\newblock \emph{Trans. ASME J. Appl. Mech.}, 51\penalty0 (2):\penalty0
  299--307, 1984.
\newblock ISSN 0021-8936.

\bibitem[Fabrizio(2014)]{MR3146657}
Mauro Fabrizio.
\newblock Fractional rheological models for thermomechanical systems.
  {D}issipation and free energies.
\newblock \emph{Fract. Calc. Appl. Anal.}, 17\penalty0 (1):\penalty0 206--223,
  2014.
\newblock ISSN 1311-0454.

\bibitem[Hilfer(2000)]{MR1890104}
R.~Hilfer, editor.
\newblock \emph{Applications of fractional calculus in physics}.
\newblock World Scientific Publishing Co., Inc., River Edge, NJ, 2000.
\newblock ISBN 981-02-3457-0.

\bibitem[Curtain and Pritchard(1978)]{MR516812}
Ruth~F. Curtain and Anthony~J. Pritchard.
\newblock \emph{Infinite dimensional linear systems theory}, volume~8 of
  \emph{Lecture Notes in Control and Information Sciences}.
\newblock Springer-Verlag, Berlin-New York, 1978.
\newblock ISBN 3-540-08961-6.

\bibitem[Ainseba(2002)]{MR1943766}
Bedr'Eddine Ainseba.
\newblock Exact and approximate controllability of the age and space population
  dynamics structured model.
\newblock \emph{J. Math. Anal. Appl.}, 275\penalty0 (2):\penalty0 562--574,
  2002.
\newblock ISSN 0022-247X.

\bibitem[Singh and Shukla(2023)]{MR4577651}
Ajeet Singh and Anurag Shukla.
\newblock Approximate controllability of the semilinear population dynamics
  system with diffusion.
\newblock \emph{Math. Methods Appl. Sci.}, 46\penalty0 (7):\penalty0
  8418--8429, 2023.
\newblock ISSN 0170-4214.

\bibitem[Ani\c{t}a(2000)]{MR1797596}
Sebastian Ani\c{t}a.
\newblock \emph{Analysis and control of age-dependent population dynamics},
  volume~11 of \emph{Mathematical Modelling: Theory and Applications}.
\newblock Kluwer Academic Publishers, Dordrecht, 2000.
\newblock ISBN 0-7923-6639-5.

\bibitem[Clarke(1990)]{MR1058436}
F.~H. Clarke.
\newblock \emph{Optimization and nonsmooth analysis}, volume~5 of
  \emph{Classics in Applied Mathematics}.
\newblock Society for Industrial and Applied Mathematics (SIAM), Philadelphia,
  PA, second edition, 1990.
\newblock ISBN 0-89871-256-4.

\bibitem[Metzler and Klafter(2000)]{MR1809268}
Ralf Metzler and Joseph Klafter.
\newblock The random walk's guide to anomalous diffusion: a fractional dynamics
  approach.
\newblock \emph{Phys. Rep.}, 339\penalty0 (1):\penalty0 77, 2000.
\newblock ISSN 0370-1573.

\bibitem[Meerschaert and Scalas(2006)]{MR2263769}
Mark~M. Meerschaert and Enrico Scalas.
\newblock Coupled continuous time random walks in finance.
\newblock \emph{Phys. A}, 370\penalty0 (1):\penalty0 114--118, 2006.
\newblock ISSN 0378-4371.

\bibitem[Benson et~al.(2000)Benson, Wheatcraft, and
  Meerschaert]{benson2000application}
David~A Benson, Stephen~W Wheatcraft, and Mark~M Meerschaert.
\newblock Application of a fractional advection-dispersion equation.
\newblock \emph{Water resources research}, 36\penalty0 (6):\penalty0
  1403--1412, 2000.

\bibitem[Burdzy(1993)]{MR1278077}
Krzysztof Burdzy.
\newblock Some path properties of iterated {B}rownian motion.
\newblock In \emph{Seminar on {S}tochastic {P}rocesses, 1992 ({S}eattle, {WA},
  1992)}, volume~33 of \emph{Progr. Probab.}, pages 67--87. Birkh\"{a}user
  Boston, Boston, MA, 1993.

\bibitem[Concezzi and Spigler(2015)]{MR3316530}
Moreno Concezzi and Renato Spigler.
\newblock Some analytical and numerical properties of the {M}ittag-{L}effler
  functions.
\newblock \emph{Fract. Calc. Appl. Anal.}, 18\penalty0 (1):\penalty0 64--94,
  2015.
\newblock ISSN 1311-0454.

\bibitem[Diethelm and Ford(2002)]{diethelm2002analysis}
Kai Diethelm and Neville~J Ford.
\newblock Analysis of fractional differential equations.
\newblock \emph{Journal of Mathematical Analysis and Applications},
  265\penalty0 (2):\penalty0 229--248, 2002.

\bibitem[Baeumer et~al.(2009)Baeumer, Meerschaert, and Nane]{MR2491905}
Boris Baeumer, Mark~M. Meerschaert, and Erkan Nane.
\newblock Brownian subordinators and fractional {C}auchy problems.
\newblock \emph{Trans. Amer. Math. Soc.}, 361\penalty0 (7):\penalty0
  3915--3930, 2009.
\newblock ISSN 0002-9947.

\bibitem[Liu and Li(2015)]{MR3280853}
Zhenhai Liu and Xiuwen Li.
\newblock Approximate controllability for a class of hemivariational
  inequalities.
\newblock \emph{Nonlinear Anal. Real World Appl.}, 22:\penalty0 581--591, 2015.
\newblock ISSN 1468-1218.

\bibitem[Liu et~al.(2015)Liu, Li, and Motreanu]{MR3411720}
Zhenhai Liu, Xiuwen Li, and Dumitru Motreanu.
\newblock Approximate controllability for nonlinear evolution hemivariational
  inequalities in {H}ilbert spaces.
\newblock \emph{SIAM J. Control Optim.}, 53\penalty0 (5):\penalty0 3228--3244,
  2015.
\newblock ISSN 0363-0129.

\bibitem[Li et~al.(2016)Li, Li, and Liu]{MR3512753}
Yunxiang Li, Xiuwen Li, and Yiliang Liu.
\newblock On the approximate controllability for fractional evolution
  hemivariational inequalities.
\newblock \emph{Math. Methods Appl. Sci.}, 39\penalty0 (11):\penalty0
  3088--3101, 2016.
\newblock ISSN 0170-4214.

\bibitem[Liu et~al.(2019)Liu, Wang, and O'Regan]{MR3927856}
Xianghu Liu, JinRong Wang, and D.~O'Regan.
\newblock On the approximate controllability for fractional evolution
  inclusions of {S}obolev and {C}larke subdifferential type.
\newblock \emph{IMA J. Math. Control Inform.}, 36\penalty0 (1):\penalty0 1--17,
  2019.
\newblock ISSN 0265-0754.

\bibitem[Chang and Liu(2020)]{MR4128438}
Yong-Kui Chang and Xiaojing Liu.
\newblock Time-varying integro-differential inclusions with {C}larke
  sub-differential and non-local initial conditions: existence and approximate
  controllability.
\newblock \emph{Evol. Equ. Control Theory}, 9\penalty0 (3):\penalty0 845--863,
  2020.
\newblock ISSN 2163-2472.

\bibitem[Zhao et~al.(2023)Zhao, Liu, and Liu]{MR4560898}
Jing Zhao, Zhenhai Liu, and Yongjian Liu.
\newblock Approximate controllability of non-autonomous second-order evolution
  hemivariational inequalities with nonlocal conditions.
\newblock \emph{Appl. Anal.}, 102\penalty0 (1):\penalty0 23--37, 2023.
\newblock ISSN 0003-6811.

\bibitem[Wang et~al.(2019)Wang, Liu, and O'Regan]{MR3937061}
JinRong Wang, Xianghu Liu, and D.~O'Regan.
\newblock On the approximate controllability for {H}ilfer fractional evolution
  hemivariational inequalities.
\newblock \emph{Numer. Funct. Anal. Optim.}, 40\penalty0 (7):\penalty0
  743--762, 2019.
\newblock ISSN 0163-0563.

\bibitem[Kamenskii et~al.(2001)Kamenskii, Obukhovskii, and Zecca]{MR1831201}
Mikhail Kamenskii, Valeri Obukhovskii, and Pietro Zecca.
\newblock \emph{Condensing multivalued maps and semilinear differential
  inclusions in {B}anach spaces}, volume~7 of \emph{De Gruyter Series in
  Nonlinear Analysis and Applications}.
\newblock Walter de Gruyter \& Co., Berlin, 2001.
\newblock ISBN 3-11-016989-4.

\bibitem[Mig\'{o}rski et~al.(2013)Mig\'{o}rski, Ochal, and Sofonea]{MR2976197}
Stanis\'{l}aw Mig\'{o}rski, Anna Ochal, and Mircea Sofonea.
\newblock \emph{Nonlinear inclusions and hemivariational inequalities},
  volume~26 of \emph{Advances in Mechanics and Mathematics}.
\newblock Springer, New York, 2013.
\newblock ISBN 978-1-4614-4231-8; 978-1-4614-4232-5.
\newblock Models and analysis of contact problems.

\bibitem[Gasi\'{n}ski and Papageorgiou(2016)]{MR3524637}
Leszek Gasi\'{n}ski and Nikolaos~S. Papageorgiou.
\newblock \emph{Exercises in analysis. {P}art 2. {N}onlinear analysis}.
\newblock Problem Books in Mathematics. Springer, Cham, 2016.
\newblock ISBN 978-3-319-27815-5; 978-3-319-27817-9.

\bibitem[Suechoei and Sa~Ngiamsunthorn(2020)]{MR4076740}
Apassara Suechoei and Parinya Sa~Ngiamsunthorn.
\newblock Existence uniqueness and stability of mild solutions for semilinear
  {$\psi$}-{C}aputo fractional evolution equations.
\newblock \emph{Adv. Difference Equ.}, pages Paper No. 114, 28, 2020.
\newblock ISSN 1687-1839.

\bibitem[Glicksberg(1952)]{MR46638}
I.~L. Glicksberg.
\newblock A further generalization of the {K}akutani fixed theorem, with
  application to {N}ash equilibrium points.
\newblock \emph{Proc. Amer. Math. Soc.}, 3:\penalty0 170--174, 1952.
\newblock ISSN 0002-9939.

\bibitem[El-Borai(2002)]{MR1903295}
Mahmoud~M. El-Borai.
\newblock Some probability densities and fundamental solutions of fractional
  evolution equations.
\newblock \emph{Chaos Solitons Fractals}, 14\penalty0 (3):\penalty0 433--440,
  2002.
\newblock ISSN 0960-0779.

\bibitem[Zhou and Jiao(2010)]{MR2579471}
Yong Zhou and Feng Jiao.
\newblock Existence of mild solutions for fractional neutral evolution
  equations.
\newblock \emph{Comput. Math. Appl.}, 59\penalty0 (3):\penalty0 1063--1077,
  2010.
\newblock ISSN 0898-1221.

\bibitem[Zeidler(1990)]{MR1033498}
Eberhard Zeidler.
\newblock \emph{Nonlinear functional analysis and its applications. {II}/{B}}.
\newblock Springer-Verlag, New York, 1990.
\newblock ISBN 0-387-97167-X.
\newblock Nonlinear monotone operators, Translated from the German by the
  author and Leo F. Boron.

\bibitem[Chidume(2009)]{MR2504478}
Charles Chidume.
\newblock \emph{Geometric properties of {B}anach spaces and nonlinear
  iterations}, volume 1965 of \emph{Lecture Notes in Mathematics}.
\newblock Springer-Verlag London, Ltd., London, 2009.
\newblock ISBN 978-1-84882-189-7.

\bibitem[Mahmudov(2003)]{MR2046377}
Nazim~I. Mahmudov.
\newblock Approximate controllability of semilinear deterministic and
  stochastic evolution equations in abstract spaces.
\newblock \emph{SIAM J. Control Optim.}, 42\penalty0 (5):\penalty0 1604--1622,
  2003.
\newblock ISSN 0363-0129.

\bibitem[Asplund(1967)]{MR222610}
Edgar Asplund.
\newblock Averaged norms.
\newblock \emph{Israel J. Math.}, 5:\penalty0 227--233, 1967.
\newblock ISSN 0021-2172.

\bibitem[Pinaud and Henr\'{\i}quez(2020)]{MR4104454}
Matthieu~F. Pinaud and Hern\'{a}n~R. Henr\'{\i}quez.
\newblock Controllability of systems with a general nonlocal condition.
\newblock \emph{J. Differential Equations}, 269\penalty0 (6):\penalty0
  4609--4642, 2020.
\newblock ISSN 0022-0396.

\bibitem[Arora et~al.(2022)Arora, Mohan, and Dabas]{MR4336468}
S.~Arora, Manil~T. Mohan, and J.~Dabas.
\newblock Existence and approximate controllability of non-autonomous
  functional impulsive evolution inclusions in {B}anach spaces.
\newblock \emph{J. Differential Equations}, 307:\penalty0 83--113, 2022.
\newblock ISSN 0022-0396.

\bibitem[Bartle(1995)]{bartle2014elements}
Robert~G. Bartle.
\newblock The elements of integration and {L}ebesgue measure.
\newblock Wiley Classics Library, pages xii+179. John Wiley \& Sons, Inc., New
  York, 1995.
\newblock ISBN 0-471-04222-6.
\newblock Containing a corrected reprint of the 1966 original [{{\i}t The
  elements of integration}, Wiley, New York; MR0200398 (34 \#293)], A
  Wiley-Interscience Publication.

\bibitem[Pisier(1975)]{MR394135}
Gilles Pisier.
\newblock Martingales with values in uniformly convex spaces.
\newblock \emph{Israel J. Math.}, 20\penalty0 (3-4):\penalty0 326--350, 1975.
\newblock ISSN 0021-2172.

\bibitem[Day(1941)]{MR3446}
Mahlon~M. Day.
\newblock Reflexive {B}anach spaces not isomorphic to uniformly convex spaces.
\newblock \emph{Bull. Amer. Math. Soc.}, 47:\penalty0 313--317, 1941.
\newblock ISSN 0002-9904.

\bibitem[Alber and Ryazantseva(2006)]{MR2213033}
Yakov Alber and Irina Ryazantseva.
\newblock \emph{Nonlinear ill-posed problems of monotone type}.
\newblock Springer, Dordrecht, 2006.
\newblock ISBN 978-1-4020-4395-6; 1-4020-4395-3.

\bibitem[Dvoretzky(1964)]{dvoretzky1964some}
Aryeh Dvoretzky.
\newblock Some results on convex bodies and banach spaces.
\newblock \emph{Matematika}, 8\penalty0 (1):\penalty0 73--102, 1964.

\bibitem[Deville et~al.(1993)Deville, Godefroy, and Zizler]{MR1211634}
Robert Deville, Gilles Godefroy, and V\'{a}clav Zizler.
\newblock \emph{Smoothness and renormings in {B}anach spaces}, volume~64 of
  \emph{Pitman Monographs and Surveys in Pure and Applied Mathematics}.
\newblock Longman Scientific \& Technical, Harlow; copublished in the United
  States with John Wiley \& Sons, Inc., New York, 1993.
\newblock ISBN 0-582-07250-6.

\end{thebibliography}






\end{document}